\theoremstyle{plain}
\newtheorem{teor}{Theorem}[section]
\newtheorem{lema}[teor]{Lemma}
\newtheorem{coro}[teor]{Corollary}
\newtheorem{prop}[teor]{Proposition}
\theoremstyle{definition}
\newtheorem{defi}{Definition}[section]
\newtheorem{eje}{Example}[section]
\newtheorem{nota}[teor]{Remark}
\newtheorem*{gracias}{Acknowledgments}
\newtheoremstyle{teoremacita}
{3pt}
{3pt}
{\itshape}
{}
{\bfseries}
{}
{ }
{\thmname{#1}\thmnumber{ #2'}\thmnote{ #3}.}
\theoremstyle{teoremacita} \newtheorem*{teor*}{}
\newcommand{\be}{\begin{enumerate}}
\newcommand{\ee}{\end{enumerate}}
\newcommand{\bi}{\begin{itemize}}
\newcommand{\ei}{\end{itemize}}
\def\N{\mathbb N}
\def\Z{\mathbb Z}
\def\R{\mathbb R}
\def\C{\mathbb C}
\def\a{\alpha}
\def\b{\beta}
\def\xx{{\boldsymbol x}}
\def\yy{{\boldsymbol y}}
\def\YY{{\boldsymbol Y}}
\def\II{{\boldsymbol I}}
\def\xxi{{\boldsymbol \xi}}
\def\aa{{\boldsymbol \alpha}}
\def\bb{{\boldsymbol \beta}}
\def\gg{{\boldsymbol \gamma}}
\def\ee{{\boldsymbol \varepsilon}}
\def\00{{\boldsymbol 0}}
\def\11{{\boldsymbol 1}}
\def\d{\partial}
\begin{document}

\title[Summability in a monomial for some singularly perturbed PDEs]{Summability in a monomial for some classes of singularly perturbed partial differential equations}

\author{Sergio A. Carrillo}
\address{Sergio A. Carrillo: 
	Fakult\"at f\"ur Mathematik, Universit\"at Wien, 
	Oskar-Morgenstern-Platz~1, A-1090 Wien, Austria.
	Escuela de Ciencias Exactas e Ingenier\'{i}a, Universidad Sergio Arboleda, Calle 74, $\#$ 14-14, Bogot\'{a} ,Colombia.}

\email{sergio.carrillo@univie.ac.at, sergio.carrillo@usa.edu.co}

\thanks{Supported by the Austrian FWF-Project P 26735-N25. Partially supported by the Ministerio de Econom\'{\i}a y Competitividad from Spain, under the Project ``\'{A}lgebra y Geometr\'{\i}a en Din\'{a}mica Real y Compleja III" (Ref.: MTM2013-46337-C2-1-P)}


\subjclass[2010]{Primary 34M30, Secondary 34E05}
\date{\today}

\begin{abstract}The aim of this paper is to continue the study of asymptotic expansions and summability in a monomial in any number of variables, as introduced in \cite{Monomial summ,Sum wrt germs}. In particular, we characterize these expansions in terms of bounded derivatives and we develop Tauberian theorems for the summability processes involved. Furthermore, we develop and apply the Borel--Laplace analysis in this framework to prove the monomial summability of solutions of a specific class of singularly perturbed PDEs.	
\end{abstract}

\maketitle

\section{Introduction}\label{Introduction}

The theory of asymptotic expansions in one complex variable is a well established and widely used branch of Analysis. It provides the adequate setting to treat solutions of analytic problems at singular points and opens naturally a way to study (Borel) summability of formal solutions, for instance, power series or exponential series. It finds its applications in some classes of ordinary and partial differential equations, analytic classification of formal objects and some other classes of functional equations. In this framework, we have at our disposal two main tools to approach such problems: the Ramis--Sibuya theorem and the Borel--Laplace analysis.

There are different notions of asymptotic expansions in several variables available in the literature. We can mention Majima's strong asymptotic expansions which allows to work with each variable independently, although the problem of identifying singular directions for summability persists, see e.g., \cite{Sanz}. In the Gevrey case,  this notion can also be approached through a Borel--Laplace analysis. Let us mention that in \cite{Costin1} this has been used as a regularization process to prove the existence and ``summability" of solutions for classes of nonlinear evolution partial differential equations.

Our goal in this paper is to follow the sketch we present in Section \ref{Asymptotic and summability in one variable} on the theory of asymptotic expansions in one variable to provide analogous results for the monomial case. Monomial asymptotic expansions lies in between the theory of one variable and the one of Majima. They were introduced and developed systematically in \cite{Monomial summ} for the case of two variables, and then extended in \cite{Sum wrt germs} for any number of them. These expansions are useful to treat, for instance, formal solutions of doubly singular equations where the monomial involved helps us to identify the possible singular directions for summability.

The main theoretical results we obtain here are: a characterization of having a monomial asymptotic expansion in terms of bounded derivatives (Theorem \ref{Bounded derivatives}), equivalent methods to establish monomial summability based on integral transformations (Theorem \ref{Monomial summability t Borel Laplace}) and Tauberian theorems comparing such summability processes (Theorem \ref{tauberian general case}). 

For the two-dimensional case the last two issues were treated in \cite{CM2,CM}. The key idea was to weigh the variables adequately to obtain integral transformations, as the ones introduced in \cite{Balser3}, and then to have at hand a direct Borel--Laplace analysis. One of the improvements we obtain here is that some of the weights can be zero. At a first look, it might seem that the zero weighted variables act as regular parameters. The main difference with the parametric case is that the domains where the asymptotic expansions take place also depend on them. In fact, the summability methods involved are incompatible (Theorem \ref{tauberian for one variable and monomials}). 


As an application, we establish in Theorem \ref{Main result sum mon for PDE} the $\boldsymbol{x}^{\boldsymbol{\a}}\boldsymbol{\varepsilon}^{\boldsymbol{\a'}}$--$1$--summability of the unique formal power series solution of the partial differential equation \begin{equation}\label{PDEG}
\boldsymbol{x}^{\boldsymbol{\a}}\boldsymbol{\varepsilon}^{\boldsymbol{\a'}}\left( \mu_1 x_1 \frac{\d \boldsymbol{y}}{\d x_1}+\cdots+ \mu_n x_n \frac{\d \boldsymbol{y}}{\d x_n}\right)=\boldsymbol{G}(\boldsymbol{x},\boldsymbol{\varepsilon},\boldsymbol{y}),
\end{equation} where $\boldsymbol{x}=(x_1,\dots,x_n), \boldsymbol{\varepsilon}=(\varepsilon_1,\dots,\varepsilon_m)$  are complex variables, $\boldsymbol{\alpha}=(\alpha_1,\dots,\alpha_n)$,  $\boldsymbol{\a'}=(\a_1',\dots,\a_m')$ are tuples of positive integers, $(\mu_1,\dots,\mu_n)$ is, up to a non-zero multiple scalar, a $n$-tuple of positive real numbers,   $\boldsymbol{G}$ is a $\C^N$--valued holomorphic function in a neighborhood of the origin, and $\frac{\partial \boldsymbol{G}}{\partial \yy}(0,0,\boldsymbol{0})$ is an invertible matrix. In this way, we have generalized the results in \cite{Monomial summ,CM2} corresponding to the case $n=m=1$ by using directly the appropriate Borel--Laplace analysis.

Asymptotic expansions and summability have been recently generalized by J. Mozo and R. Sch\"{a}fke in \cite{Sum wrt germs} from monomials to germs of analytic functions. Integral transformations for the corresponding summability methods are still not available and it is an interesting problem to determine whether it is possible to extend our results to that setting. It is worth mentioning that after our current results, we have recently extended these Tauberian theorems for $k$--summability with respect to analytic germs, see \cite{CMS}.

The plan for the paper is as follows: in Section \ref{Asymptotic and summability in one variable} and \ref{Asymptotic expansions in a monomial} we recall the basic results on asymptotic expansions and summability in one variable and for monomials, respectively. Section \ref{Borel-Laplace analysis for monomial summability} is devoted to introduce and develop integral transformations to characterize monomial summability, and then in Section \ref{Tauberian properties for monomial summability} these tools are applied to prove Tauberian theorems for these summability methods. Finally, Section \ref{Monomial summability of a family of singular perturbed PDEs} contains the proof of the monomial summability of the formal solution of the singularly perturbed partial differential equation mentioned above.

\begin{gracias}I want to thank professors Jorge Mozo--Fern\'{a}ndez, Reinhard Sch\"{a}fke and Armin Rainer for fruitful discussions. I also want to thank Universidad de Valladolid (Spain) for the hospitality during my visits while preparing this article. 
\end{gracias}

\section{Asymptotics and summability in one variable}\label{Asymptotic and summability in one variable}

We start by introducing some notation: Let $\N$ denote the set of natural numbers including $0$ and $\N^+=\N\setminus\{0\}$. Domains in the complex plane $\C$ where holomorphic maps admit an asymptotic expansion are sectors with vertex at some fixed point, e.g., the origin. In this paper we denote them by $S=S(\theta,b-a,r)=V(a,b,r)=\{x\in\C \,|\, 0<|x|<r, a<\text{arg}(x)<b\}$ emphasizing on its \textit{bisecting direction} $\theta=(b+a)/2$, \textit{opening} $b-a>0$, and \textit{radius} $r>0$. For unbounded sectors we simply write $S=S(\theta,b-a)$. For subsectors $S'=S(\theta',b'-a',r')$, $a<a'<b'<b$, $0<r'<r$, we write $S'\Subset S$. We also denote by $D_r$ the disc of radius $r$ centered at the origin.

Let $(E,\|\cdot\|)$ be a complex Banach space. In most applications $E$ is $\C^d$, for some $d\geq1$, or a suitable space of functions. We will use the notation $\mathcal{C}(U, E)$ (resp. $\mathcal{O}(U, E)$, $\mathcal{O}_b(U, E)$) for the space of continuous (resp. holomorphic, holomorphic and bounded) $E$--valued maps defined on an open set $U\subseteq \C^d$. If $E=\C$, we will simply write $\mathcal{O}(U)$. We also denote by $E[[x]]$ (resp. $E\{x\}$) the space of formal (resp. convergent) power series in the variable $x$ with coefficients in $E$.

Consider $f\in\mathcal{O}(S, E)$ and assume it has  $\hat{f}=\sum_{n=0}^{\infty} a_n x^n\in E[[x]]$ as \textit{asymptotic expansion} at the origin on $S$ (denoted by $f\sim \hat{f}$ on $S$), i.e.,  for each $S'\Subset S$   and $N\in\N$, there exists $C_N(S')>0$ such that \begin{equation}\label{def asym classic}
\left\|f(x)-\sum_{n=0}^{N-1} a_n x^n\right\|\leq C_N(S')|x|^N,\quad \text{ on } S'.
\end{equation}

To check that $f\sim\hat{f}$ on $S$, it is actually sufficient to have inequalities (\ref{def asym classic}) only for the values $N=Mp$, where $p\in \N^+$ is fixed. The asymptotic expansion also holds if instead of the partial sums of $\hat{f}$ we consider a sequence $(f_N)_{N\in\N}\subset\mathcal{O}_b(D_R, E)$ satisfying that for each $S'\Subset S$ and $N\in\N$, there are constants $A_N(S')>0$ such that \begin{equation*}\label{def asym classic with sequence of function}\|f(x)-f_N(x)\|\leq A_N(S')|x|^N,\quad\text{ on }S'\cap D_R.\end{equation*}


The series $\hat{f}$ is completely determined by $f$ since $a_n=\lim_{S'\ni x\rightarrow 0} \frac{f^{(n)}(x)}{n!}$, for any $S'\Subset S$. The series $\hat{f}$ is also given by the limit of the Taylor series at the origin of the $f_n$, in the $\mathfrak{m}$--topology of $E[[x]]$, $\mathfrak{m}=(x)$. On the other hand, a map has an asymptotic expansion on $S$ if and only if it has bounded derivatives of all orders on every $S'\Subset S$, a fact that follows by using Taylor's formula.

When no restrictions on the coefficients $C_N(S')$ or on the sector $S$ are imposed, the map $f\mapsto \hat{f}$ is not injective. In applications to differential equations the types of asymptotic expansions that appear are of $s$--Gevrey kind  (denoted by $f\sim_{s} \hat{f}$ on $S$), for some $s>0$. This means that we can choose $C_n(S')=C(S')A(S')^n n!^s$, for some $C(S'), A(S')>0$ independent of $n$.  It follows from (\ref{def asym classic}) that $\|a_n\|\leq C_n(S')$ for all $n\in\N$. Then, in the $s$--Gevrey case, we conclude that $\hat{f}$ is a $s$--\textit{Gevrey series}. The space of such series will be denote by $E[[x]]_s$. The cornerstone to define $k$--summability is Watson's lemma: if $f\sim_s 0$ on $S(\theta,b-a,r)$ and $b-a> s\pi$, then $f\equiv 0$.

Given $\hat{f}\in E[[x]]$, $k>0$ and a direction $\theta\in\R$, we say that: \begin{enumerate} \item The series $\hat{f}$ is \textit{$k$--summable on $S=S(\theta,b-a,r)$ with sum $f\in\mathcal{O}(S,E)$} if $b-a>\pi/k$ and $f\sim_{1/k} \hat{f}$ on $S$. We also say that $\hat{f}$ is \textit{$k$--summable in the direction $\theta$}. The corresponding space of such series is denoted by $E\{x\}_{1/k,\theta}$.
	
\item The series $\hat{f}$ is \textit{$k$--summable} if it is $k$--summable in all directions, up to a finite number of them mod. $2\pi$ (the singular directions). The corresponding space is denoted by  $E\{x\}_{1/k}$.\end{enumerate}
	
Due to Watson's lemma, the $k$--sum of a $k$--summable series is unique. We have at our disposal integral transformations to compute these sums. Among the kernels of order $k$ for moment summability, see e.g., \cite[Sec. 6.5]{Balser2}, it is common to consider:

\begin{enumerate}\item \textit{The $k$--Borel transform}, defined by $\mathcal{B}_kf(\xi)=\frac{k}{2\pi i}\int_{\gamma} f(x)e^{(\xi/x)^k} \frac{dx}{x^{k+1}},$ where $f\in\mathcal{O}_b(S,E)$, $S=S(\theta,\pi/k+2\epsilon, R_0)$, $0<2\epsilon<\pi/k$, and $\gamma$ is the boundary, oriented positively, of a subsector of $S$ of opening larger than $\pi/k$. Its formal counterpart, $\widehat{\mathcal{B}}_k$, acts on monomials by the formula  $\widehat{\mathcal{B}}_k(x^\lambda)(\xi)=\frac{\xi^{\lambda-k}}{\Gamma(\lambda/k)}$, $\lambda\in\C$.
	
\item \textit{The $k$--Laplace transform in direction $\theta$}, defined by $\mathcal{L}_{k,\theta}(g)(x)=\int_0^{e^{i\theta}\infty} g(\xi)e^{-(\xi/x)^k}d\xi^k,$ where $g$ is continuous and has exponential growth of order at most $k$ on the domain of integration. If $g$ is defined on an unbounded sector, we obtain a map $\mathcal{L}_{k}(g)$, through analytic continuation by moving $\theta$.\end{enumerate}

Using these transformations, and due to their compatibility with asymptotic expansions, a $1/k$--Gevrey series $\hat{f}=\sum_{n=0}^\infty a_n x^n$ is called $k$--\textit{Borel summable in direction $\theta$} if $\widehat{\mathcal{B}}_k\left(\hat{f}-\sum_{n\leq k} a_n x_n\right)$ can be analytically continued, say as $\varphi$,  to an unbounded sector $S'$ containing $\theta$, and having exponential growth of order at most $k$ in $S'$, i.e., we can find constants $L,M>0$ such that $\|\varphi(\xi)\|\leq Le^{M|\xi|^k}$, for all $\xi\in S'$. The \textit{$k$--Borel sum} of $\hat{f}$ is defined by $f(x)=\sum_{n\leq k} a_nx^n+\mathcal{L}_{k}(\varphi)(x)$. It is well--known that a power series $\hat{f}\in E[[x]]$ is $k$--Borel summable in a direction $\theta$ if and only if it is $k$--summable in the direction $\theta$, and both sums coincide, see e.g., \cite{Ramis1}. This equivalence is useful also to prove Tauberian theorems for $k$--summability. In particular, we know that if a series is $k$--summable for two different values of the parameter $k$, then it is convergent.

The Borel--Laplace analysis has been applied as a regularization process in differential equations to prove the summability of formal solutions in generic situations. It exploits the isomorphism between the following structures \begin{equation}\label{Iso structres 1 variable}\left(E[[x]]_{1/k},+,\,\times\, ,x^{k+1}d/dx\right)\xrightarrow{\widehat{\mathcal{B}}_k} \left(\xi^{-k}E\{\xi\},+,\,\ast_k\, ,k\xi^k(\cdot)\right),
\end{equation} where $\times$ denotes the usual product and $\ast_k$ stands for the $k$--convolution product. For holomorphic maps, it is given by $(f\ast_k g)(\xi)=\xi^k \int_{0}^{1} f(\xi\tau^{1/k}), g(\xi(1-\tau)^{1/k})d\tau$. For more morphisms of this nature, see e.g., \cite{Mar-R}.

\section{Asymptotic expansions in a monomial}\label{Asymptotic expansions in a monomial}

In this section we recall the concepts of asymptotic expansions and $k$--summability in a monomial, and their main properties. In particular, we prove Theorem \ref{Bounded derivatives} that characterizes maps admitting a monomial asymptotic expansion in terms of bounds on their derivatives.

We introduce the remaining notation we will use along the text. For a fixed $d\in \N^+$, we will write $[1,d]$ for the set $\{1,2,\dots,d\}$,  $\boldsymbol{e}_1,\dots,\boldsymbol{e}_d$ will denote the canonical basis of $\C^d$, $\sigma_d=\{(t_1,\dots,t_d)\in\R_{>0}^{d} \,|\, t_1+\cdots+t_d=1\}$ will be the standard $d$--simplex they generate, and $\overline{\sigma_d}$ will denote its topological closure. We will also write $\left<\boldsymbol{\lambda},\boldsymbol{\mu}\right>=\lambda_1\mu_1+\cdots+\lambda_d\mu_d$, for all $\boldsymbol{\lambda}, \boldsymbol{\mu}\in \C^d$. 

We use complex coordinates $\boldsymbol{x}=(x_1,\dots,x_d)\in\C^d$. If  $\boldsymbol{\beta}=(\beta_1,\dots,\beta_d)\in\N^d$ and $\boldsymbol{s}=(s_1,\dots,s_d)\in \R^d_{\geq0}$, we use the multi-index notation $|\boldsymbol{\beta}|=\beta_1+\cdots+\beta_d$, $\boldsymbol{\beta}!^{\boldsymbol{s}}=\beta_1!^{s_1}\cdots\beta_d!^{s_d}$, $\boldsymbol{x}^{\boldsymbol{\beta}}=x_1^{\beta_1}\cdots x_d^{\beta_d}$, and $\frac{\d^{\boldsymbol{\beta}}}{\d \boldsymbol{x}^{\boldsymbol{\beta}}}={\d^{|\boldsymbol{\beta}|}}/{\d x_1^{\beta_1}\cdots \d x_d^{\beta_d}}$. If $J\subseteq [1,d]$, we denote by $J^c=\{i\in[1,d] \,|\, i\not\in J\}$ its complement, $\# J$ its cardinal, and we write $\N^J=\{(\beta_j)_{j\in J} \,|\, \beta_j\in\N, j\in J\}$, $\xx_J=(x_j)_{j\in J}$, $\bb_J=(\beta_j)_{j\in J}$, and  $\xx_J^{\bb_J}=\prod_{j\in J} x_j^{\beta_j}$. Along the text we work with the partial order on $\N^d$ defined by $\bb\leq \aa$ if and only if $\beta_j\leq \a_j$, for all $j\in [1,d]$. We will also write $\bb<\aa$ if $\beta_j<\a_j$, for all $j\in[1,d]$. Note that $\bb\not\leq \aa$ if and only if there is $j\in [1,d]$ such that $\beta_j>\a_j$.

Given a complex Banach space $(E,\|\cdot\|)$, $E[[\xx]]$ (resp. $E\{\xx\}$) will denote the space of formal (resp. convergent) power series in the variables $\xx$ with coefficients in $E$. If $\boldsymbol{s}\in \R_{\geq0}^d$, we denote by $E[[\boldsymbol{x}]]_{\boldsymbol{s}}$ the space of $\boldsymbol{s}$--Gevrey series in the variable $\xx$, i.e., $\sum_{\bb\in\N^d} a_{\boldsymbol{\beta}} \boldsymbol{x}^{\boldsymbol{\beta}}$ is $\boldsymbol{s}$--Gevrey if there exist constants $C,A>0$ such that $\|a_{\boldsymbol{\beta}}\|\leq CA^{|\boldsymbol{\beta}|}\boldsymbol{\beta}!^{\boldsymbol{s}}$, for all $\boldsymbol{\beta}\in\N^d$.

Given any $\hat{f}=\sum_{\bb\in\N^d} a_\bb \xx^\bb\in E[[\xx]]$, we can write it uniquely for every nonempty subset $J\subsetneq [1,d]$ as \begin{equation}\label{Decomposition f J}
\hat{f}=\sum_{\bb_J\in\N^J} \hat{f}_{J,\bb_J}(\xx_{J^c})\xx_J^{\bb_J},\quad \hat{f}_{J,\bb_J}(\xx_{J^c})=\sum_{\bb_{J^c}\in\N^{J^c}} a_{\bb_{J\cup J^c}} \xx_{J^c}^{\bb_{J^c}}.
\end{equation} Furthermore, if $\boldsymbol{\a}\in(\N^+)^{d}$ is given and we consider the monomial $\boldsymbol{x}^{\mathbf{\a}}$, $\hat{f}$ can be also written uniquely as \begin{equation}\label{para definir Tpq}
\hat{f}=\sum_{n=0}^\infty \hat{f}_{\aa,n}(\boldsymbol{x})\boldsymbol{x}^{n\boldsymbol{\a}},\quad \hat{f}_{\aa,n}=\sum_{\aa\not\leq\bb}a_{n\boldsymbol{\a}+\boldsymbol{\beta}}\boldsymbol{x}^{\boldsymbol{\beta}}.
\end{equation}

To ensure that each $\hat{f}_{\aa,n}=f_{\aa,n}$ gives rise to a holomorphic map, defined in a common polydisc at the origin for all $n\in\N$, it is necessary and sufficient that $\hat{f}\in \hat{\mathcal{O}}'_d(E):=\bigcup_{r>0} \hat{\mathcal{O}}'_d(r,E)$, where $\hat{\mathcal{O}}'_d(r,E):=\bigcap_{j=1}^d \mathcal{O}_b(D_r^{d-1},E)[[x_j]]$. If this is the case,  then $\hat{f}_{J,\bb_J}=f_{J,\bb_J}\in E\{\xx_{J^c}\}$, for all $\bb_J\in \N^{J}$ and $J\subsetneq [1,d]$, and they are defined in a common polydisc at the origin. Besides $f_{\aa,n}\in \mathcal{E}^{\boldsymbol{\a}}:=\bigcup_{r>0}\mathcal{E}^{\boldsymbol{\a}}_r$, where $\mathcal{E}^{\boldsymbol{\a}}_r$ is the space of holomorphic maps $g\in\mathcal{O}_b(D_r^d,E)$ such that $\frac{\d^{\bb}g}{\d x^{\bb}}(\boldsymbol{0})=0$, for all $\aa\not\leq\bb$. Also each $\mathcal{E}^{\boldsymbol{\a}}_r$ becomes a Banach space with the norm $\|g\|_r:=\sup_{|x_1|,\dots,|x_d|\leq r} \|g(\boldsymbol{x})\|$.

For any $\hat{f}\in\hat{\mathcal{O}}'_d(E)$ and $\gg\in\N^d$, we will use the notation $$\text{App}_\gg (\hat{f})(\xx)=\sum_{\boldsymbol{
	\gamma}\not\leq \bb}a_{\bb}\boldsymbol{x}^{\bb}\in E\{\xx\},$$ for the \textit{formal approximate of $\hat{f}$ of order $\boldsymbol{\gamma}$}. In particular,  if $\gg=N\aa$, we have \begin{equation}\label{App Na}
\text{App}_{N\aa} (\hat{f})(\xx)=\sum_{n=0}^{N-1} f_{\aa,n}(\xx) \xx^{n\aa}.
\end{equation}


\begin{nota}\label{Bounds factorial min max}For further use, we remark the following bounds on the factorial. First, it is elementary to show that \begin{equation}\label{Eq bounds factorial} n!^k\leq (kn)!\leq k^{kn} n!^k,\quad n,k\in\N^+.	
	\end{equation} Let us fix $\aa\in (\N^+)^d$ and consider $\gg\in\N^d\setminus\{\00\}$. If $N=\min_{1\leq j\leq d}\lfloor \gamma_j/\a_j\rfloor=\lfloor \gamma_l/\a_l\rfloor$, where $\lfloor\cdot\rfloor$ denotes the floor function, then $\gamma_l/\a_l\leq N \leq \gamma_j/\a_j$, for all $j=1,\dots,d$. Then, using (\ref{Eq bounds factorial}), we see that $N!^{\a_j}\leq (\a_j N)!\leq \gamma_j!$, and 
	$\gamma_l!\leq (\a_lN)!\leq \a_l^{\a_l N} N!^{\a_l}$. Since $N\leq |\gg|$, we can conclude that 
	$$|\aa|^{-|\gg|} \min_{1\leq j\leq d} \gamma_j!^{1/\a_j}\leq N!\leq \min_{1\leq j\leq d} \gamma_j!^{1/\a_j}.$$ Analogously, if we consider $N=\max_{1\leq j\leq d}\lfloor\gamma_j/\a_j\rfloor+1=\lfloor\gamma_m/\a_m\rfloor+1$ instead, now we have $\gamma_j/\a_j<N\leq \gamma_m/\a_m+1$, for all $j=1,\dots,d$. Then $N!^{\a_m}\leq (\a_m N)!\leq (\gamma_m+\a_m)!\leq 2^{\gamma_m+\a_m} \gamma_m!\a_m!$. Using that $N\leq 2|\gg|$, and $\a_m!^{1/\a_m}\leq \a_m\leq |\aa|$, we can conclude as before that $$|\aa|^{-2|\gg|} \max_{1\leq j\leq d} \gamma_j!^{1/\a_j}\leq N!\leq |\aa|2^{2|\gg|} \max_{1\leq j\leq d} \gamma_j!^{1/\a_j}.$$


\end{nota}

For each $\aa\in(\N^+)^d$, we consider the map $\hat{T}_{\aa}:\hat{\mathcal{O}}'_d(E)\rightarrow \mathcal{E}^{\boldsymbol{\a}}[[t]]$ given by $\hat{T}_{\aa}(\hat{f})=\sum_{n=0}^\infty f_{\aa,n} t^n$, by using decomposition (\ref{para definir Tpq}). We will say $\hat{f}$ is a \textit{$s$--Gevrey series in the monomial $\boldsymbol{x}^{\boldsymbol{\a}}$} if for some $r>0$, $\hat{T}_{\aa}(\hat{f})\in\mathcal{E}_r^{\boldsymbol{\a}}[[t]]$, and it is a $s$--Gevrey series in $t$, i.e., there are constants $B,D>0$ such that $\|f_{\aa,n}\|_r\leq DB^n n!^s$, for all $n\in\N$. The space of $s$--Gevrey series in $\boldsymbol{x}^{\boldsymbol{\a}}$ will be denoted by $E[[\boldsymbol{x}]]_{s}^{\boldsymbol{\a}}$. Their elements admit another characterization, for which we need

\begin{lema}\label{Bounds for formal gevrey series}The following assertions are verified for a series $\hat{f}=\sum a_{\boldsymbol{\beta}}\boldsymbol{x}^{\boldsymbol{\beta}}\in E[[\xx]]$:\begin{enumerate}
\item  $\hat{f}\in E[[\boldsymbol{x}]]_{s}^{\boldsymbol{\a}}$ if and only if there are constants $C,A>0$ satisfying $$\|a_{\boldsymbol{\beta}}\|\leq CA^{|\boldsymbol{\beta}|}\min\left\{\beta_1!^{s/\a_1},\dots,\beta_d!^{s/\a_d}\right\},\quad \boldsymbol{\beta}\in\N^d.$$ 

\item If $\hat{f}\in E[[\boldsymbol{x}]]_{s}^{\boldsymbol{\a}'}$, then $\hat{T}_{\aa}(\hat{f})$ is a $\max_{1\leq j\leq d}\{\a_j/\a_j'\}s$--Gevrey series, in some $\mathcal{E}^{\boldsymbol{\a}}_r$.
\end{enumerate}
\end{lema}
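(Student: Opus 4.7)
The plan is to reduce both statements to careful factorial estimates combined with Cauchy's inequalities and a splitting of the index set $\{\bb : \aa\not\leq\bb\}$. Part (2) will follow from an obvious adaptation of the converse direction of (1), so the work concentrates on (1).

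\textbf{Forward direction of (1).} I would assume $\|f_{\aa,n}\|_r \leq D B^n n!^{s}$ and use that every $\gg \in \N^d$ admits a unique decomposition $\gg = n\aa + \bb$ with $\aa\not\leq \bb$, obtained by $n=\min_{j}\lfloor \gamma_j/\a_j\rfloor$: indeed $\bb\geq \00$ forces $n \leq \min_j\lfloor\gamma_j/\a_j\rfloor$, while $\aa\not\leq\bb$ forces the reverse inequality. Then $a_\gg$ is the $\xx^\bb$-coefficient of the Taylor expansion of the bounded holomorphic map $f_{\aa,n}$, so Cauchy's inequalities on $D_r^d$ give $\|a_\gg\|\leq \|f_{\aa,n}\|_r/r^{|\bb|}$. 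Using $|\bb|=|\gg|-n|\aa|$ to absorb the $r$-factor into a term of the form $A^{|\gg|}$, and replacing $n!^{s}$ by $\min_{j}\gamma_j!^{s/\a_j}$ via the first inequality of Remark \ref{Bounds factorial min max}, yields the desired coefficient bound.

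\textbf{Converse of (1) and proof of (2).} Both amount to estimating $\|f_{\aa,n}\|_r$ starting from $\|a_\bb\| \leq CA^{|\bb|}\min_j \beta_j!^{s/\a_j''}$, where $\aa''=\aa$ in (1) and $\aa''=\aa'$ in (2). The key observation is
\[
\{\bb\in\N^d : \aa\not\leq \bb\} \;\subseteq\; \bigcup_{j=1}^{d}\{\bb\in\N^d : \beta_j<\a_j\},
\]
so $\|f_{\aa,n}\|_r\leq \sum_{j=1}^{d}\sum_{\beta_j<\a_j}\|a_{n\aa+\bb}\|\,r^{|\bb|}$. On the $j$-th slab I use the $j$-th term of the min and apply $(n\a_j+\beta_j)!\leq 2^{n\a_j+\beta_j}(n\a_j)!\,\beta_j!$ together with $(n\a_j)!\leq \a_j^{\a_j n}n!^{\a_j}$ from (\ref{Eq bounds factorial}), obtaining
\[
(n\a_j+\beta_j)!^{s/\a_j''}\leq C_j^{n+|\bb|}\,n!^{(\a_j/\a_j'')s}\,\beta_j!^{s/\a_j''}.
\]
The remaining sum over $\bb$ with $\beta_j<\a_j$ is a finite sum of $\a_j$ terms in the $j$-th coordinate and geometric series in the other coordinates, convergent once $r$ is chosen small enough. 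Summing over $j$ gives $\|f_{\aa,n}\|_r\leq D'B'^{n}n!^{\max_j(\a_j/\a_j'')s}$, which for $\aa''=\aa$ is the $s$-Gevrey estimate of (1) and for $\aa''=\aa'$ is the $\max_j(\a_j/\a_j')s$-Gevrey estimate of (2).

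\textbf{Main difficulty.} The only delicate point is the coordinate-by-coordinate bookkeeping on the $d$ overlapping slabs $\{\beta_j<\a_j\}$: because the covering is not disjoint, one is forced to use the $j$-th coordinate of the min on the $j$-th slab rather than a single globally optimal choice. This is precisely what makes the \emph{maximum} (and not a smaller quantity) of the ratios $\a_j/\a_j'$ appear as the Gevrey order in (2). All remaining steps are routine manipulations with the factorial bounds already gathered in Remark \ref{Bounds factorial min max}.
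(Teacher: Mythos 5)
Your proposal is correct and follows essentially the same route as the paper's proof: the forward direction of (1) via the unique decomposition $\gg=N\aa+\bb$, Cauchy's inequalities for $f_{\aa,N}$, and the factorial comparison of Remark \ref{Bounds factorial min max}; and the converse of (1) together with (2) via the covering of $\{\bb:\aa\not\leq\bb\}$ by the slabs $\{\beta_j<\a_j\}$, using the $j$-th term of the minimum on the $j$-th slab, geometric series in the remaining coordinates, and the bounds (\ref{Eq bounds factorial}) to produce the Gevrey order $\max_j(\a_j/\a_j')s$. Your factorial estimate $(n\a_j+\beta_j)!\leq 2^{n\a_j+\beta_j}(n\a_j)!\,\beta_j!$ is only a cosmetic variant of the paper's bound $(n\a_j+\beta_j)!\leq(\a_j(n+1))!$, so there is no substantive difference in approach.
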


\begin{proof}(1) Assume there are constants $B,D>0$ such that $\|f_{\aa,n}\|_r\leq DB^nn!^s$, for all $n\in\N$. Given $\boldsymbol{\gamma}\in\N^d$, let $N=\min_{1\leq j\leq d}\lfloor \gamma_j/\a_j\rfloor$. Thus $\gg=N\aa+\bb$ with $\beta_l<\a_l$, for some $l$. Then, by Cauchy's inequalities, we see that $$\|a_{\gg}\|=\|a_{N\aa+\bb}\|=\left\|\frac{1}{\bb!}\frac{\d^{\bb}f_{\aa,N}}{\d \xx^{\bb}}(\boldsymbol{0})\right\|\leq\frac{DB^N}{r^{|\boldsymbol{\beta}|}}N!^s,$$ what yields one implication, with the aid of Remark \ref{Bounds factorial min max}. The converse follows by the same argument as in (2) below.

(2) If $\|a_{\boldsymbol{\beta}}\|\leq CA^{|\boldsymbol{\beta}|}\min_{1\leq j\leq d}\{\beta_j!^{s/\a_j'}\}$, for all $\boldsymbol{\beta}\in\N^d$, we can directly estimate the growth of the $f_{\aa,n}$ by means of equation (\ref{para definir Tpq}): if $|\xx|<r$,  and $rA<1$ we obtain\begin{align*}
\|f_{\aa,n}(\boldsymbol{x})\|=\left\|\sum_{\aa\not\leq\gg} a_{n\aa+\gg} \xx^\gg\right\|&\leq \sum_{j=1}^{d}\sum_{\beta_j=0}^{\a_j-1} \sum_{\gg\in \N^d, \gamma_j=\b_j} CA^{n|\aa|+|\gg|} r^{|\gg|} \min_{1\leq l\leq d}(n\a_l+\gamma_l)!^{s/\a_l'}\\
&\leq \frac{CA^{n|\boldsymbol{\a}|}}{(1-rA)^{d-1}}\sum_{j=1}^{d}\sum_{\beta_j=0}^{\a_j-1}   (n\a_j+\beta_j)!^{s/\a_j'}(rA)^{\beta_j}.
\end{align*} If we write $s'=\max_{1\leq j\leq d} \{\a_j /\a_j'\} s$, by using inequalities (\ref{Eq bounds factorial}), we find that $$(n\a_j+\beta_j)!^{s/\a_j'}\leq (\a_j(n+1))!^{s/\a_j'}\leq \a_j^{\a_j (n+1)}(n+1)!^{s \a_j /\a_j'}\leq \a_j^{\a_j (n+1)} (n+1)!^{s'},$$ for all $n\in\N$. Then it is clear that we can find constants $K,M>0$ such that $\|f_{\aa,n}(\boldsymbol{x})\|\leq KM^n n!^{s'}$, for all $|\xx|<r$ and all $n\in\N$, as we wanted to show.

\end{proof}

The previous lemma implies that $E[[\xx]]_{Ms}^{M\aa}=E[[\xx]]_s^{\aa}$, for all $M\in\N^+$. It also shows that $\hat{f}\in E\{\boldsymbol{x}\}$ if and only if $\hat{T}_{\aa}(\hat{f})\in \mathcal{E}^{\boldsymbol{\a}}_r\{t\}$, for some $r>0$, by taking $s=0$. Moreover, Lemma \ref{Bounds for formal gevrey series} (1) shows that \begin{equation}\label{segment of line}
E[[\boldsymbol{x}]]_{s}^{\boldsymbol{\a}}=\bigcap_{j=1}^d E[[\boldsymbol{x}]]_{\frac{s}{\a_j}\boldsymbol{e}_j}\subseteq E[[\boldsymbol{x}]]_{\boldsymbol{s}},
\end{equation} for any $\boldsymbol{s}$ in the convex hull of $\left\{s/\a_1\boldsymbol{e}_1,\dots,s/\a_d\boldsymbol{e}_d\right\}$. This inclusion follows from the first inequality in \begin{equation}\label{prop the segment}\min\{a_1,\dots,a_d\}\leq a_1^{t_1}\cdots a_d^{t_d} \leq \max\{a_1,\dots,a_d\},\end{equation} valid for any $a_1,\dots,a_d>0$ and $(t_1,\dots,t_d)\in \overline{\sigma_d}$.

In the analytic setting, we use \textit{sectors in the monomial $\boldsymbol{x}^{\boldsymbol{\a}}$}, i.e., sets of the form $$\Pi_{\boldsymbol{\a}}=\Pi_{\boldsymbol{\a}}(a,b,r)=\left\{\boldsymbol{x}\in \C^d \,|\, a<\text{arg}(\boldsymbol{x}^{\boldsymbol{\a}})<b,\, 0<|x_j|^{\a_j}<r,\, j\in[1,d]\right\}.$$ Here any convenient branch of arg may be used. The number $r>0$ denotes the \textit{radius}, $b-a>0$ the \textit{opening} and $\theta=(b+a)/2$ the \textit{bisecting direction} of the monomial sector. We will also use the notation $\Pi_{\boldsymbol{\a}}(a,b,r)=S_{\boldsymbol{\a}}(\theta,b-a,r)=S_{\boldsymbol{\a}}$. In the case $r=+\infty$ we will simply write $\Pi_{\boldsymbol{\a}}(a,b)=S_{\boldsymbol{\a}}(\theta,b-a)$, and we will refer to it as an  \textit{unbounded sector}. The definition of subsector in a monomial is clear.

\begin{nota}\label{Polysectors} Given two monomial sectors $\Pi_\aa'=\Pi_\aa(a',b',r)\subset \Pi_\aa''=\Pi_\aa(a'',b'',r)$, we can always cover the first one by polysectors, i.e., Cartesian product of sectors, of constant opening contained in the second one. In particular, we can check that $\Pi_\aa'\subset U \subset \Pi_\aa''$, where $U$ is given by
\begin{align*}U=\bigcup_{\mu_1,\dots,\mu_{d-1}\in\R} \prod_{j=1}^{d-1}& V\left(\mu_j,\mu_j+\frac{b'-a'}{(d-1)\a_j},r^{1/\a_j}\right)\\
&\times V\left(\frac{a''-\sum_{k=1}^{d-1} \a_{k}\mu_{k}}{\a_d},\frac{b''-(b'-a')-\sum_{k=1}^{d-1} \a_{k}\mu_{k}}{\a_d},r^{1/\a_d}\right).\end{align*}

Indeed, if $\xx_0=(x_{1,0},\dots, x_{d,0})\in \Pi_\aa'$,  then $\xx_0$ belongs, for instance, to the polysector with $\mu_j$ given by
$$\mu_j=\text{arg}(x_{j,0})-\frac{\phi}{\a_j},\quad \max\left\{0,\frac{\text{arg}(\xx_0^\aa)-b''+b'-a'}{d-1}\right\}<\phi<\min\left\{\frac{b'-a'}{d-1},\frac{\text{arg}(\xx_0^\aa)-a''}{d-1}\right\}.$$
\end{nota}

\

If $f\in\mathcal{O}_b(\Pi_{\aa},E)$, $\Pi_{\aa}=\Pi_{\aa}(a,b,r)$, then we can construct an operator $T_{\boldsymbol{\a}}(f)_\rho:V\rightarrow \mathcal{E}^{\boldsymbol{\a}}_{\rho}$, $V=V(a,b,\rho^d)$ and $0<\rho<r$, as it is done in the formal case, such that \begin{equation*}\label{Property T_a}T_{\boldsymbol{\a}}(f)_\rho(\boldsymbol{x}^{\boldsymbol{\a}})(\boldsymbol{x})=f(\boldsymbol{x}).\end{equation*} We recall this construction by following \cite{Sum wrt germs}. We start with the case $\boldsymbol{\a}=\11:=(1,\dots,1)$. Define the map $g(t,x_2,\dots,x_d):=f\left(\frac{t}{x_2\cdots x_d},x_2,\dots,x_d\right)$ for $|x_2|,\dots,|x_d|<r$ and $|t|/r<|x_2\cdots x_d|$. The map $g$ admits a Laurent series expansion in this domain,  $g(t,x_2,\dots,x_d)=\sum_{\boldsymbol{m}\in\Z^{d-1}} g_{\boldsymbol{m}}(t)\boldsymbol{x}'^{\boldsymbol{m}}$, where  $g_{\boldsymbol{m}}\in\mathcal{O}(V,E)$.

If $\boldsymbol{m}\in\Z^{d-1}$, we use the notation $\mu(\boldsymbol{m})=\min\{0,m_2,\dots,m_d\}\leq0$, and $\phi:\Z^{d-1}\rightarrow \mathcal{M}_d$ for the bijection  $\phi(\boldsymbol{m})=(-\mu(\boldsymbol{m}),m_2-\mu(\boldsymbol{m}),\dots,m_d-\mu(\boldsymbol{m}))$, where $\mathcal{M}_d\subset \N^d$ is the set of all $(n_1,\dots,n_d)\in\N^d$ such that at least one of the $n_j$ vanishes. Then, by definition \begin{equation*}\label{Def T_1,...,1}	 T_{\11}(f)_\rho(t)(\boldsymbol{x}):=\sum_{\boldsymbol{m}\in\Z^d} t^{\mu(\boldsymbol{m})}g_{\boldsymbol{m}}(t) \boldsymbol{x}^{\phi(\boldsymbol{m})}.
\end{equation*} With these considerations, we guarantee that all the exponents in $\xx$ are non-negative. To check that this expression is well-defined and satisfies what it is required, note that since $f$ is bounded, say by some constant $C$, Cauchy's inequalities yield  $\|g_{\boldsymbol{m}}(t)\|\leq C r_2^{-m_2}\cdots r_d^{-m_d}$. If $m_l=\mu(\boldsymbol{m})$, choose $r_j=r$ for all $j\neq l$, and $r_l$ such that $r_2\cdots r_d=|t|/r$, to deduce that $$\|g_{\boldsymbol{m}}(t)\|\leq C |t|^{-\mu(\boldsymbol{m})}r^{d\mu(\boldsymbol{m})-(m_2+\cdots +m_d)}.$$ Thus each
$ t^{\mu(\boldsymbol{m})}g_{\boldsymbol{m}}(t)$ is holomorphic and bounded on $V$. It is also clear that $\boldsymbol{x}^{\phi(\boldsymbol{m})}\in \mathcal{E}^{\11}_{\rho}$, since $\phi(\boldsymbol{m})\in \mathcal{M}_d$, and then the map defined through the previous series also belongs to the same space.

More generally, if there is a function $K:(0,r^d)\rightarrow \R$ such that $\|f(\boldsymbol{x})\|\leq K(|x_1\cdots x_d|)$, $\boldsymbol{x}\in \Pi_{\11}$, then $\|t^{\mu(\boldsymbol{m})}g_{\boldsymbol{m}}(t)\|\leq K(|t|)r^{d\mu(\boldsymbol{m})-(m_2+\cdots +m_d)}$. Thus $T_{\11}(f)_\rho(t)(\boldsymbol{x})$ is bounded by
$$K(|t|) \sum_{\boldsymbol{m}\in \Z^{d-1}} \left(\frac{1}{r}\boldsymbol{x}\right)^{\phi(\boldsymbol{m})}=K(|t|)\sum_{\boldsymbol{n}\in\mathcal{M}_d} \left(\frac{1}{r}\boldsymbol{x}\right)^{\boldsymbol{n}}\leq  \frac{K(|t|)}{\prod_{j=1}^{d}\left(1-\frac{|x_j|}{r}\right)}.$$ We conclude that \begin{equation}\label{Bound for T_1,...,1}
\|T_{\11}(f)_\rho(t)(\boldsymbol{x})\|\leq \frac{K(|t|)}{\prod_{j=1}^{d}\left(1-\frac{|x_j|}{r}\right)},\quad t\in V(a,b,\rho^d),\, \xx\in \Pi_{\11}(a,b,r).
\end{equation}

In the general case, for an arbitrary monomial $\xx^{\aa}$, we can write \begin{equation}\label{Decomposition for ramifications}
f(\xx)=\sum_{\00\leq \bb<\aa} \xx^{\bb} f_{\bb}(x_1^{\a_1},\dots,x_d^{\a_d}),
\end{equation} with $f_{\boldsymbol{\beta}}\in\mathcal{O}(\Pi_{\11}(a,b,r),E)$. In fact, if for each $j$, $\omega_j$ is a $\a_j$-th primitive root of unity, then \begin{equation}
\label{Equation for f_beta} \boldsymbol{x}^{\boldsymbol{\beta}}f_{\boldsymbol{\beta}}(x_1^{\a_1},\dots,x_d^{\a_d})=\frac{1}{\a_1\cdots\a_d}\sum_{\00\leq \boldsymbol{\delta}<\aa} \omega_1^{-\delta_1\beta_1}\cdots\omega_d^{-\delta_d\beta_d}f(\omega_1^{\delta_1}x_1,\dots,\omega_d^{\delta_d}x_d).
\end{equation} Then, we define \begin{equation}\label{Definition T_a}
T_{\boldsymbol{\a}}(f)_\rho(t)(\boldsymbol{x}):=\sum_{\00\leq \bb<\aa} \boldsymbol{x}^{\boldsymbol{\beta}}T_{\11}(f_{\boldsymbol{\beta}})_\rho(t)(x_1^{\a_1},\dots,x_d^{\a_d}).
\end{equation}

To see that $T_{\boldsymbol{\a}}(f)_\rho$ is well-defined and thus holomorphic, we can actually show that if $f$ satisfies $\|f(\boldsymbol{x})\|\leq K(|\boldsymbol{x}^{\boldsymbol{\a}}|)$ on $\Pi_{\boldsymbol{\a}}$, for some function $K:(0,r^d)\rightarrow\R$, then \begin{equation}\label{inq Tpq}
\left\|T_{\boldsymbol{\a}}(f)_\rho(t)\right(\xx)\|\leq R_\aa(|x_1|^{\a_1},\dots,|x_d|^{\a_d},r)\frac{K(|t|)}{|t|}, \quad t\in V(a,b,\rho^d),\, \xx\in \Pi_\aa(a,b,r).
\end{equation} where $$R_\aa(\rho_1,\dots,\rho_d,r):=\frac{r^d}{\prod_{j=1}^d\left(1-\left(\frac{\rho_j}{r}\right)^{1/\a_j}\right)}, \quad 0\leq \rho_1,\dots,\rho_d<r.$$

Indeed, by using equation (\ref{Equation for f_beta}) we get $\|\boldsymbol{x}^{\boldsymbol{\a}}f_{\boldsymbol{\beta}}(x_1^{\a_1},\dots,x_d^{\a_d})\|\leq |x_1|^{\a_1-\beta_1}\cdots|x_d|^{\a_d-\beta_d} K(|\boldsymbol{x}^{\a}|)$. In the variables $u_j=x_j^{\a_j}$ these inequalities take the form \begin{equation*}\label{Inq for f_beta} \|f_{\bb}(\boldsymbol{u})\|\leq r^{1-\beta_1/\a_1}\cdots r^{1-\beta_d/\a_d}\frac{K(|u_1\cdots u_d|)}{|u_1\cdots u_d|}.
\end{equation*} We can thus apply inequality (\ref{Bound for T_1,...,1}) to each summand in (\ref{Definition T_a}) to finally obtain $$\left\|T_{\boldsymbol{\a}}(f)_\rho(t)(\boldsymbol{x})\right\|\leq \sum_{\00\leq \bb<\aa} \left(\frac{|x_1|}{r^{1/\a_1}}\right)^{\beta_1}\cdots \left(\frac{|x_d|}{r^{1/\a_d}}\right)^{\beta_d} \frac{r^dK(|t|)}{\prod_{j=1}^{d}\left(1-\frac{|x_j|^{\a_j}}{r}\right)|t|}.$$ Then (\ref{inq Tpq}) follows by noticing that $|x_j|^{\a_j}<\rho<r$, and also from the identity $\sum_{\00\leq\bb<\aa} a_1^{\beta_1}\cdots a_d^{\beta_d}=\frac{(1-a_1^{\a_1})\cdots (1-a_d^{\a_d})}{(1-a_1)\cdots(1-a_d)}$.

In the case $d=2$ and $\aa=\11=(1,1)$, if $f\in\mathcal{O}(\Pi_{\11}(a,b,r))$, the previous construction consists in writing $f\left(t/x_2,x_2\right)=\sum_{m\in\Z} f_m(t)x_2^m$, as a Laurent series on $|t|/r<|x_2|<r$, and setting  $$T_{\11}(f)_\rho(t)(x_1,x_2)=\sum_{m=0}^\infty \frac{f_{-m}(t)}{t^m}x_1^m+\sum_{m=1}^{\infty} f_m(t)x_2^m.$$

As an application of this construction, we can prove the following proposition on the dependence and growing of a holomorphic map in a monomial. 

\begin{prop}\label{Dependence of only a monomial} Let $d\geq2$ and $f\in\mathcal{O}(\Pi_{\boldsymbol{\a}},E)$ be holomorphic, where $\Pi_{\boldsymbol{\a}}=\Pi_{\boldsymbol{\a}}(a,b)$. If $\|f(\boldsymbol{x})\|\leq K(|\boldsymbol{x}^{\boldsymbol{\a}}|)$ on $\Pi_{\boldsymbol{\a}}$, for some function $K:(0,+\infty)\rightarrow (0,+\infty)$, then there is $g\in\mathcal{O}(V,E)$, $V=V(a,b)$, such that $f(\boldsymbol{x})=g(\boldsymbol{x}^{\boldsymbol{\a}})$.
\end{prop}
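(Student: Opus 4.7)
The plan is to parameterize the fibers of $\pi:\Pi_\aa\to V(a,b)$, $\pi(\xx)=\xx^\aa$, as copies of $(\C^*)^{d-1}$ and then use a Liouville-type argument to conclude that $f$ is constant along them.

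For fixed $t\in V(a,b)$, the fiber $F_t=\pi^{-1}(t)\subset\Pi_\aa$ is a $(d-1)$-dimensional complex submanifold, since $\pi$ is a holomorphic submersion (its differential $d\pi=\sum_j\a_j(\xx^\aa/x_j)\,dx_j$ is nonzero on $\Pi_\aa$). Consider the identity component $H$ of the algebraic subgroup $\{\boldsymbol\xi\in(\C^*)^d:\boldsymbol\xi^\aa=1\}$; a change of coordinates diagonalizing the character $\boldsymbol\xi\mapsto\boldsymbol\xi^\aa$ (working in the standard $\C^d/(2\pi i\,\Z)^d$ model of $(\C^*)^d$) shows $H\cong(\C^*)^{d-1}$ as complex Lie groups. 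Picking a base point $\xx^{(0)}\in F_t$, the connected component of $F_t$ through $\xx^{(0)}$ is the $H$-orbit $H\cdot\xx^{(0)}\cong(\C^*)^{d-1}$. Composing $f|_{F_t}$ with this parameterization yields a holomorphic map $(\C^*)^{d-1}\to E$ uniformly bounded by $K(|t|)$. Since $d\geq 2$, Riemann's removable singularities theorem applied in each variable (and combined with Hartogs) extends it to a bounded entire map on $\C^{d-1}$, and Liouville's theorem in several variables forces it to be constant.

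We may therefore set $g(t):=f|_{F_t}$ unambiguously, so that $f(\xx)=g(\xx^\aa)$ by construction. Holomorphy of $g:V(a,b)\to E$ follows from the implicit function theorem: for each $t_0\in V$, the submersion $\pi$ admits a local holomorphic section $s:U\to\Pi_\aa$ with $\pi\circ s=\mathrm{id}_U$, and $g(t)=f(s(t))$ is then holomorphic on $U$. The main obstacle I anticipate is ensuring that $F_t\cap\Pi_\aa$ is a single $H$-orbit, since the full algebraic kernel $\{\boldsymbol\xi^\aa=1\}$ can have up to $\gcd(\a_1,\dots,\a_d)$ connected components. Under the natural reading that $\Pi_\aa$ is one connected component of $\{\xx:a<\arg(\xx^\aa)<b\}$, this is automatic, since the action of any non-identity component of the algebraic kernel either leaves $\Pi_\aa$ or maps into a different connected component, and hence all of $F_t\cap\Pi_\aa$ is covered by a single $H$-orbit, so the argument goes through without modification.
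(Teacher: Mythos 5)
Your proof is correct in substance, but it takes a genuinely different route from the paper's. The paper argues by induction on $d$: it first ramifies, writing $f(\xx)=\sum_{\00\leq \bb<\aa}\xx^{\bb}f_{\bb}(x_1^{\a_1},\dots,x_d^{\a_d})$, then expands each $f_{\bb}$ in a Laurent series along the monomial fibers (the $T_{\11}$ construction), and uses Cauchy estimates on the unbounded sector, letting the radius tend to $+\infty$, to kill every Laurent coefficient except the one depending on the monomial alone; the induction step needs Hartogs' theorem to recover joint holomorphy of the intermediate functions. You replace all of this by the observation that the fibers of $\xx\mapsto\xx^{\aa}$ are orbits of the $(d-1)$--dimensional subtorus $H\cong(\C^*)^{d-1}$ of $\ker\chi$, $\chi(\xx)=\xx^{\aa}$, so that the hypothesis $\|f\|\leq K(|\xx^{\aa}|)$ makes $f$ a bounded holomorphic ($E$--valued) map on each orbit, hence constant by removable singularities plus Liouville; descent and local holomorphic sections of the submersion then give $g$ and its holomorphy. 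Your route avoids the induction, the ramification and the Laurent machinery, and it makes transparent where unboundedness of the sector is used (the fibers are complete copies of $(\C^*)^{d-1}$ only because $r=+\infty$), which in the paper appears as the step ``let $r\to+\infty$''.

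The one point that needs care is exactly the one you flag at the end, and your resolution there changes the hypothesis rather than verifying it. With the paper's definition, $\Pi_{\aa}(a,b)$ is the \emph{whole} set $\{\xx\in(\C^*)^d \,|\, a<\arg(\xx^{\aa})<b\}$, not a chosen connected component; this set, and every fiber of $\chi$ over $V(a,b)$, is connected precisely when $\gcd(\a_1,\dots,\a_d)=1$. When $\gcd(\aa)=g>1$ each fiber splits into $g$ distinct $H$--orbits lying in different components of $\Pi_{\aa}$, and your descent only produces one function per component: a map taking two different constant values on the two components of $\Pi_{(2,2)}(a,b)$ satisfies the hypotheses, so a single-valued $g$ on $V(a,b)$ genuinely requires connectedness (or some reduction) and cannot be obtained ``without modification''. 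This is the same subtlety the paper absorbs at the start of its $d=2$ case with the reduction ``assume $\gcd(\a_1,\a_2)=1$ by changing $K$ adequately''. So, to match the statement as used in the paper, you should either make the connected-component reading explicit as part of the statement, or add the corresponding $\gcd$ reduction; for $\gcd(\aa)=1$ (in particular after such a reduction) the sector and all fibers are connected and your argument is complete as written.
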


\begin{proof}We proceed by induction on $d$. The case $d=2$ is proved in \cite{CM2}, but we repeat the proof here for the sake of completeness. First, note that we can assume $\text{g.c.d.}(\a_1,\a_2)=1$ by changing $K$ adequately. Given $\aa=(\a_1,\a_2)$ and $f\in\mathcal{O}(\Pi_{\aa},E)$, we write $f$ as in equation (\ref{Decomposition for ramifications}), and if $f_{\beta_1,\beta_2}\left(t/u_2,u_2\right)=\sum_{m\in\Z} f_{\beta_1,\beta_2,m}(t)u_2^m$ for $|t|/r<|u_2|<r$, equation (\ref{Equation for f_beta}) shows that $\|f_{\beta_1,\beta_2}(u_1,u_2)\|\leq |u_1|^{-\beta_1/\a_1}|u_2|^{-\beta_2/\a_2}K(|u_1u_2|)$, $(u_1,u_2)\in \Pi_\11$. Using Cauchy's formulas we obtain $$\|f_{\beta_1,\beta_2,m}(t)\|\leq \frac{|t|^{-\beta_1/\a_1}K(|t|)}{r^{m+\beta_2/\a_2-\beta_1/\a_1}}, \|f_{\beta_1,\beta_2,-m}(t)\|\leq \frac{|t|^{m-\beta_2/\a_2}K(|t|)}{r^{m+\beta_1/\a_1-\beta_2/\a_2}},\quad m\in\N, 0\leq \beta_j<\a_j, j=1,2.$$

If $m\geq 1$, the exponents of $r$ in the previous inequalities are positive. Since $r$ was arbitrary we can take $r\rightarrow+\infty$ and conclude that $f_{\beta_1,\beta_2.m}\equiv 0$. If $m=0$ and $(\beta_1,\beta_2)\neq(0,0)$, by considering one of the preceding inequalities, according to $\beta_2/\a_2>\beta_1/\a_1$ or $\beta_2/\a_2<\beta_1/\a_1$, the same conclusion follows. Thus $f(x_1,x_2)=f_{0,0,0}(x_1^{\a_1}x_2^{\a_2})$.

Now assume that the result is valid for some $d$ and let us prove it for $d+1$. To simplify notations we name our coordinates $(\xx,z)\in\C^d\times\C$ and $(\aa,p)\in(\N^+)^d\times \N^+$. If we decompose $f$ as $$f(\xx,z)=\sum_{j=0}^{p-1} z^j f_j(\xx,z^p),\quad f_j\in\mathcal{O}(\Pi_{\aa,1},E),$$ the bounds for $f$ show that $\|f_j(\xx,\zeta)\|\leq |\zeta|^{-j/p} K(|\xx^\aa \zeta|)$, $j=0,\dots,p-1$. For a fixed $\zeta\in\C^*$ let us write  $\Pi_\aa^\zeta=\{\xx\in(\C^*)^d \,|\, a<\text{arg}(\xx^\aa)+\text{arg}(\zeta)<b\}$ and $V^\zeta=\{\xi\in\C^* \,|\, a<\text{arg}(\xi)+\text{arg}(\zeta)<b\}$. Applying the induction hypothesis to each $f_j(\cdot,\zeta)\in\mathcal{O}(\Pi_\aa^\zeta,E)$, we can conclude that there are maps $g_j(\cdot,\zeta)\in \mathcal{O}(V^\zeta,E)$ such that $f_j(\xx,\zeta)=g_j(\xx^\aa,\zeta)$.

We can now define $g_j$ on $\Pi_{(1,1)}=\Pi_{(1,1)}(a,b)$ in such a way that $g_j\in \mathcal{O}(\Pi_{(1,1)},E)$. Indeed, if $(\xi,\zeta)\in \Pi_{(1,1)}$, then $\xi\in V^\zeta$ and $g_j(\xi,\zeta)$ is already defined. To show that $g_j$ is holomorphic, by using Hartog's theorem, see e.g.,  \cite[p. 28]{Shabat}, it is sufficient to show that it is holomporphic at any point $(\xi_0,\zeta_0)\in\Pi_{(1,1)}$ w.r.t. each of the variables. It only remains to prove this for the second one: choosing $\xx_0\in \Pi_\aa^{\zeta_0}$ such that $\xx_0^\aa=\xi_0$, we know that $g_j(\xi_0,\zeta_0)=f_j(\xx_0^\aa,\zeta_0)$ that depends holomorphically on the second variable. The functions $g_j$ satisfy $\|g_j(\xi,\zeta)\|\leq |\zeta|^{-j/p} K(|\xi\zeta|)$ for $(\xi,\zeta)\in \Pi_{(1,1)}$. The same argument used in the case $d=2$ shows that $g_j\equiv 0$ for $j\neq0$, and $g_0(\xi,\zeta)=g_{0,0}(\xi\zeta)$ for some $g_{0,0}\in\mathcal{O}(V,E)$. In conclusion $f(\xx,z)=g_{0,0}(\xx^\aa z^p)$ as we wanted to show. The induction principle allows us to conclude the proof.
\end{proof}

\begin{defi}\label{Def asym exp monomial}Let $f\in\mathcal{O}(\Pi_{\boldsymbol{\a}},E)$, $\Pi_{\boldsymbol{\a}}=\Pi_{\boldsymbol{\a}}(a,b,r)$ and $\hat{f}\in \hat{\mathcal{O}}'_d(E)$ be given. We will say that \textit{$f$ has $\hat{f}$ as asymptotic expansion at the origin in $\boldsymbol{x}^{\boldsymbol{\a}}$} (denoted by $f\sim^{\boldsymbol{\a}} \hat{f}$ on $\Pi_{\boldsymbol{\a}}$) if there is $0<r'\leq r$ such that $\hat{T}_{\aa}(\hat{f})=\sum f_{\aa,n}t^n\in \mathcal{E}^{\boldsymbol{\a}}_{r'}[[t]]$, and for every proper subsector  $\Pi_{\boldsymbol{\a}}'=\Pi_{\boldsymbol{\a}}(a',b',\rho)$, $0<\rho<r'$, and $N\in\N$, there exists $C_N(\Pi_{\boldsymbol{\a}}')>0$ such that \begin{equation}\label{formula def asym x^pe^q}
\left\|f(\boldsymbol{x})-\sum_{n=0}^{N-1}f_{\aa,n}(\boldsymbol{x})\boldsymbol{x}^{n\boldsymbol{\a}} \right\|\leq C_N(\Pi_{\boldsymbol{\a}}')|\boldsymbol{x}^{N\boldsymbol{\a}}|,\quad \text{ on } \Pi_{\boldsymbol{\a}}'.
\end{equation}
	
The asymptotic expansion is said to be of \textit{$s$--Gevrey type} (denoted by $f\sim^{\boldsymbol{\a}}_{s} \hat{f}$ on $\Pi_{\boldsymbol{\a}}$) if it is possible to choose $C_n(\Pi_{\boldsymbol{\a}}')=C(\Pi_{\boldsymbol{\a}}')A(\Pi_{\boldsymbol{\a}}')^n n!^s$, for some  $C(\Pi_{\boldsymbol{\a}}')$, $A(\Pi_{\boldsymbol{\a}}')>0$ independent of $n$.
\end{defi}

From the very definition of $f\sim^{\boldsymbol{\a}}\hat{f}=\sum a_{\boldsymbol{\beta}} \boldsymbol{x}^{\boldsymbol{\beta}}$ on $\Pi_{\boldsymbol{\a}}$ we can deduce, by using (\ref{formula def asym x^pe^q}) for $N=1$, that \begin{equation}\label{limits of x-0}
a_{\boldsymbol{0}}=\lim_{\Pi'_{\boldsymbol{\a}}\ni \boldsymbol{x}\rightarrow\boldsymbol{0}} f(\boldsymbol{x}),\quad f_{J,\00_J}(\xx_{J^c})=\lim_{{\xx_J\rightarrow0}\atop {\boldsymbol{x}\in \Pi'_{\boldsymbol{\a}}}} f(\boldsymbol{x}),\quad J\subsetneq [1,d],\quad \Pi'_{\boldsymbol{\a}}\subset \Pi_{\boldsymbol{\a}}.
\end{equation} 

Monomial asymptotic expansions can be reduced to the case of one variable by using the operators $T_\aa$ and $\hat{T}_\aa$. Indeed, direct estimates using (\ref{inq Tpq}) show that if $f\in\mathcal{O}(\Pi_{\aa}(a,b,r),E)$, and $\hat{f}\in \hat{\mathcal{O}}'_d(r',E)$, $r'\leq r$, then $f\sim^\aa\hat{f}$ on $\Pi_{\aa}(a,b,r)$ if and only if for every $0<\rho<r'$, $T_\aa(f)_\rho\sim \hat{T}_\aa(\hat{f})$ on $V(a,b,\rho^d)$. The same statement is valid in the Gevrey case, and in this case it follows that $\hat{f}\in E[[\boldsymbol{x}]]^{\boldsymbol{\a}}_{s}$, see \cite[Prop. 3.11]{Sum wrt germs} for details.

Another characterization of monomial asymptotic expansions is obtained by approximating by holomorphic functions.

\begin{prop}\label{equiv f iff f_n for x^pe^q}Let $f\in\mathcal{O}(\Pi_{\boldsymbol{\a}},E)$, $\Pi_{\boldsymbol{\a}}=\Pi_{\boldsymbol{\a}}(a,b,r)$ and $\hat{f}\in \hat{\mathcal{O}}'_d(r',E)$, $r'\leq r$, be given. The following assertions are equivalent:
\begin{enumerate}
\item $f\sim^{\boldsymbol{\a}} \hat{f}$ on $\Pi_{\boldsymbol{\a}}$,

\item There is $R>0$ and a sequence $(F_N)_{N\in\N}\subset \mathcal{O}_b(D_R^d,E)$, $F_0=0$, such that for each  subsector $\Pi_{\boldsymbol{\a}}'$ of $\Pi_{\boldsymbol{\a}}$ and $N\in\N$, there is $A_N(\Pi_{\boldsymbol{\a}}')>0$ such that \begin{equation}\label{f-f_N}\|f(\boldsymbol{x})-F_N(\boldsymbol{x})\|\leq A_N(\Pi_{\boldsymbol{\a}}')|\boldsymbol{x}^{N\boldsymbol{\a}}|,\quad \text{ on }\Pi_{\boldsymbol{\a}}'\cap D_R^d.\end{equation}
\end{enumerate}
If $s>0$, $f\sim^{\boldsymbol{\a}}_s \hat{f}$ on $\Pi_{\boldsymbol{\a}}$ if and only if inequality (\ref{f-f_N}) is satisfied with $A_n(\Pi_{\boldsymbol{\a}}')=C(\Pi_{\boldsymbol{\a}}')A(\Pi_{\boldsymbol{\a}}')^n n!^s$ for some $C(\Pi_{\boldsymbol{\a}}'), A(\Pi_{\boldsymbol{\a}}')>0$ independent of $n$, and there are $B,D>0$ such that $\|F_n\|_R\leq DB^n n!^s$, for all $n\in\N$. In any case, $\hat{f}$ is given by the limit of the Taylor series at the origin of the $F_n$, in the $\mathfrak{m}$--topology of $E[[\boldsymbol{x}]]$, $\mathfrak{m}=(\xx)$.
\end{prop}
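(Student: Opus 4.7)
The plan is to establish (1) $\Rightarrow$ (2) by exhibiting the $F_N$ explicitly as partial $\aa$--sums of $\hat f$, and (2) $\Rightarrow$ (1) by first extracting $\hat f$ from the approximants through a Taylor--coefficient stabilization argument and then controlling the remainder by an analytic division by $\xx^{N\aa}$. For the easy direction I would set
\[F_N:=\text{App}_{N\aa}(\hat f)=\sum_{n=0}^{N-1} f_{\aa,n}(\xx)\,\xx^{n\aa},\]
as in (\ref{App Na}). Since each $f_{\aa,n}$ lies in $\mathcal{E}^{\aa}_{r'}\subset\mathcal O_b(D_{r'}^d,E)$, the $F_N$ belong to $\mathcal O_b(D_{r'}^d,E)$ with $F_0=0$, and the bound (\ref{f-f_N}) is precisely (\ref{formula def asym x^pe^q}). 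In the Gevrey case, $\hat f\in E[[\xx]]^{\aa}_s$ (cf.\ the remark just before the proposition) furnishes $\|f_{\aa,n}\|_{r'}\le DB^n n!^s$; summing the geometric--factorial series then produces a Gevrey bound $\|F_N\|_{r'}\le D'B'^N N!^s$.

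For (2) $\Rightarrow$ (1) the main step is to prove that, for each fixed $\bb\in\N^d$, the Taylor coefficient $c_{N,\bb}$ of $F_N$ at the origin is eventually constant in $N$. Setting $g_N:=F_{N+1}-F_N$, the triangle inequality gives $\|g_N(\xx)\|\le\bigl(A_N(\Pi_\aa')+A_{N+1}(\Pi_\aa')\bigr)|\xx^{N\aa}|$ on $\Pi_\aa'\cap D_R^d$. For any $\bb$ with $\beta_j<N\a_j$ for some $j$, Remark \ref{Polysectors} embeds a polysector $V_1\times\cdots\times V_d\subset\Pi_\aa'$; freezing $x_k\in V_k$ at small nonzero values for $k\ne j$, the function $x_j\mapsto g_N(\ldots,x_j,\ldots)$ is holomorphic on $D_R$ and of order $O(|x_j|^{N\a_j})$ on the one--variable sector $V_j$, so the classical one--variable vanishing of Taylor coefficients of order $<N\a_j$ yields
\[\sum_{\gg:\,\gamma_j=n}\bigl(c_{N+1,\gg}-c_{N,\gg}\bigr)\prod_{k\ne j}x_k^{\gamma_k}=0\quad\text{for every }n<N\a_j.\]
This being a power series in the frozen variables that vanishes on a polydisc forces $c_{N+1,\bb}=c_{N,\bb}$. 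Defining $a_\bb:=\lim_N c_{N,\bb}$ and $\hat f:=\sum_\bb a_\bb\xx^\bb$, decomposition (\ref{para definir Tpq}) identifies $f_{\aa,n}$ with $F_{N,\aa,n}\in\mathcal E^{\aa}_R$ as soon as $N\ge n+1$, hence $\hat f\in\hat{\mathcal O}'_d(R,E)$; the $\mathfrak m$--adic convergence claim follows at once.

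To close the bound, I would observe that $F_N-\text{App}_{N\aa}(\hat f)$ is holomorphic on $D_R^d$ with Taylor series supported on $\{\bb:\bb\ge N\aa\}$, so it factors as $\xx^{N\aa}\tilde h_N(\xx)$ with $\tilde h_N\in\mathcal O(D_R^d,E)$; Cauchy estimates on $F_N$ yield, for any $\rho<R$,
\[\|\tilde h_N\|_\rho\le\frac{\|F_N\|_R}{R^{N|\aa|}(1-\rho/R)^d}.\]
Adding this to the hypothesis gives $\|f(\xx)-\text{App}_{N\aa}(\hat f)(\xx)\|\le\bigl(A_N(\Pi_\aa')+\|\tilde h_N\|_\rho\bigr)|\xx^{N\aa}|$ on any subsector of radius at most $\rho$, which is (\ref{formula def asym x^pe^q}); in the Gevrey case the bound $\|F_N\|_R\le DB^N N!^s$ propagates through the above estimate, so both summands remain Gevrey--$s$ in $N$. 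The main technical obstacle is the stabilization step: the monomial sector controls only $\text{arg}(\xx^\aa)$ rather than the individual $\text{arg}(x_j)$, so the one--variable vanishing must be run coordinate by coordinate inside the polysectors of Remark \ref{Polysectors}, and the equality of Taylor coefficients then transported to all $\bb\not\ge N\aa$ via analytic continuation in the frozen variables.
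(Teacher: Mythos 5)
Your proposal is correct in substance, but it follows a genuinely different route from the paper. The paper disposes of the converse in a few lines by pushing everything through the operator $T_{\aa}$: it sets $g_{N+1}=T_{\aa}(F_N)_R$, applies the weighted estimate (\ref{inq Tpq}) with $K(u)=u^{N+1}$ to $f-F_N$ to get $\|T_{\aa}(f)_{\rho'}(t)-g_{N+1}\|_{\rho'}\leq C\,|t|^{N}$, and then invokes the one--variable fact (recalled in Section \ref{Asymptotic and summability in one variable}) that an asymptotic expansion can be read off from an approximating sequence of bounded holomorphic functions, together with the equivalence $f\sim^{\aa}\hat f \Leftrightarrow T_{\aa}(f)_\rho\sim\hat T_{\aa}(\hat f)$; the Gevrey bounds on $\|F_N\|_R$ transfer directly to the $g_{N+1}$. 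You instead work directly in $\C^d$: you stabilize the Taylor coefficients of the $F_N$ by a coordinate--wise one--variable vanishing argument inside polysectors, reconstruct $\hat f$ as the $\mathfrak m$--adic limit, and then replace $F_N$ by $\mathrm{App}_{N\aa}(\hat f)$ through the factorization $F_N-\mathrm{App}_{N\aa}(\hat f)=\xx^{N\aa}\tilde h_N$ with a Cauchy estimate (note the estimate is really on the coefficients of $F_N-\mathrm{App}_{N\aa}(\hat f)$, which in the relevant range coincide with those of $F_N$, so your bound is legitimate). What your route buys is self--containedness: it avoids $T_{\aa}$ and (\ref{inq Tpq}) entirely, in effect re--proving the one--variable approximation lemma in the monomial setting, and it makes explicit how the Gevrey constants arise from $\|F_N\|_R$; what the paper's route buys is brevity and a clean transfer of all Gevrey bookkeeping to one variable. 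The only points to tidy in your write--up are cosmetic: the polysector should be taken inside a slightly larger proper subsector $\Pi_\aa''\supset\Pi_\aa'$ on which (\ref{f-f_N}) is applied (Remark \ref{Polysectors} needs two nested sectors), the frozen--variable identity holds on an open subset of the polydisc (identity theorem) rather than on a full polydisc, and the radii must be shrunk consistently ($D_{R'}^d$ with $R'<R$, subsector radius $\rho'$ with $\rho'^{1/\a_j}\leq\rho$) — none of which affects the validity of the argument.
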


\begin{proof}If $f\sim^{\boldsymbol{\a}} \hat{f}$ on $\Pi_{\boldsymbol{\a}}$ and $\hat{T}_{\aa}(\hat{f})(t)=\sum f_{\aa,n} t^n$, then $F_N(\boldsymbol{x})=\sum_{n<N} f_{\aa,n}(\boldsymbol{x})\boldsymbol{x}^{n\boldsymbol{\a}}$ satisfies the requirements. Conversely, suppose we have such a sequence $(F_N)_{N\in\N}$. Note that each $T_{\boldsymbol{\a}}(F_N)_R$ is holomorphic on $D_R$ and has $\hat{T}_{\aa}(F_N)$ as Taylor series at the origin. Let $g_{N+1}=T_{\boldsymbol{\a}}(F_N)_R$. Applying inequalities (\ref{inq Tpq}) for $N+1$ to inequality (\ref{f-f_N}) with $K(u)=u^{N+1}$, it follows that
$$\|T_{\boldsymbol{\a}}(f)_{\rho'}(t)-g_{N+1}\|_{\rho'}\leq R_\aa(\rho',\dots,\rho',\rho)A_{N+1}(\Pi_{\boldsymbol{\a}}')|t|^N,$$ in the corresponding sector, where $0<|t|<\rho'<\min\{\rho,R\}$. Thus we obtain that $T_{\boldsymbol{\a}}(f)_{\rho'}$ has $\hat{T}_{\aa}(\hat{f})$ as asymptotic expansion. But $\hat{T}_{\aa}(\hat{f})$ is given by the limit of the series $\hat{T}_{\aa}(F_N)$ in the $\mathfrak{m}$--topology of $\mathcal{E}^{\boldsymbol{\a}}_r[[t]]$, $\mathfrak{m}=(t)$, thus $f\sim^{\boldsymbol{\a}}\hat{f}$ on $\Pi_{\boldsymbol{\a}}$ as we wanted to show. The $s$--Gevrey case also follows since the sequence $(g_{N+1})_{N\in\N}$ has bounds of $s$--Gevrey type  if the sequence $(F_N)_{N\in\N}$ does.
\end{proof}

These characterizations allow us to prove that monomial asymptotic expansions are stable under sums, products and partial derivatives.
In particular, it follows from the relations (\ref{limits of x-0}) that if $f\sim^\aa\hat{f}$ on $\Pi_\aa$, then \begin{equation}\label{limits x-0 general}
a_{\boldsymbol{\beta}}=\lim_{\Pi'_{\boldsymbol{\a}}\ni \boldsymbol{x}\rightarrow\boldsymbol{0}} \frac{1}{\boldsymbol{\beta}!}\frac{\d^{\boldsymbol{\beta}}f}{\d \boldsymbol{x}^{\boldsymbol{\beta}}}(\boldsymbol{x}), \quad  f_{J,\bb_J}(\xx_{J^c})=\lim_{{\xx_J\rightarrow0}\atop {\boldsymbol{x}\in \Pi'_{\boldsymbol{\a}}}} \frac{1}{\bb_J!}\frac{\d^{\bb_J}f}{\d \xx_J^{\bb_J}}(\boldsymbol{x}),\quad \boldsymbol{\beta}\in\N^d, J\subsetneq [1,d],
\end{equation} In particular, $\hat{f}$ is completely determined by $f$.

\begin{nota}\label{On the other inqs for mon asyp}Assume that $f\sim_s^\aa \hat{f}=\sum a_\bb \xx^\bb$ on $\Pi_\aa$, and take $D,B>0$ such that $\|a_\bb\|\leq DB^{|\bb|}\min_{1\leq j\leq d}\{\beta_j!^{s/\a_j}\}$, for all $\bb\in\N^d$. We can also consider how $\hat{f}$ approximates $f$ for any index $\gg$, other than $N\aa$ as in Definition \ref{Def asym exp monomial}. Indeed, given $\gg\in\N^d$, if we consider $N=\max_{1\leq j\leq d}\lfloor\gamma_j/\a_j\rfloor+1$, then  $\00\leq N\aa-\gamma$. Thus, using Remark \ref{Bounds factorial min max}, we find that in any subsector $\Pi_\aa'$ of radius $\rho<\min_{1\leq j\leq d}1/B^{\a_j}$, $$
\|f(\xx)-\text{App}_\gg(\hat{f})(\xx)\|\leq  CA^N N!^s|\xx^{N\aa}|+\|\text{App}_{N\aa}(\hat{f})(\xx)-\text{App}_\gg(\hat{f})(\xx)\|,$$ but the second term is bounded by \begin{align*}
\sum_{N\aa-\gg\not\leq \boldsymbol{\delta}} \|a_{\gg+\boldsymbol{\delta}}\||\xx^{\gg+\boldsymbol{\delta}}|
\leq & DB^{|\gg|}|\xx ^{\gg}|\sum_{j=1}^{d}\sum_{\beta_j=0}^{N\a_j-\gamma_j-1} \sum_{\boldsymbol{\delta}\in \N^d, \delta_j=\b_j} B^{|\boldsymbol{\delta}|} |\xx^{\boldsymbol{\delta}}| \min_{1\leq l\leq d}(\gamma_l+\delta_l)!^{s/\a_l}\\
\leq & DB^{|\gg|}\left[\sum_{j=1}^{d} \sum_{\beta_j=0}^{N\a_j-\gamma_j-1} \frac{(\gamma_j+\beta_j)!^{s/\a_j}}{\prod_{k\neq j}(1-B\rho^{1/\a_k})}\right] |\xx^\gg|.
\end{align*}  Since $(\gamma_j+\beta_j)!<(\a_j N)!$, by taking into account Remark \ref{Bounds factorial min max} we can conclude that $$\|f(\xx)-\text{App}_\gg(\hat{f})(\xx)\| \leq \widetilde{C}\widetilde{A}^{|\gg|} \left(\max_{1\leq j\leq d} \gamma_j!^{s/\a_j}\right)|\xx^\gg|,$$ for adequate constants $\widetilde{C}, \widetilde{A}>0$ depending only on $\Pi_\aa'$.
\end{nota}

We can finally give a new, but expected, characterization of monomial asymptotic expansions in terms of bounded derivatives. We follow the proof of Proposition 3 in \cite{Haraoka}.

\begin{teor}\label{Bounded derivatives}Let $f\in\mathcal{O}(\Pi_{\boldsymbol{\a}},E)$, $\Pi_{\boldsymbol{\a}}=\Pi_{\boldsymbol{\a}}(a,b,r)$ and $\hat{f}\in \hat{\mathcal{O}}'_d(r',E)$, $r'\leq r$. Then $f\sim^{\aa}\hat{f}$ on $\Pi_{\aa}$ if and only if for each $\Pi'_{\aa}\subset \Pi_{\aa}$,  $\sup_{\xx\in\Pi'_{\aa}} \left\|\frac{1}{\bb!}\frac{\d^{\bb}f}{\d\xx^{\bb}}(\xx)\right\|$ is finite, for all $\bb\in\N^d$. More precisely, if $s>0$, the following assertions are equivalent:
\begin{enumerate}
\item $f\sim^{\aa}_s\hat{f}$ on $\Pi_{\aa}$,

\item For each $\Pi'_{\aa}\subset \Pi_{\aa}$, there are constants $C,A>0$ such that $$\sup_{\xx\in\Pi'_{\aa}} \left\|\frac{1}{(N\aa)!}\frac{\d^{N\aa}f}{\d\xx^{N\aa}}(\xx)\right\|\leq CA^{N} N!^s,\quad N\in\N.$$

\item For each $\Pi'_{\aa}\subset \Pi_{\aa}$, there are constants $C,A>0$ such that $$\sup_{\xx\in\Pi'_{\aa}} \left\|\frac{1}{\bb!}\frac{\d^{\bb}f}{\d\xx^{\bb}}(\xx)\right\|\leq CA^{|\bb|} \max\left\{\beta_1!^{s/\a_1},\dots,\beta_d!^{s/\a_d}\right\},\quad \bb\in\N^d.$$
\end{enumerate}
\end{teor}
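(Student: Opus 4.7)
The plan is to establish the cyclic chain $(3)\Rightarrow(2)\Rightarrow(1)\Rightarrow(3)$; the non-Gevrey characterization will follow from the same reasoning with the Gevrey factors dropped throughout. The step $(3)\Rightarrow(2)$ is immediate: substituting $\bb=N\aa$ in (3), the inequality $(N\a_j)!\leq\a_j^{N\a_j}N!^{\a_j}$ of (\ref{Eq bounds factorial}) gives $\max_{1\leq j\leq d}(N\a_j)!^{s/\a_j}\leq(\max_j\a_j)^{Ns}N!^s$, which after renaming the base constant $A$ produces (2).

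For $(1)\Rightarrow(3)$, fix $\Pi'_\aa\Subset\Pi_\aa$ and enlarge it to $\Pi'_\aa\Subset\Pi''_\aa\Subset\Pi_\aa$. The decisive remark is that every monomial $a_\gg\xx^\gg$ appearing in $\text{App}_\bb(\hat{f})(\xx)=\sum_{\bb\not\leq\gg}a_\gg\xx^\gg$ satisfies $\gamma_j<\beta_j$ for some $j$, so $\d^\bb\text{App}_\bb(\hat{f})/\d\xx^\bb\equiv0$ termwise; consequently, for every $\xx_0\in\Pi'_\aa$,
\[
\frac{\d^\bb f}{\d\xx^\bb}(\xx_0)=\frac{\d^\bb(f-\text{App}_\bb(\hat{f}))}{\d\xx^\bb}(\xx_0).
\]
One then chooses $c\in(0,1)$, depending only on $\Pi'_\aa$ and $\Pi''_\aa$, so that the polydisc $\prod_{j=1}^d D(x_{j,0},c|x_{j,0}|)$ lies inside $\Pi''_\aa$ for every $\xx_0\in\Pi'_\aa$; the angular and radial conditions defining $\Pi''_\aa$ permit such a uniform coordinate-wise dilation by the factor $1+c$. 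Cauchy's formula on this polydisc, together with the estimate from Remark \ref{On the other inqs for mon asyp} specialized at $\gg=\bb$,
\[
\|f(\boldsymbol{w})-\text{App}_\bb(\hat{f})(\boldsymbol{w})\|\leq\widetilde{C}\widetilde{A}^{|\bb|}\Bigl(\max_{1\leq j\leq d}\beta_j!^{s/\a_j}\Bigr)|\boldsymbol{w}^\bb|,
\]
and the elementary observation $|\boldsymbol{w}^\bb|\leq(1+c)^{|\bb|}|\xx_0^\bb|$ on the polydisc, yields (3) after the factor $|\xx_0^\bb|$ cancels against the Cauchy denominator $c^{|\bb|}|\xx_0^\bb|$.

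The implication $(2)\Rightarrow(1)$ is the delicate converse. My plan is to reduce to the one-variable setting via the operator $T_\aa$. Starting from the identity $T_\aa(f)_\rho(\xx^\aa)(\xx)=f(\xx)$ and differentiating repeatedly (using Cauchy's formula on small polydiscs to dominate the intermediate mixed derivatives produced by the chain rule), one should translate the Gevrey bound in (2) on $\d^{N\aa}_\xx f$ into a $t$-derivative bound of the form $\|d^N T_\aa(f)_\rho(t)/dt^N\|_{\mathcal{E}^\aa_\rho}\leq CA^N N!^s\cdot N!$ on every proper subsector of $V(a,b,\rho^d)$. The classical one-variable bounded-derivative characterization of Gevrey asymptotic expansions then furnishes $\hat{g}\in\mathcal{E}^\aa_\rho[[t]]$ with $T_\aa(f)_\rho\sim_s\hat{g}$. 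Setting $\hat{f}:=\hat{T}_\aa^{-1}(\hat{g})$ and invoking the equivalence between monomial and one-variable Gevrey asymptotic expansions recalled in the paragraph preceding Proposition \ref{equiv f iff f_n for x^pe^q} yields $f\sim^\aa_s\hat{f}$ on $\Pi_\aa$. The principal obstacle is the Faà di Bruno-type combinatorial bookkeeping required to express $d^N_t T_\aa(f)_\rho$ in terms of the partial derivatives of $f$, and in verifying that the control on $\d^{N\aa}_\xx f$ alone suffices to dominate all the lower-order mixed-derivative terms generated along the way.
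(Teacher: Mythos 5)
Your steps $(3)\Rightarrow(2)$ and $(1)\Rightarrow(3)$ are sound and are essentially the paper's own argument: the first is the substitution $\bb=N\aa$ together with $(N\a_j)!\leq \a_j^{\a_j N}N!^{\a_j}$, and the second is Cauchy's formula on polydiscs (or circles) of radii proportional to $|x_{j,0}|$, kept inside a larger subsector by a uniform angular margin (the paper organizes this via Remark \ref{Polysectors}), combined with the estimate of Remark \ref{On the other inqs for mon asyp} at $\gg=\bb$ and the vanishing of $\frac{\d^{\bb}}{\d\xx^{\bb}}\mathrm{App}_{\bb}(\hat{f})$.

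The genuine gap is in $(2)\Rightarrow(1)$. Your reduction through $T_{\aa}$ is not a chain-rule situation: $T_{\aa}(f)_\rho(t)$ is built from Laurent coefficients of $f\bigl(t/(x_2\cdots x_d),x_2,\dots,x_d\bigr)$ (after ramification), so $\frac{d^N}{dt^N}T_{\aa}(f)_\rho(t)$ is expressed through contour integrals of pure $x_1$--derivatives of $f$ multiplied by negative powers of the remaining variables, evaluated at points other than $\xx$; hypothesis (2) controls only the very special multi-indices $N\aa$, and there is no evident way to dominate these terms by it --- indeed, control of general $\frac{1}{\bb!}\frac{\d^{\bb}f}{\d\xx^{\bb}}$ is statement (3), which in your scheme is only obtained \emph{after} (1). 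So the ``Faà di Bruno bookkeeping'' you defer is not bookkeeping but the entire content of the implication, and as proposed it is unlikely to close. The paper's device is far more elementary and bypasses $T_{\aa}$ altogether: writing the remainder by the iterated one-variable Taylor formula,
\begin{equation*}
f(\xx)-\mathrm{App}_{N\aa}(\hat{f})(\xx)=\int_0^{x_1}\!\!\cdots\int_0^{x_d}\frac{(x_1-w_1)^{N\a_1-1}}{(N\a_1-1)!}\cdots\frac{(x_d-w_d)^{N\a_d-1}}{(N\a_d-1)!}\,\frac{\d^{N\aa}f}{\d\xx^{N\aa}}(\boldsymbol{w})\,d\boldsymbol{w},
\end{equation*}
along the segments $w_j=\rho_j x_j$, $0<\rho_j\leq1$, which stay in $\Pi'_\aa$ because $\arg(\boldsymbol{w}^{\aa})=\arg(\xx^{\aa})$ and the moduli only decrease. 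The sup bound in (2) then yields exactly inequality (\ref{formula def asym x^pe^q}) with constants $CA^NN!^s$, i.e.\ $f\sim^{\aa}_s\hat{f}$, in one line. You should replace your $(2)\Rightarrow(1)$ by this Taylor-with-integral-remainder argument (or else supply the missing estimate on $\frac{d^N}{dt^N}T_{\aa}(f)_\rho$ from the $\d^{N\aa}$ bounds alone, which I do not see how to do).
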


\begin{proof}We only consider the statement for the $s$--Gevrey case. It is clear that (3) implies (2). To prove that (2) implies (1), we use Taylor's formula and equations (\ref{App Na}) and (\ref{limits x-0 general}) to write  \begin{align*}
\left\|f(\xx)-\sum_{n=0}^{N-1} f_{\aa,n}(\xx)\xx^{n\aa}\right\|=&\left\|f(\xx)-\text{App}_{N\aa}(\hat{f})(\xx)\right\|\\
=&\left\|\int_0^{x_1}\cdots\int_0^{x_d} \frac{(x_1-w_1)^{N\a_1-1}}{(N\a_1-1)!}\cdots\frac{(x_d-w_d)^{N\a_d-1}}{(N\a_d-1)!}\frac{\d^{N\aa}f}{\d\xx^{N\aa}}(\boldsymbol{w}) d\boldsymbol{w}\right\|\\
\leq&\sup_{\boldsymbol{w}\in\Pi'_{\aa}} \left\|\frac{1}{(N\aa)!}\frac{\d^{N\aa}f}{\d\xx^{N\aa}}(\boldsymbol{w})\right\||\xx^{N\aa}|\leq CA^N N!^s|\xx^{N\aa}|,
\end{align*} and then we can conclude the desired bounds. Note that the paths of integration are the segments $w_j=\rho_jx_j$, $0< \rho_j\leq 1$, and $\boldsymbol{w}=(\rho_1 x_1,\dots,\rho_d x_d)\in \Pi'_\aa$ if $\xx\in\Pi'_\aa$.

Finally, to show that (1) implies (3), take $\xx\in \Pi_\aa'\subset \Pi_\a''\subset\Pi_\a$, where   $\Pi_\aa'=\Pi_\aa'(a',b',\rho)$ and  $\Pi_\a''=\Pi_\a(a'',b'',\rho)$. Using Remark \ref{Polysectors}, we can find a polysector $P=\prod_{j=1}^d V_j$, with $V_j$ of opening $(b'-a')/(d-1)\a_j$, for $j=1,\dots,d-1$, and $b''-a''-(b'-a')$ for $j=d$, such that $\xx\in P\subset \Pi_\aa''$. Note that these openings are independent of the point $\xx\in \Pi_\aa'$. Thus we can find numbers $\sigma_1,\dots,\sigma_d>0$ depending only on $\Pi_\aa'$ and $\Pi_\aa''$ such that the circles $C_j$ given by $|w_j-x_j|=\sigma_j|x_j|$ are contained in $V_j$, for all $j=1,\dots,d$. Using Cauchy's formula and Remark \ref{On the other inqs for mon asyp} we find that \begin{align*}
\left\|\frac{1}{\bb!}\frac{\d^{\bb}f}{\d\xx^{\bb}}(\xx)\right\|&=\left\|\frac{1}{\bb!}\frac{\d^{\bb}}{\d\xx^{\bb}}\left(f-\text{App}_{\bb}(\hat{f})\right)(\xx)\right\|\\
&=\left\|\frac{1}{(2\pi i)^d}\int_{C_1}\cdots \int_{C_d} \frac{f(\boldsymbol{w})-\text{App}_{\bb}(\hat{f})(\boldsymbol{w})}{(w_1-x_1)^{\beta_1+1}\cdots (w_d-x_d)^{\beta_d+1}}dw_d\cdots dw_1\right\|\\
&\leq \left(\frac{\sigma_1+1}{\sigma_1}\right)^{\beta_1+1}\cdots\left(\frac{\sigma_d+1}{\sigma_d}\right)^{\beta_d+1} \widetilde{C}(\Pi_\aa'')\widetilde{A}(\Pi_\aa'')^{|\bb|}\cdot \max_{1\leq j\leq d} \beta_j!^{s/\a_j},
\end{align*} for some constants $\widetilde{C}(\Pi_\aa''), \widetilde{A}(\Pi_\aa'')>0$ independent of $\bb$. Then (3) follows.\end{proof}


\begin{nota}The previous theorem shows that if $f\sim^{\aa}\hat{f}\in \mathcal{O}'_d(E)$ on $\Pi_{\aa}$,  then $f$ also has $\hat{f}$ as strong asymptotic expansion in Majima's sense, i.e., $f$ is \textit{strongly asymptotically developable as $\xx\rightarrow 0$} in any polysector properly contained in $\Pi_{\aa}$. Moreover, if  $f\sim^{\aa}_s\hat{f}$ on $\Pi_{\aa}$, then the strong asymptotic expansion is $(s/\a_1,\dots,s/\a_d)$--Gevrey, since $\max_{1\leq j\leq d }\, \beta_j!^{s/\a_j} \leq \beta_1!^{s/\a_1}\cdots\beta_d!^{s/\a_d}$, see e.g.,  \cite[Def. 3]{Haraoka}.	
\end{nota}

When we fix some of the variables in the monomial asymptotic expansion of a map, the expansion still holds in the remaining variables. We state this result when we only fix one variable. The proof is an immediate consequence of Proposition \ref{equiv f iff f_n for x^pe^q}.

\begin{prop}\label{summ fix a variable} Consider  $f\in\mathcal{O}(\Pi_{(\boldsymbol{\a},p)},E)$, $\Pi_{(\boldsymbol{\a},p)}=\Pi_{(\boldsymbol{\a},p)}(a,b,r)$, and $\hat{f}\in \hat{\mathcal{O}}_{d+1}'(E)$ such that $f\sim^{(\boldsymbol{\a},p)}_s\hat{f}$ on $\Pi_{(\boldsymbol{\a},p)}$. Then there is $r'>0$ such that, for all $z_0\in D_{r'}$, we have $f(\xx,z_0)\sim_s^\aa \hat{f}(\xx,z_0)$ on $\Pi_{\aa}(a', b',r')$, $a'=a-\text{arg}(z_0^p)$, $b'=b-\text{arg}(z_0^p)$.
\end{prop}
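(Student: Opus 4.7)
The plan is to invoke Proposition \ref{equiv f iff f_n for x^pe^q} in both directions, bridging the two asymptotic settings via specialisation of an approximating sequence at $z = z_0$.

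First, applying the forward direction of Proposition \ref{equiv f iff f_n for x^pe^q} to $f \sim^{(\aa, p)}_s \hat{f}$ on $\Pi_{(\aa, p)}$, we obtain $R > 0$, constants $B, D > 0$, and a sequence $(F_N)_{N \in \N} \subset \mathcal{O}_b(D_R^{d+1}, E)$ with $F_0 = 0$ and $\|F_N\|_R \leq D B^N N!^s$, such that for every proper subsector $\Pi' \subset \Pi_{(\aa, p)}$ there are constants $C(\Pi'), A(\Pi') > 0$ with
\[
\|f(\xx, z) - F_N(\xx, z)\| \leq C(\Pi') A(\Pi')^N N!^s |\xx^{N\aa}|\,|z|^{Np}, \quad (\xx, z) \in \Pi' \cap D_R^{d+1}.
\]
Moreover, $\hat{f}$ is the $\mathfrak{m}$-topological limit of the Taylor series at the origin of the $F_N$.

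Next, set $r' = \min\{R, r^{1/p}\}$, fix $z_0 \in D_{r'} \setminus \{0\}$, and define $G_N(\xx) := F_N(\xx, z_0)$. Then $G_N \in \mathcal{O}_b(D_R^d, E)$ with $\|G_N\|_R \leq \|F_N\|_R \leq DB^N N!^s$, giving the Gevrey norm bound required by the converse of Proposition \ref{equiv f iff f_n for x^pe^q}. Write $\theta_0 = \text{arg}(z_0^p)$ and set $a' = a - \theta_0$, $b' = b - \theta_0$. For any proper subsector $\Pi'_\aa = \Pi_\aa(a_1, b_1, \rho) \subset \Pi_\aa(a', b', r')$, the map $\xx \mapsto (\xx, z_0)$ embeds $\Pi'_\aa$ into $\Pi_{(\aa, p)}(a_1 + \theta_0, b_1 + \theta_0, \max\{\rho, |z_0|^p\})$, which is a proper subsector of $\Pi_{(\aa, p)}(a, b, r)$ by the choice of $r'$. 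Specialising the preceding estimate to this subsector yields
\[
\|f(\xx, z_0) - G_N(\xx)\| \leq C(\Pi')\bigl(A(\Pi')\,|z_0|^p\bigr)^N N!^s\, |\xx^{N\aa}|
\]
on $\Pi'_\aa$, which is of the Gevrey form required by Proposition \ref{equiv f iff f_n for x^pe^q}.

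Applying the converse direction of that proposition to the sequence $(G_N)$ on $\Pi_\aa(a', b', r')$, we conclude $f(\cdot, z_0) \sim^\aa_s \varphi_{z_0}$ there, where $\varphi_{z_0} \in \hat{\mathcal{O}}'_d(E)$ is the $\mathfrak{m}$-topological limit of the Taylor series at the origin of the $G_N$. The identification $\varphi_{z_0} = \hat{f}(\xx, z_0)$ follows because specialising $z = z_0$ in the Taylor expansions of $F_N$ produces the Taylor expansions of $G_N$ at the origin in $\xx$, and this specialisation is continuous in the $\mathfrak{m}$-filtered setting where both limits are taken. The main, though minor, delicate point is ensuring that $\{(\xx, z_0) : \xx \in \Pi'_\aa\}$ lies strictly inside a proper subsector of $\Pi_{(\aa, p)}(a, b, r)$; this is the reason for taking $r' = \min\{R, r^{1/p}\}$ and shifting the bisecting direction by $\theta_0$. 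No further analysis is needed.
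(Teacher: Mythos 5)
Your proof follows the paper's own route: Proposition \ref{summ fix a variable} is obtained there precisely as an immediate consequence of Proposition \ref{equiv f iff f_n for x^pe^q}, by specializing an approximating sequence at $z=z_0$ and rotating the sector by $\arg(z_0^p)$, which is exactly what you do. Two small points need repair. First, your radius $r'=\min\{R,r^{1/p}\}$ may exceed $r$ (when $r<1$), in which case a subsector $\Pi_{\aa}(a_1,b_1,\rho)$ with $r<\rho<r'$ is \emph{not} sent by $\xx\mapsto(\xx,z_0)$ into a proper subsector of $\Pi_{(\aa,p)}(a,b,r)$, contrary to what you assert; taking $r'\leq\min\{R,r,r^{1/p}\}$ (or shrinking the polydisc in condition (2) of Proposition \ref{equiv f iff f_n for x^pe^q}, which that statement permits) removes the issue. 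Second, the identification $\varphi_{z_0}=\hat{f}(\xx,z_0)$ should not be justified by ``continuity of specialisation in the $\mathfrak{m}$-filtered setting'': evaluation at a nonzero $z_0$ is not continuous for the $\mathfrak{m}$-adic topology, and $\mathfrak{m}$-adic convergence of the Taylor series of the $F_N$ only stabilizes each coefficient, with no uniform control of the tails in $z$, so the claimed passage to the limit is not a proof as stated. The identification is nonetheless immediate from the coefficient formulas (\ref{limits x-0 general}): for each $\bb\in\N^d$ the coefficient of $\xx^{\bb}$ in the asymptotic series of $f(\cdot,z_0)$ equals $\lim_{\xx\to 0}\frac{1}{\bb!}\frac{\d^{\bb}f}{\d\xx^{\bb}}(\xx,z_0)=\hat{f}_{J,\bb}(z_0)$ with $J=[1,d]$, because the slice $\{(\xx,z_0)\}$ stays inside a proper subsector of $\Pi_{(\aa,p)}$. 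With these two repairs your argument is complete and coincides with the paper's.
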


It is straightforward to characterize maps with null $s$--Gevrey asymptotic expansion in a monomial: $f\sim^\aa_s 0$ on $\Pi_\aa$ if and only if for every subsector $\Pi_\aa'\subset \Pi_\aa$, there are constants $C,A>0$ such that $$\|f(\xx)\|\leq C\exp(-1/A|\xx^\aa|^{1/s}),\quad \xx\in\Pi_\aa'.$$ In this framework Watson's lemma also holds: if $f\sim^\aa_s \hat{0}$ on $\Pi_\aa(a,b,r)$ and $b-a>s\pi$, then $f\equiv 0$. As in the case of one variable  it is natural to consider the following definition.

\begin{defi}Let $\hat{f}\in \hat{\mathcal{O}}_{d}'(E)$, $k>0$ and $\theta\in\R$ be a direction.
	\begin{enumerate}
		\item The series $\hat{f}$ is called \textit{$\xx^{\aa}$--$k$--summable  on $S_{\boldsymbol{\a}}=S_{\boldsymbol{\a}}(\theta,b-a,r)$} with sum $f\in\mathcal{O}(S_{\aa},E)$ if $b-a>\pi/k$ and $f\sim_{1/k}^{\boldsymbol{\a}} \hat{f}$ on $S_{\boldsymbol{\a}}$. We also say that $\hat{f}$ is \textit{$\xx^{\aa}$--$k$--summable in the direction $\theta$}. The space of $\xx^{\aa}$--$k$--summable series in the direction $\theta$ will be denoted by $E\{\boldsymbol{x}\}^{\boldsymbol{\a}}_{1/k,\theta}$.
		\item The series $\hat{f}$ is called \textit{$\xx^{\aa}$--$k$--summable} if it is $\xx^{\aa}$--$k$--summable in all directions, up to a finite number of them mod. $2\pi$ (the singular directions). The corresponding space is denoted by $E\{\boldsymbol{x}\}^{\boldsymbol{\a}}_{1/k}$.
\end{enumerate}
	
Note that both $E\{\boldsymbol{x}\}^{\boldsymbol{\a}}_{1/k,\theta}$ and $E\{\boldsymbol{x}\}^{\boldsymbol{\a}}_{1/k}$ are vector spaces, stable by partial derivatives, and they inherit naturally a structure of algebra when $E$ is a Banach algebra.
\end{defi}

\begin{nota}\label{On rank reduction} Given $\aa\in (\N^+)^d$, we note that formulas (\ref{Decomposition for ramifications}) and (\ref{Equation for f_beta}) can also be applied to formal power series. In particular, $\hat{f}\in \hat{\mathcal{O}}_{d}'(E)$ and $\hat{f}(\xx)=\sum_{\00\leq \bb<\aa} \xx^{\bb} \hat{f}_{\bb}(x_1^{\a_1},\dots,x_d^{\a_d})$, it is straightforward to show that $\hat{f}$ is $\xx^\aa$--$k$--summable in direction $\theta$ if and only if $\hat{f}_{\bb}$ is $\boldsymbol{z}^{\boldsymbol{1}}$--$k$--summable in direction $\theta$, for all $\00\leq \bb<\aa$, where ${\boldsymbol{1}}=(1,\dots,1)$ and $\boldsymbol{z}=(z_1,\dots,z_d)=(x_1^{\a_1},\dots,x_d^{\a_d})$.
\end{nota}

\section{Borel-Laplace analysis for monomial summability}\label{Borel-Laplace analysis for monomial summability}

The goal of this section is to generalize the Borel and Laplace transformations for monomial asymptotic expansions contained in \cite{CM2} to any number of variables, and  develop their main properties. We will prove that monomial summability is equivalent to Borel--summability in this framework. It is worth to remark that, in contrast with the approach in  \cite{CM2}, we have improved these results, since now we can allow some of the weights we use to be zero. This will be crucial in the application we present in Section \ref{Monomial summability of a family of singular perturbed PDEs}.

From now on, if $\boldsymbol{c}\in\R_{\geq0}^d$, we will write $J_{\boldsymbol{c}}:=\{j\in[1,d] \,|\, c_j\neq0\}$ for the set of indexes where $\boldsymbol{c}$ has nonzero entries.

\begin{defi}Consider $\aa\in (\N^+)^d$, $k>0$ and $\boldsymbol{s}\in\overline{\sigma_d}$. The \textit{$\xx^\aa$--$k$--$\boldsymbol{s}$--Borel transform} of a map $f$ is defined by the formula $$\mathcal{B}_{\boldsymbol{\lambda}}(f)(\boldsymbol{\xi})=\frac{(\boldsymbol{\xi}_{J_{\boldsymbol{s}}}^{k\boldsymbol{\a}_{J_{\boldsymbol{s}}}})^{-1}}{2\pi i}\int_\gamma f\left(\xi_1 u^{-\frac{s_1}{\a_1k}},\dots,\xi_d u^{-\frac{s_d}{\a_d k}}\right) e^u du,$$ \noindent where $\gamma$ denotes a Hankel path as we will explain below. Along this section we will write \begin{equation}\label{lambda}\boldsymbol{\lambda}=\left(\frac{s_1}{\a_1k},\dots,\frac{s_d}{\a_dk}\right),\qquad \boldsymbol{\lambda'}=\left(\frac{\a_1 k}{s_1},\dots,\frac{\a_d k}{s_d}\right).\end{equation} If $s_j=0$ for some $j$, we interpret the $j$th entry of $\boldsymbol{\lambda'}$ as $0$, and the variable $x_j$ remains unchanged. In this situation, to avoid cumbersome notation, we call it $\xi_j$ nevertheless. This convention will be used further on without explicit mention. Note that the factor outside the integral only includes the variables $\xi_j$ such that $s_j\neq0$. The ambient space $\C^d$ with coordinates $\xxi$ will be referred to as the \textit{$\xxi$--Borel plane}. 
\end{defi}

Since we admit that some weights can be zero, it is necessary to consider monomial sectors where some of the variables are bounded. Thus if
$f\in\mathcal{O}_b(S_{\boldsymbol{\a}},E)$,  $S_{\boldsymbol{\a}}=S_{\boldsymbol{\a}}(\theta, \pi/k+2\epsilon, R_0)$, $0<2\epsilon<\pi/k$, then $\mathcal{B}_{\boldsymbol{\lambda}}(f)$ will be defined and holomorphic on $$S_\aa^{\boldsymbol{s}}(\theta,2\epsilon,R_0):=S_\aa(\theta,2\epsilon)\cap \{\xxi\in \C^d \,|\, |\xi_j|^{\a_j}<R_0, j\not\in J_{\boldsymbol{s}}\}.$$ We will use the same notation for these sectors, for every $\boldsymbol{c}\in\R_{\geq0}^d$ other than $\boldsymbol{s}$. Note that if all entries of $\boldsymbol{s}$ are different from zero, then $S_\aa^{\boldsymbol{s}}(\theta,2\epsilon,R_0)$ is simply  $S_\aa(\theta,2\epsilon)$. 

If $\boldsymbol{\xi}\in S_{\aa}^{\boldsymbol{s}}(\theta, 2\epsilon',R_0)$, $0<\epsilon'<\epsilon$, we take $\gamma$ oriented positively and given by the arc of a circle centered at $0$ and radius $$R>\max_{j\in J_{\boldsymbol{s}}}\, (|\xi_j|^{\a_j}/R_0)^{\frac{k}{s_j}},$$ with endpoints on the directions $-\pi/2-k(\epsilon-\epsilon')$ and $\pi/2+k(\epsilon-\epsilon')$, and the half-lines with those directions from this arc to $\infty$. If $u$ goes along this path, the integrand is evaluated on $S_{\boldsymbol{\a}}$ and the integral converges absolutely, since $f$ is bounded and the exponential term tends to $0$ on those directions. The result is independent of $\epsilon'$ and $R$ due to Cauchy's theorem.

Using Hankel's formula for the Gamma function, we obtain the formula \begin{equation*}\label{borel lambda mu}
\mathcal{B}_{\boldsymbol{\lambda}}(\boldsymbol{x}^{\boldsymbol{\mu}})(\boldsymbol{\xi})=\frac{\boldsymbol{\xi}^{\boldsymbol{\mu}}\boldsymbol{\xi}_{J_{\boldsymbol{s}}}^{-k\boldsymbol{\a}_{J_{\boldsymbol{s}}}} }{\Gamma\left(\left<\boldsymbol{\mu,\boldsymbol{\lambda}}\right>\right)},\quad \boldsymbol{\mu}\in\C^d,
\end{equation*} and thus $\hat{\mathcal{B}}_{\boldsymbol{\lambda}}$, the \textit{formal $\xx^\aa$--$k$--$\boldsymbol{s}$--Borel transform}, can be defined for formal power series term by term.

By looking at the derivative with respect to $u$ of the integrand defining $\mathcal{B}_{\boldsymbol{\lambda}}$, it is natural to consider the vector field $X_{\boldsymbol{\lambda}}$ and its flow (at time $z$) $\phi^{\boldsymbol{\lambda}}_z$ given by $$X_{\boldsymbol{\lambda}}:=\frac{\boldsymbol{\xx}_{J_{\boldsymbol{s}}}^{k\boldsymbol{\a}_{J_{\boldsymbol{s}}}}}{k}\left(\frac{s_1}{\a_1}x_1\frac{\d}{\d x_1}+\cdots+\frac{s_d}{\a_d}x_d\frac{\d}{\d x_d}\right), \quad \varphi_z^{\boldsymbol{\lambda}}(\boldsymbol{x})=\sum_{j=1}^d \frac{x_j}{(1-z\boldsymbol{\xx}_{J_{\boldsymbol{s}}}^{k\boldsymbol{\a}_{J_{\boldsymbol{s}}}})^{s_j/\a_j k}}\boldsymbol{e}_j.$$

If $f\in\mathcal{O}_b(S_{\boldsymbol{\a}},E)$ is as before, it follows that
\begin{equation}
\label{Borel s1 s2 k y derivadas}
\mathcal{B}_{\boldsymbol{\lambda}}(X_{\boldsymbol{\lambda}}(f))(\boldsymbol{\xi})=\boldsymbol{\xxi}_{J_{\boldsymbol{s}}}^{k\boldsymbol{\a}_{J_{\boldsymbol{s}}}}\mathcal{B}_{\boldsymbol{\lambda}}(f)(\boldsymbol{\xi}), \quad
\mathcal{B}_{\boldsymbol{\lambda}}(f\circ\phi_z^{\boldsymbol{\lambda}})(\boldsymbol{\xi}):=\exp(z\boldsymbol{\xxi}_{J_{\boldsymbol{s}}}^{k\boldsymbol{\a}_{J_{\boldsymbol{s}}}})\mathcal{B}_{\boldsymbol{\lambda}}(f)(\boldsymbol{\xi}).
\end{equation}

Both formulas are naturally related since the first one is the linearization of the second one at $z=0$. In the variable $t=\xx^\aa$, the vector field $X_{\boldsymbol{\lambda}}$ reduces to $\frac{t^{k+1}}{k}\frac{\d}{\d t}$, and the first formula is just the one contained in the isomorphism (\ref{Iso structres 1 variable}).

We will say that $f$ has \textit{exponential growth of order at most $\boldsymbol{c}\in\R_{\geq0}^d$ on   $S_\aa^{\boldsymbol{c}}(\theta,b-a,R)$} if for every subsector  $S_\aa^{\boldsymbol{c}}(\theta',b'-a',R')$ ($a<a'<b'<b$, $R'<R$) there are  constants $C,M>0$ such that \begin{equation}\label{exp growth k, s_1 s_2}
\|f(\boldsymbol{\xi})\|\leq C\exp\left(M R_{\boldsymbol{c}}(\xxi)\right),\quad R_{\boldsymbol{c}}(\xxi):=\max_{j\in J_{\boldsymbol{c}}} |\xi_j|^{c_j},
\end{equation} on $S_\aa^{\boldsymbol{c}}(\theta',b'-a',R')$. Note we can also work with the term $\sum_{j\in J_{\boldsymbol{c}}} |\xi_j|^{c_j}$ in this bound, since $R_{\boldsymbol{c}}(\xxi)\leq \sum_{j\in J_{\boldsymbol{c}}} |\xi_j|^{c_j} \leq dR_{\boldsymbol{c}}(\xxi)$.

If $f$ is holomorphic on $\{\xxi\in\C^d \,|\, |\xi_j|<r_j, \, j\not\in J_{\boldsymbol{c}}\}$, for some fixed $r_j>0$ (resp. entire if $\boldsymbol{c}\in\R_{>0}^d$), and $\sum_{\bb\in\N^d} a_{\boldsymbol{\beta}}\boldsymbol{x}^{\boldsymbol{\beta}}$ is its Taylor series at the origin, condition  (\ref{exp growth k, s_1 s_2}) is equivalent to the existence of constants $D,B_1,\dots,B_d>0$ such that $$\|a_{\boldsymbol{\beta}}\|\leq \frac{D B_1^{\beta_1}\cdots B_d^{\beta_d}}{\Gamma\left(1+\sum_{j\in J_{\boldsymbol{c}}}\frac{\beta_j}{c_j}\right)},\quad  \text{ for all } \boldsymbol{\beta}\in\N^d.$$ This statement is standard and it can be deduced from Cauchy's integral formulas for the coefficients, Stirling's formula, and the inequalities \begin{equation}\label{inq Gamma}
\Gamma(1+a)\Gamma(1+b)\leq\Gamma(1+a+b)\leq 2^{a+b}\Gamma(1+a)\Gamma(1+b),\quad a,b>0,
\end{equation} satisfied by the Gamma function.

\begin{nota}\label{Borel an T_pq} Consider $\hat{f}\in \hat{\mathcal{O}}'_d(E)$, with $\hat{T}_{\aa}(\hat{f})=\sum f_{\aa,n} t^n$, and $\boldsymbol{\lambda}$ as in (\ref{lambda}). If we write  $\hat{\varphi}_{\boldsymbol{\lambda}}=\hat{\mathcal{B}}_{\boldsymbol{\lambda}}\left(\hat{f}-\sum_{k\aa_{J_{\boldsymbol{s}}}\not\leq \bb_{J_{\boldsymbol{s}}}} a_\bb \xx^\bb\right)$ and $\hat{T}_{\boldsymbol{\a}}(\boldsymbol{\xxi}_{J_{\boldsymbol{s}}}^{k\boldsymbol{\a}_{J_{\boldsymbol{s}}}}\hat{\varphi}_{\boldsymbol{\lambda}})=\sum \varphi_{\aa,n} \tau^n$, then \begin{equation}\label{varphi aa n}
\varphi_{\aa,n}(\xxi)=\sum_{\aa\not\leq \bb,\,  k\aa_{J_{\boldsymbol{s}}}\leq \bb_{J_{\boldsymbol{s}}}} \frac{a_{n\aa+\bb}}{\Gamma\left(\frac{n}{k}+\left<\bb,\boldsymbol{\lambda}\right>\right)} \xxi^{\bb}.
\end{equation} If $n\geq k$, then $k\a_l\leq n\a_l\leq n\a_l+\beta_l$, for all $l$. In particular, the condition $k\aa_{J_{\boldsymbol{s}}}\leq \bb_{J_{\boldsymbol{s}}}$ in the previous sum is satisfied and we can conclude that  $f_{\aa,n}$ and $\varphi_{\aa,n}$ are related by the formula $ \xxi_{J_{\boldsymbol{s}}}^{-k\aa_{J_{\boldsymbol{s}}}}\xxi^{n\aa}\varphi_{\aa,n}=\mathcal{B}_{\boldsymbol{\lambda}}(\boldsymbol{x}^{n\boldsymbol{\a}}f_{\aa,n})$, $n\geq k$. Since we can find $\rho>0$ such that $f_{\aa,n}\in \mathcal{O}(D_\rho^d,E)$, for all $n\in\N$, we see that the $\varphi_{\aa,n}$ are holomorphic maps on $ \{\xxi\in\C^d \,|\, |\xi_j|<\rho, j\not\in J_{\boldsymbol{s}}\}$, and a direct estimate using the expansion (\ref{varphi aa n}), shows that they satisfy bounds of type \begin{equation*}\label{growth of phi_m}
	\|\varphi_{\aa,n}(\boldsymbol{\xi})\|\leq \frac{L\|f_{\aa,n}\|_\rho}{\Gamma\left(1+\frac{n}{k}\right)}\exp\left(M R_{\boldsymbol{\lambda'}}(\xxi)\right),
	\end{equation*} where $L, M>0$ are some constants independent of $n$ but depending on $\rho$. 
\end{nota}

The behavior of the Borel transform with respect to monomial asymptotic expansions is presented in the next proposition. It is based on estimates included in \cite{Sanz} for the Borel transform in several variables.

\begin{prop}\label{exp growth of Borel k, s1 s2} Assume $f\sim_{s}^{\boldsymbol{\a}}\hat{f}$ on $S_{\boldsymbol{\a}}(\theta,\pi/k+2\epsilon, R_0)$, where  $0<2\epsilon<\pi/k$ and $\hat{f}=\sum_{k\aa_{J_{\boldsymbol{s}}}\leq \bb_{J_{\boldsymbol{s}}}} a_\bb \xx^\bb$. If $s>0$, $\boldsymbol{s}\in\overline{\sigma_d}$ and  $\boldsymbol{\lambda}, \boldsymbol{\lambda}'$ are given by (\ref{lambda}), the following statements are verified:

\begin{enumerate}
	\item If $g=\mathcal{B}_{\boldsymbol{\lambda}}(f)$ and  $\hat{g}=\hat{\mathcal{B}}_{\boldsymbol{\lambda}}(\hat{f})$, then  $\boldsymbol{\xxi}_{J_{\boldsymbol{s}}}^{k\boldsymbol{\a}_{J_{\boldsymbol{s}}}}g\sim^{\boldsymbol{\a}}_{s'} \boldsymbol{\xxi}_{J_{\boldsymbol{s}}}^{k\boldsymbol{\a}_{J_{\boldsymbol{s}}}}\hat{g}$ on $S_{\boldsymbol{\a}}^{\boldsymbol{s}}(\theta,2\epsilon,R_0)$, where $s'=\max\left\{s-\frac{1}{k},0\right\}$.
	
	\item For every unbounded subsector $S_{\boldsymbol{\a}}''$ of $S_{\boldsymbol{\a}}^{\boldsymbol{s}}(\theta,2\epsilon,R_0)$ there are $B,D,M>0$ such that $$\left\|g(\boldsymbol{\xi})-\sum_{n=0}^{N-1} g_{\aa,n}(\boldsymbol{\xi}) \boldsymbol{\xi}^{n\boldsymbol{\a}}\right\|\leq DB^N\Gamma(1+Ns')|\boldsymbol{\xi}^{N\boldsymbol{\a}}| |\boldsymbol{\xxi}_{J_{\boldsymbol{s}}}^{-k\boldsymbol{\a}_{J_{\boldsymbol{s}}}}| \exp\left(MR_{\boldsymbol{\lambda'}}(\xxi)\right)\text{ on }S_{\boldsymbol{\a}}'',$$ where $\hat{T}_{\boldsymbol{\a}}(\boldsymbol{\xxi}_{J_{\boldsymbol{s}}}^{k\boldsymbol{\a}_{J_{\boldsymbol{s}}}}\hat{g})=\sum (\boldsymbol{\xxi}_{J_{\boldsymbol{s}}}^{k\boldsymbol{\a}_{J_{\boldsymbol{s}}}}g_{\aa, n}) t^n$. If $N=0$, this inequality means that $\boldsymbol{\xxi}_{J_{\boldsymbol{s}}}^{k\boldsymbol{\a}_{J_{\boldsymbol{s}}}}g$ has exponential growth of order at most  $\boldsymbol{\lambda'}$ on $S^{\boldsymbol{s}}_{\boldsymbol{\a}}(\theta,2\epsilon,R_0)$.
\end{enumerate}
\end{prop}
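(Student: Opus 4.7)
The plan is to establish (2) first by a direct contour estimate and then deduce (1) from it via Proposition \ref{equiv f iff f_n for x^pe^q}. The starting point is Remark \ref{On the other inqs for mon asyp} applied to $f\sim_s^{\aa}\hat{f}$: for every proper subsector of $S_{\aa}(\theta,\pi/k+2\epsilon,R_0)$ there exist constants $C,A>0$ with $\|f(\xx) - F_N(\xx)\| \le CA^{N}N!^{s}|\xx^{N\aa}|$, where $F_N(\xx) := \sum_{n<N} f_{\aa,n}(\xx)\,\xx^{n\aa}$. Applying $\mathcal{B}_{\boldsymbol{\lambda}}$ term-by-term to the convergent sum $F_N$ and using the identity $\langle n\aa+\bb,\boldsymbol{\lambda}\rangle = n/k + \langle\bb,\boldsymbol{\lambda}\rangle$ (a consequence of $\sum_{j\in J_{\boldsymbol{s}}}s_j = 1$), a direct bookkeeping exercise matches $\mathcal{B}_{\boldsymbol{\lambda}}(F_N)(\xxi)$ with the truncation $\sum_{n<N} g_{\aa,n}(\xxi)\,\xxi^{n\aa}$ appearing in (2); here the hypothesis $\hat{f}=\sum_{k\aa_{J_{\boldsymbol{s}}}\le\bb_{J_{\boldsymbol{s}}}} a_\bb\xx^\bb$ is exactly what keeps every $\Gamma$-factor off its poles. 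Hence the quantity to control is $g(\xxi)-\sum_{n<N} g_{\aa,n}(\xxi)\xxi^{n\aa}=\mathcal{B}_{\boldsymbol{\lambda}}(f-F_N)(\xxi) = \frac{\xxi_{J_{\boldsymbol{s}}}^{-k\aa_{J_{\boldsymbol{s}}}}}{2\pi i}\int_\gamma (f-F_N)(\boldsymbol{w}(u))\,e^u\,du$, where $\boldsymbol{w}(u) = (\xi_1 u^{-\lambda_1},\ldots,\xi_d u^{-\lambda_d})$.

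Given $\xxi\in S_{\aa}''$, I would take $\gamma$ to be the usual Hankel contour of radius $R>c_0 R_{\boldsymbol{\lambda}'}(\xxi)$ with $c_0:=\max_{j\in J_{\boldsymbol{s}}}R_0^{-k/s_j}$: this guarantees $|w_j|^{\a_j}=|\xi_j|^{\a_j}|u|^{-s_j/k}<R_0$ for $j\in J_{\boldsymbol{s}}$, while for $j\notin J_{\boldsymbol{s}}$ one has $w_j=\xi_j$ and the condition $|\xi_j|^{\a_j}<R_0$ built into $S_{\aa}^{\boldsymbol{s}}$ takes care of those coordinates automatically; a parallel argument bound keeps $\arg(\boldsymbol{w}^\aa)$ inside $(\theta-\pi/(2k)-\epsilon,\theta+\pi/(2k)+\epsilon)$, so that $\boldsymbol{w}(u)$ remains in the domain of $f$. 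The key identity $\boldsymbol{w}^\aa=\xxi^\aa u^{-1/k}$ (again via $\sum_{j\in J_{\boldsymbol{s}}}s_j=1$) gives $|\boldsymbol{w}^{N\aa}|=|\xxi^{N\aa}|\,|u|^{-N/k}$, and the Gevrey bound on $f-F_N$ reduces matters to the scalar integral $\int_\gamma |u|^{-N/k}|e^u|\,|du|$. Its arc portion is bounded by $2\pi R^{1-N/k}e^R$, the two ray portions by an exponentially decaying tail; choosing $R=c_0 R_{\boldsymbol{\lambda}'}(\xxi)+r_0 N$ for a suitable fixed $r_0>0$ and invoking Stirling on $N!^{s}R^{1-N/k}e^{R}$ collapses the factorials correctly, producing the asserted bound $DB^N\Gamma(1+Ns')|\xxi^{N\aa}||\xxi_{J_{\boldsymbol{s}}}^{-k\aa_{J_{\boldsymbol{s}}}}|\exp(MR_{\boldsymbol{\lambda}'}(\xxi))$ with $s'=\max\{s-1/k,0\}$. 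The case $N=0$ degenerates to the exponential-growth assertion at the end of (2).

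For (1), multiplying the bound of (2) by $|\xxi_{J_{\boldsymbol{s}}}^{k\aa_{J_{\boldsymbol{s}}}}|$ yields an $s'$-Gevrey remainder estimate for $h:=\xxi_{J_{\boldsymbol{s}}}^{k\aa_{J_{\boldsymbol{s}}}}g$ with approximants $H_N:=\sum_{n<N}\varphi_{\aa,n}\,\xxi^{n\aa}$, where $\varphi_{\aa,n}=\xxi_{J_{\boldsymbol{s}}}^{k\aa_{J_{\boldsymbol{s}}}}g_{\aa,n}$ is the object from Remark \ref{Borel an T_pq}; on any \emph{bounded} subsector of $S_{\aa}^{\boldsymbol{s}}(\theta,2\epsilon,R_0)$ the factor $\exp(MR_{\boldsymbol{\lambda}'}(\xxi))$ is harmless. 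The bounds in Remark \ref{Borel an T_pq} together with $\|f_{\aa,n}\|_\rho\le DB^n n!^s$ and the inequality $n!^s/\Gamma(1+n/k)\le \tilde{B}^n n!^{s'}$ (Stirling combined with (\ref{inq Gamma})) give $\|\varphi_{\aa,n}\|_\rho\le \tilde{D}\tilde{B}^n n!^{s'}$. Proposition \ref{equiv f iff f_n for x^pe^q} then applies to the sequence $(H_N)$ and delivers $h\sim_{s'}^{\aa}\xxi_{J_{\boldsymbol{s}}}^{k\aa_{J_{\boldsymbol{s}}}}\hat{g}$ on the full $S_{\aa}^{\boldsymbol{s}}(\theta,2\epsilon,R_0)$.

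\textbf{Main obstacle.} The delicate point, and the only place where the improvement over \cite{CM2} actually matters, is accommodating the variables with $s_j=0$: these are left untouched by $\mathcal{B}_{\boldsymbol{\lambda}}$, which forces the Borel image to live only on the mixed sector $S_{\aa}^{\boldsymbol{s}}$ rather than on the full $S_{\aa}$, and requires the deformation $\boldsymbol{w}(u)$ to keep those coordinates frozen at $\xi_j$. Verifying that $\boldsymbol{w}(u)\in S_{\aa}(\theta,\pi/k+2\epsilon,R_0)$ for every $u\in\gamma$, both in argument and in radius, is what dictates the precise threshold $R>c_0 R_{\boldsymbol{\lambda}'}(\xxi)$ and is where the $\xxi_{J_{\boldsymbol{s}}}^{-k\aa_{J_{\boldsymbol{s}}}}$ factor in the final bound originates.
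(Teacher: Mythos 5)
Your proposal is correct and follows essentially the same route as the paper: both reduce everything to statement (2), estimate $\mathcal{B}_{\boldsymbol{\lambda}}\bigl(f-\sum_{n<N}f_{\aa,n}\xx^{n\aa}\bigr)$ over the Hankel contour using the $s$--Gevrey remainder bound together with the relation between $f_{\aa,n}$ and $g_{\aa,n}$ from Remark \ref{Borel an T_pq}, and then optimize the arc radius via Stirling and the inequalities (\ref{inq Gamma}); your single choice $R=c_0R_{\boldsymbol{\lambda'}}(\xxi)+r_0N$ plays exactly the role of the paper's two-regime choice ($R=N/k$ on the bounded part of the sector, an $\xxi$--dependent, coordinate-by-coordinate radius on the unbounded part), and yields the same factors $\Gamma(1+Ns')$ and $\exp\left(MR_{\boldsymbol{\lambda'}}(\xxi)\right)$. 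The only adjustment needed is to define $c_0$ (and the admissible arguments) with a strict margin, e.g. replacing $R_0$ by $R_0/2$ and $\epsilon$ by some $\epsilon''<\epsilon$ as the paper does, so that the deformed points $\boldsymbol{w}(u)$ stay in one fixed proper subsector on which the constants $C,A$ of the asymptotic expansion are uniform.
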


\begin{proof}It is sufficient to prove (2). Thus, we have to establish those bounds for sectors of the form $S_{\aa}'=S_{\aa}^{\boldsymbol{s}}(\theta,2\epsilon',R_0)$ with $0<\epsilon'<\epsilon$. The proof relies on choosing adequately the radius of the arc of the path $\gamma$ in the definition. Write $\gamma=\gamma_1+\gamma_2-\gamma_3$ where $\gamma_1, \gamma_3$ denote the half-lines and $\gamma_2$ denotes the circular part, parameterized by $\gamma_2(\phi)=Re^{i\phi}$, $|\phi|\leq\pi/2+k(\epsilon''-\epsilon')/2$, where $0<\epsilon'<\epsilon''<\epsilon$ and $R$ will be chosen so that $\left(\xi_1 u^{-\frac{s_1}{\a_1k}},\dots,\xi_d u^{-\frac{s_d}{\a_d k}}\right)\in S_{\aa}=S_{\aa}(\theta,\pi/k+2\epsilon'', R_0/2)$, for all $u$ on $\gamma$ and $\xxi\in S_{\aa}'$.
	
We may assume that $\hat{T}_\aa(\hat{f})=\sum f_{\aa,n} t^n$, with $f_{\aa,n}\in \mathcal{O}_b(D_{R_0}^d,E)$ by reducing $R_0$ if necessary. By hypothesis, inequality (\ref{formula def asym x^pe^q}) holds with $C_N=CA^N\Gamma(1+Ns)$ on $S_{\boldsymbol{\a}}$. Setting  $a=\sin\left(\frac{k}{2}(\epsilon''-\epsilon')\right)>0$,  taking $R>1$ to be chosen, and by using the relation between $f_{\aa,n}$ and $g_{\aa,n}$ explained in Remark \ref{Borel an T_pq}, a direct estimate shows that \begin{align}
	\left\|g(\boldsymbol{\xi})-\sum_{n=0}^{N-1}g_{\aa,n}(\boldsymbol{\xi})\boldsymbol{\xi}^{n\boldsymbol{\a}}\right\|&\leq
	\frac{C}{a}A^N\Gamma(1+Ns)\frac{|\boldsymbol{\xi}^{N\boldsymbol{\a}}| |\boldsymbol{\xxi}_{J_{\boldsymbol{s}}}^{-k\boldsymbol{\a}_{J_{\boldsymbol{s}}}}|}{R^{N/k-1}}\left(\frac{e^{-aR}}{R}+e^R\right)\nonumber \\
	\label{proof Bks_1s_2 2}&\leq
	\frac{2C}{a}A^N\Gamma(1+Ns)|\boldsymbol{\xi}^{N\boldsymbol{\a}}| |\boldsymbol{\xxi}_{J_{\boldsymbol{s}}}^{-k\boldsymbol{\a}_{J_{\boldsymbol{s}}}}|\frac{e^R}{R^{N/k-1}}
	\qquad \text{ on } S_{\boldsymbol{\a}}',
	\end{align} for all $N\in\N$. For $N=0$ we are denoting $C=\sup_{\boldsymbol{x}\in S_{\boldsymbol{\a}}}\|f(\boldsymbol{x})\|$ (note that $f$ is bounded here, as we have reduced the radius and the opening of the sector).
	
	To prove (2) we divide our sector in two parts. First of all, consider $\boldsymbol{\xi}\in S_{\boldsymbol{\a}}(\theta,2\epsilon',r)$, with $r>R_0/2$ fixed. In this case choose $R\geq \max_{j\in J_{\boldsymbol{s}}}(2r/R_0)^{k/s_j}>1$. Since it is enough to establish the bounds for large $N$ we can suppose $N$ is large enough and take $R=N/k$. Then the bound follows using Stirling's formula, since asymptotically
	$$
	\frac{e^{R}}{R^{N/k-1}} = \frac{N/ke^{N/k}}{\left( N/k \right)^{N/k}} \sim \frac{\sqrt{2\pi} (N/k)^{3/2}}{\Gamma (1+N/k)},
	$$
	and also because $\Gamma(1+Ns)\leq 2^{sN}\Gamma (1+N/k)\Gamma\left(1+N(s-1/k)\right)$ if $s\geq 1/k$, due to the second inequality in (\ref{inq Gamma}).
	
	Now, we establish the bound in the complementary region, i.e., for $\boldsymbol{\xi}\in S_{\boldsymbol{\a}}'\setminus S_{\boldsymbol{\a}}(\theta,2\epsilon',r)$. For each $j\in J_{\boldsymbol{s}}$, choose $R_j<R_0/2$ and let us take $R=R(\xxi)=\max_{j\in J_{\boldsymbol{s}}}\left(|\xi_j|^{\a_j}/R_j\right)^{k/s_j}>1$. Note that $R(\boldsymbol{\xi})\leq \sum_{j\in J_{\boldsymbol{s}}} \left(|\xi_j|^{\a_j}/R_j\right)^{k/s_j}$ and
	$\frac{|\boldsymbol{\xi}_{J_{\boldsymbol{s}}}^{\boldsymbol{\a}_{J_{\boldsymbol{s}}}}|^k}{\prod_{j\in J_{\boldsymbol{s}}} R_j^k}\leq R(\boldsymbol{\xi})$ (second inequality of (\ref{prop the segment})). Then we can bound (\ref{proof Bks_1s_2 2}) by \begin{equation*}\label{last inequatily proof Borel}
	\frac{2C}{a}A^N\Gamma(1+Ns) \left|\xxi_{J^c_{\boldsymbol{s}}}^{N\aa_{J^c_{\boldsymbol{s}}}}\right| \prod_{j\in J_{\boldsymbol{s}}} R_j^{N-k}\exp\left(\left(|\xi_j|^{\a_j}/R_j\right)^{k/s_j}\right).
	\end{equation*}
	
Let $M=\max_{j\in J_{\boldsymbol{s}}}(4/R_0)^{k/s_j}>0$. For each $j\in J_{\boldsymbol{s}}$, we consider two cases: choose $R_j$$=|\xi_j|^{\a_j}/(s_jN/k)^{s_j/k}$ $< R_0/2$ as long as this inequality holds. Then $$	 R_j^{N-k}\exp\left(\left(|\xi_j|^{\a_j}/R_j\right)^{k/s_j}\right)=|\xi_j|^{(N-k)\a_j}\left(\frac{s_j N}{k}\right)^{s_j} \frac{e^{s_jN/k}}{(s_jN/k)^{s_jN/k}}.$$ In the second case, choose $R_j=R_0/4<R_0/2\leq |\xi_j|^{\a_j}/(s_jN/k)^{s_j/k}$ and then $$R_j^{N-k}\exp\left(\left(|\xi_j|^{\a_j}/R_j\right)^{k/s_j}\right)<|\xi_j|^{(N-k)\a_j}\left(\frac{s_j N}{k}\right)^{s_j} \frac{\exp\left(M |\xi_j|^{\a_j k/s_j}\right)}{(s_jN/k)^{s_jN/k}}.$$ In both cases, we conclude that $$R_j^{N-k}\exp\left(\left(|\xi_j|^{\a_j}/R_j\right)^{k/s_j}\right)\leq |\xi_j|^{(N-k)\a_j}\left(\frac{s_j N}{k}\right)^{s_j} \frac{e^{s_jN/k}}{(s_jN/k)^{s_jN/k}}\exp\left(M |\xi_j|^{\a_j k/s_j}\right).$$

Using Stirling's formula we conclude that there are constants $L,K>0$ such that $$\left\|g(\boldsymbol{\xi})-\sum_{n=0}^{N-1}g_{\aa,n}(\boldsymbol{\xi})\boldsymbol{\xi}^{n\boldsymbol{\a}}\right\|\leq LK^N \frac{\Gamma(1+Ns)}{\prod_{j\in J_{\boldsymbol{s}}}\Gamma\left(1+\frac{s_jN}{k}\right)}  |\boldsymbol{\xi}^{N\boldsymbol{\a}}|\,  |\boldsymbol{\xxi}_{J_{\boldsymbol{s}}}^{-k\boldsymbol{\a}_{J_{\boldsymbol{s}}}}| \exp\left(MR_{\boldsymbol{\lambda'}}(\xxi)\right),$$ on $S_{\boldsymbol{\a}}'\setminus S_{\boldsymbol{\a}}(\theta,2\epsilon',r)$. Finally, we can use the second inequality in (\ref{inq Gamma}) to conclude the proof.
\end{proof}

Now we move to the study of the Laplace transform, which will turn out be the inverse of the Borel transformation introduced above.

\begin{defi}Consider $\aa\in (\N^+)^d$, $k>0$ and $\boldsymbol{s}\in\overline{\sigma_d}$. The \textit{$\xx^\aa$--$k$--$\boldsymbol{s}$--Laplace transform  in direction $\phi$}, $|\phi|<\pi/2$, of a map $f$ is defined by the formula $$\mathcal{L}_{\boldsymbol{\lambda},\phi}(f)(\boldsymbol{x})=\boldsymbol{\xx}_{J_{\boldsymbol{s}}}^{k\boldsymbol{\a}_{J_{\boldsymbol{s}}}}\int_{0}^{e^{i\phi}\infty} f\left(x_1 u^{\frac{s_1}{\a_1k}},\dots, x_d u^{\frac{s_d}{\a_dk}}\right) e^{-u} du.$$
\end{defi}

As in the case of $\mathcal{B}_{\boldsymbol{\lambda}}$, if $s_j=0$ for some $j$, the variable $\xi_j$ is not affected by the transformation, although we will still call it $x_j$.

If the map $f\in\mathcal{O}(S_{\boldsymbol{\a}},E)$,  $S_{\boldsymbol{\a}}=S_{\boldsymbol{\a}}(\theta,b-a)$, had exponential growth of order $k$ in the monomial $\boldsymbol{\xi}^{\boldsymbol{\a}}$, i.e., $\|f(\boldsymbol{\xi})\|\leq C\exp(M|\boldsymbol{\xi}^{k\boldsymbol{\a}}|)$ on $S_{\boldsymbol{\a}}$, then by Proposition 3.3 $f$ would be a map depending only on $\boldsymbol{\xi}^{\boldsymbol{\a}}$. Instead, in view of Proposition \ref{exp growth of Borel k, s1 s2} we assume that $f$ is defined and has exponential growth of order at most $\boldsymbol{\lambda'}$ on $S_\aa^{\boldsymbol{s}}(\theta,b-a,R)$. If $f$ satisfies (\ref{exp growth k, s_1 s_2}) with  $\boldsymbol{c}=\boldsymbol{\lambda}'$, $\mathcal{L}_{\boldsymbol{\lambda},\phi}(f)$ converges if $\xx$ satisfies 
$$a-\phi/k<\text{arg}(\boldsymbol{x}^{\boldsymbol{\a}})<b-\phi/k,\quad MR_{\boldsymbol{\lambda'}}(\xx)<\cos\phi, \quad |x_j|^{\a_j}<R, j\not\in J_{\boldsymbol{s}}.$$ The domain in $\C^d$ defined by these conditions will be denoted by $D_{\aa}^{\boldsymbol{s}}(\theta-\phi/k,b-a;M,R)$,  indicating its bisecting direction $(b+a)/2-\phi/k=\theta-\phi/k$ and opening $b-a$. We will also denote by  $$D_{\aa}^{\boldsymbol{s}}(\theta,b-a+\pi/k;M,R):=\bigcup_{|\phi|<\pi/2} D_{\aa}^{\boldsymbol{s}}(\theta-\phi/k,b-a;M,R),$$ with bisecting direction $\theta$ and opening $b-a+\pi/k$. Note that given $a-\pi/2k<a'<b'<b+\pi/2k$, there is $r>0$ such that $S^{\boldsymbol{s}}_{\boldsymbol{\a}}(\theta',b'-a',r)\subset D_{\aa}^{\boldsymbol{s}}(\theta,b-a+\pi/k;M,R)$.

It follows that $\mathcal{L}_{\boldsymbol{\lambda},\phi}(f)$  is holomorphic on $D_{\aa}^{\boldsymbol{s}}(\theta-\phi/k,b-a;M,R)$. Furthermore, if we change direction $\phi$ by $\phi'$, we obtain an analytic continuation of $\mathcal{L}_{\boldsymbol{\lambda},\phi}(f)$ when $|\phi'-\phi|<k(b-a)$, a fact that follows directly from Cauchy's theorem. This process leads to a holomorphic map $\mathcal{L}_{\boldsymbol{\lambda}}(f)$ defined on $D_{\aa}^{\boldsymbol{s}}(\theta,b-a+\pi/k;M,R)$.

An adequate choice of the path $\gamma$ in the definition of $\mathcal{B}_{\boldsymbol{\lambda}}$, a limiting process and the residue theorem, as it is done for the case of one variable, see e.g. \cite[Thm 24, p. 82]{Balser2}, show that if $f\in\mathcal{O}_b(S_{\boldsymbol{\a}}^{\boldsymbol{s}}(\theta,\pi/k+2\epsilon,R_0),E)$, $0<2\epsilon<\pi/k$, then $$\mathcal{L}_{\boldsymbol{\lambda}}\mathcal{B}_{\boldsymbol{\lambda}}(f)=f, \text{ on the intersection of their domains.}$$

The operator  $\mathcal{L}_{\boldsymbol{\lambda}}$ is also injective as the usual Laplace transform. Thus, if $g$ is of exponential growth of order at most $\boldsymbol{\lambda'}$,  then $$\mathcal{B}_{\boldsymbol{\lambda}}\mathcal{L}_{\boldsymbol{\lambda}}(g)=g, \text{ on the intersection of their domains.}$$

We then define $\hat{\mathcal{L}}_{\boldsymbol{\lambda}}$, \textit{the formal $\xx^\aa$--$k$--$\boldsymbol{s}$--Laplace transform}, as the inverse of $\hat{\mathcal{B}}_{\boldsymbol{\lambda}}$. When we write a series as a series in a monomial, it is natural to ask what is the relation between its Laplace transform and the transform of its components. That is the content of next remark.

\begin{nota}\label{nota sobre Laplace formal y analitico}Let $\hat{f}=\sum_{\bb\in\N^d} a_{\boldsymbol{\beta}}\boldsymbol{\xi}^{\boldsymbol{\beta}}\boldsymbol{\xxi}_{J_{\boldsymbol{s}}}^{-k\boldsymbol{\a}_{J_{\boldsymbol{s}}}}$ be a formal power series and $\hat{T}_{\aa}(\boldsymbol{\xxi}_{J_{\boldsymbol{s}}}^{k\boldsymbol{\a}_{J_{\boldsymbol{s}}}}\hat{f})=\sum f_{\aa,n}\tau^n$. A necessary and sufficient condition for  $\hat{\mathcal{L}}_{\boldsymbol{\lambda}}(\hat{f})$ to be convergent is that $\boldsymbol{\xxi}_{J_{\boldsymbol{s}}}^{k\boldsymbol{\a}_{J_{\boldsymbol{s}}}}\hat{f}$ defines a holomorphic map $f$ of exponential growth of order at most $\boldsymbol{\lambda'}$ on $\{\xxi\in\C^d \,|\, |\xi_j|<r_j, j\not\in J_{\boldsymbol{s}}\}$, for some $r_j>0$ (resp. on $\C^d$ if $\boldsymbol{s}$ has all nonzero entries). Then $\mathcal{L}_{\boldsymbol{\lambda}}(f)$ is holomorphic in a polydisc at the origin and $\hat{\mathcal{L}}_{\boldsymbol{\lambda}}(\hat{f})$ is its Taylor series. 
	
Now assume that there are constants $s,B,D,M>0$ such that the family of maps $f_{\aa,n}$ are holomorphic and satisfy the bounds $$\|f_{\aa,n}(\boldsymbol{\xi})\|\leq DB^n\Gamma\left(1+ns\right)\exp\left(MR_{\boldsymbol{\lambda'}}(\xxi)\right),\quad\text{ on } \{\xxi\in\C^d \,|\, |\xi_j|<r_j, j\not\in J_{\boldsymbol{s}}\}.$$ In particular, $\boldsymbol{\xxi}_{J_{\boldsymbol{s}}}^{k\boldsymbol{\a}_{J_{\boldsymbol{s}}}}\hat{f}\in E[[\xxi]]_{s}^{\aa}$. Then all the maps $\mathcal{L}_{\boldsymbol{\lambda}}(\boldsymbol{\xxi}_{J_{\boldsymbol{s}}}^{-k\boldsymbol{\a}_{J_{\boldsymbol{s}}}}f_{\aa,n})$ are holomorphic in a common polydisc centered at the origin. Furthermore, if we write $\hat{T}_{\aa}(\hat{\mathcal{L}}_{\boldsymbol{\lambda}}(\hat{f}))=\sum h_{\aa,n} t^n$, then $f_{\aa,n}$ and $h_{\aa,n}$ are related by the formula  $h_{\aa,n}\xx^{n\aa}=\mathcal{L}_{\boldsymbol{\lambda}}(\xxi_{J_{\boldsymbol{s}}}^{-k\aa_{J_{\boldsymbol{s}}}}\xxi^{n\aa} f_{\aa,n})$. A direct estimate shows that $$\|h_{\aa,n}(\xx)\|\leq DB^n \Gamma(1+ns)\Gamma(n/k) \left(\cos\phi-MR_{\boldsymbol{\lambda'}}(\xx)\right)^{-1},$$ if $MR_{\boldsymbol{\lambda'}}(\xx)<\cos\phi$. Since $\Gamma(n/k)\leq n \Gamma(n/k)=k\Gamma(1+n/k)$, an application of the first inequality in (\ref{inq Gamma}) leads us to conclude that $\hat{\mathcal{L}}_{\boldsymbol{\lambda}}(\hat{f})\in E[[\boldsymbol{x}]]_{s+1/k}^{\boldsymbol{\a}}$. 
	
\end{nota}

The next technical proposition explains the behavior of the Laplace transform with respect to monomial asymptotic expansions. The hypotheses, although restrictive, are natural when compared to Proposition \ref{exp growth of Borel k, s1 s2}.

\begin{prop}\label{Laplace k s_1 s_2 properties} Consider $\boldsymbol{s}\in\overline{\sigma_d}$, $f\in \mathcal{O}(S_{\boldsymbol{\a}}^{\boldsymbol{s}},E)$, $S_{\boldsymbol{\a}}^{\boldsymbol{s}}=S_{\boldsymbol{\a}}^{\boldsymbol{s}}(\theta,b-a,R)$, $\boldsymbol{\xxi}_{J_{\boldsymbol{s}}}^{k\boldsymbol{\a}_{J_{\boldsymbol{s}}}}\hat{f}\in \hat{\mathcal{O}}'_d(E)$ with $\hat{T}_{\aa}(\boldsymbol{\xxi}_{J_{\boldsymbol{s}}}^{k\boldsymbol{\a}_{J_{\boldsymbol{s}}}}\hat{f})=\sum f_{\aa,n} t^n$ and $s\geq0$. Assume that: \begin{enumerate}
\item There are constants $B,D,K>0$ and $0<r<R$ such that $f_{\aa,n}$ are holomorphic and satisfy bounds of type $$\|f_{\aa,n}(\boldsymbol{\xi})\|\leq DB^n\Gamma\left(1+ns\right)\exp\left(K R_{\boldsymbol{\lambda'}}(\xxi)\right),\text{ on } \{\xxi\in\C^d \,|\, |\xi_j|^{\a_j}<r, j\not\in J_{\boldsymbol{s}}\}.$$

\item For every unbounded subsector $S_{\boldsymbol{\a}}'$ of $S_{\boldsymbol{\a}}^{\boldsymbol{s}}$ there are constants $C,A,M>0$ such that \begin{equation*}
		\left\|\boldsymbol{\xxi}_{J_{\boldsymbol{s}}}^{k\boldsymbol{\a}_{J_{\boldsymbol{s}}}}f(\boldsymbol{\xi})-\sum_{n=0}^{N-1} f_{\aa,n}(\boldsymbol{\xi}) \boldsymbol{\xi}^{n\boldsymbol{\a}}\right\|\leq CA^N\Gamma(1+Ns)|\boldsymbol{\xi}^{N\boldsymbol{\a}}|\exp\left(MR_{\boldsymbol{\lambda'}}(\xxi)\right)\text{ on }S_{\boldsymbol{\a}}', \end{equation*} for all $N\in\N$. In particular, $\boldsymbol{\xxi}_{J_{\boldsymbol{s}}}^{k\boldsymbol{\a}_{J_{\boldsymbol{s}}}} f\sim^{\aa}_{s} \boldsymbol{\xxi}_{J_{\boldsymbol{s}}}^{k\boldsymbol{\a}_{J_{\boldsymbol{s}}}} \hat{f}$ on $S_{\aa}^{\boldsymbol{s}}$.
\end{enumerate} Then,  $\mathcal{L}_{\boldsymbol{\lambda}}(f)\sim_{s'}^{\boldsymbol{\a}} \hat{\mathcal{L}}_{\boldsymbol{\lambda}}(\hat{f})$ on any sector in $\xx^\aa$ contained in  $D_{\aa}^{\boldsymbol{s}}(\theta,b-a+\pi/k;M,R)$, where $s'=s+1/k$.
\end{prop}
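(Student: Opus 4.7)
The plan is to parallel Proposition \ref{exp growth of Borel k, s1 s2}: I will express the error as a single Laplace integral and estimate it by means of hypothesis (2). Writing $\hat{T}_\aa(\hat{\mathcal{L}}_{\boldsymbol{\lambda}}(\hat{f}))=\sum h_{\aa,n}t^n$, the formula $h_{\aa,n}(\xx)\xx^{n\aa}=\mathcal{L}_{\boldsymbol{\lambda}}(\xxi_{J_{\boldsymbol{s}}}^{-k\aa_{J_{\boldsymbol{s}}}}\xxi^{n\aa}f_{\aa,n})(\xx)$ from Remark \ref{nota sobre Laplace formal y analitico}, together with the substitution $\xxi(u)=(x_1u^{s_1/\a_1k},\dots,x_du^{s_d/\a_dk})$ and the relation $\sum_{j\in J_{\boldsymbol{s}}}s_j=1$, produces the key identities $\xxi(u)_{J_{\boldsymbol{s}}}^{k\aa_{J_{\boldsymbol{s}}}}=u\xx_{J_{\boldsymbol{s}}}^{k\aa_{J_{\boldsymbol{s}}}}$, $\xxi(u)^{n\aa}=\xx^{n\aa}u^{n/k}$ and $R_{\boldsymbol{\lambda'}}(\xxi(u))=|u|R_{\boldsymbol{\lambda'}}(\xx)$. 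Fixing a direction $\phi$ with $|\phi|<\pi/2$, these combine to give
\begin{equation*}
\mathcal{L}_{\boldsymbol{\lambda}}(f)(\xx)-\sum_{n=0}^{N-1}h_{\aa,n}(\xx)\xx^{n\aa}=\int_0^{e^{i\phi}\infty}u^{-1}\Bigl[\xxi(u)_{J_{\boldsymbol{s}}}^{k\aa_{J_{\boldsymbol{s}}}}f(\xxi(u))-\sum_{n=0}^{N-1}\xxi(u)^{n\aa}f_{\aa,n}(\xxi(u))\Bigr]e^{-u}du.
\end{equation*}
The $u^{-1}$ singularity at the origin is compensated by the $|u|^{N/k}$ vanishing of the bracket forced by hypothesis (2), so the integral is absolutely convergent for every $N\geq 1$.

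Next, given a target subsector of $D_{\aa}^{\boldsymbol{s}}(\theta,b-a+\pi/k;M,R)$, I will choose $\phi$ and an unbounded subsector $S'_{\aa}$ of $S^{\boldsymbol{s}}_{\aa}$ so that $\xxi(u)\in S'_{\aa}$ for every $u$ with $\arg u=\phi$, $|u|>0$, and every $\xx$ in the target; this geometric bookkeeping, which amounts to a mild shrinking of the opening, is the main delicate step and is handled by a routine compactness argument. Applying hypothesis (2) to the bracket and using the identities above yields the pointwise bound $CA^N\Gamma(1+Ns)|\xx^{N\aa}|r^{N/k}e^{MrR_{\boldsymbol{\lambda'}}(\xx)}$, with $r=|u|$. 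Restricting further to a subsector in which $\cos\phi-MR_{\boldsymbol{\lambda'}}(\xx)\geq\delta>0$, direct computation of $\int_0^\infty r^{N/k-1}e^{-r(\cos\phi-MR_{\boldsymbol{\lambda'}}(\xx))}dr$ gives
\begin{equation*}
\Bigl\|\mathcal{L}_{\boldsymbol{\lambda}}(f)(\xx)-\sum_{n=0}^{N-1}h_{\aa,n}(\xx)\xx^{n\aa}\Bigr\|\leq CA^N\Gamma(1+Ns)\,\Gamma(N/k)\,\delta^{-N/k}|\xx^{N\aa}|.
\end{equation*}

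The proof will conclude by combining $\Gamma(N/k)\leq\Gamma(1+N/k)$ with the second inequality in \ref{inq Gamma}, which gives $\Gamma(1+Ns)\Gamma(1+N/k)\leq 2^{N(s+1/k)}\Gamma(1+Ns')$; this is precisely the $s'$-Gevrey bound required by Definition \ref{Def asym exp monomial} on the chosen subsector. Finally, the full domain $D_{\aa}^{\boldsymbol{s}}(\theta,b-a+\pi/k;M,R)$ is swept out by letting $\phi$ vary over $(-\pi/2,\pi/2)$ and invoking the covering of $D_{\aa}^{\boldsymbol{s}}$ by the sectors $S_{\aa}^{\boldsymbol{s}}(\theta-\phi/k,b-a;M,R)$ recorded just before Remark \ref{nota sobre Laplace formal y analitico}, together with the uniqueness of the asymptotic expansion to glue the local estimates into a single $s'$-Gevrey expansion on any sector in $\xx^{\aa}$ contained in the domain.
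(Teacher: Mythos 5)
Your argument is correct and essentially reproduces the paper's own proof: fix a direction $\phi$, write the error as the Laplace integral of the bracket along the ray, use the identities $\xxi(u)_{J_{\boldsymbol{s}}}^{k\aa_{J_{\boldsymbol{s}}}}=u\,\xx_{J_{\boldsymbol{s}}}^{k\aa_{J_{\boldsymbol{s}}}}$, $\xxi(u)^{n\aa}=\xx^{n\aa}u^{n/k}$, $R_{\boldsymbol{\lambda'}}(\xxi(u))=|u|R_{\boldsymbol{\lambda'}}(\xx)$ together with hypothesis (2) on a suitable unbounded subsector, and integrate against $e^{-u}$ on the region $MR_{\boldsymbol{\lambda'}}(\xx)<\cos\phi-\delta$ to obtain $CA^N\delta^{-N/k}\Gamma(1+Ns)\Gamma(N/k)|\xx^{N\aa}|$, finally letting $\phi$ sweep $(-\pi/2,\pi/2)$. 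The only cosmetic slips are at the very end: the inequality you need, $\Gamma(1+Ns)\Gamma(1+N/k)\leq\Gamma(1+Ns')$, is the first (not the second) inequality in (\ref{inq Gamma}), and for $N<k$ one should use $\Gamma(N/k)\leq k\,\Gamma(1+N/k)$ rather than $\Gamma(N/k)\leq\Gamma(1+N/k)$; both only affect the constants $C,A$.
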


\begin{proof}If $N=0$, assertion (2) means that $f$ has exponential growth of order at most $\boldsymbol{\lambda'}$ on $S_{\boldsymbol{\a}}^{\boldsymbol{s}}$. Write  $h=\mathcal{L}_{\boldsymbol{\lambda}}(f)$ and $\hat{T}_{\aa}(\hat{\mathcal{L}}_{\boldsymbol{\lambda}}(\hat{f}))=\sum h_{\aa,n} t^n$ as in Remark \ref{nota sobre Laplace formal y analitico}. For a fixed $|\phi|<\pi/2$, it is enough to prove the estimates for subsectors $S_{\boldsymbol{\a}}''$ contained in $ D_{\aa}^{\boldsymbol{s}}(\theta-\phi/k,b-a;M,r)$. We can find $\delta>0$ small enough such that $MR_{\boldsymbol{\lambda'}}(\xx)<\cos\phi-\delta,$ on $S_{\boldsymbol{\a}}''$. Now, let $S_{\boldsymbol{\a}}'$ be a subsector of $S_{\boldsymbol{\a}}^{\boldsymbol{s}}$ such that $(x_1u^{\frac{s_1}{\a_1k}},\dots, x_du^{\frac{s_d}{\a_dk}})\in S_{\boldsymbol{\a}}'$ if $\boldsymbol{x}\in S_{\boldsymbol{\a}}''$ and $u$ is on the half-line from $0$ to $\infty$ in the direction $\phi$ . Applying the hypothesis (2) for $S_{\boldsymbol{\a}}'$ we see that $$\left\|h(\boldsymbol{x})-\sum_{n=0}^{N-1}h_{\aa,n}(\boldsymbol{x})\boldsymbol{x}^{n\boldsymbol{\a}}\right\|\leq\frac{CA^N}{\delta^{N/k}}\Gamma(1+Ns)\Gamma\left(N/k\right)|\boldsymbol{x}^{N\boldsymbol{\a}}|,\quad \text{ on }S_{\boldsymbol{\a}}'',$$ as we wanted to show. 
\end{proof}

Finally, we introduce a convolution product that shares similar properties with the classical one. Indeed, the \textit{$\xx^{\aa}$--k--$\boldsymbol{s}$--convolution between $f$ and $g$}, $\boldsymbol{s}\in\overline{\sigma_d}$, is defined by \begin{equation*}\label{convolution monomial}
	(f\ast_{\boldsymbol{\lambda}} g)(\boldsymbol{x})=\boldsymbol{\xx}_{J_{\boldsymbol{s}}}^{k\boldsymbol{\a}_{J_{\boldsymbol{s}}}}\int_0^1 f(x_1\tau^{\frac{s_1}{\a_1k}},\dots,x_d\tau^{\frac{s_d}{\a_dk}})g(x_1(1-\tau)^{\frac{s_1}{\a_1k}},\dots,x_d(1-\tau)^{\frac{s_d}{\a_dk}})d\tau,
\end{equation*} where $\boldsymbol{\lambda}$ is given by (\ref{lambda}). As an example we can compute, with the aid of the Beta function, the convolution between two monomials $$\frac{\boldsymbol{x}^{\boldsymbol{\mu}}}{\Gamma\left(\left<\boldsymbol{\mu},\boldsymbol{\lambda}\right>+1\right)}\ast_{\boldsymbol{\lambda}} \frac{\boldsymbol{x}^{\boldsymbol{\eta}}}{\Gamma\left(\left<\boldsymbol{\eta},\boldsymbol{\lambda}\right>+1\right)}=\frac{\boldsymbol{x}^{\boldsymbol{\mu}+\boldsymbol{\eta}}\boldsymbol{\xx}_{J_{\boldsymbol{s}}}^{k\boldsymbol{\a}_{J_{\boldsymbol{s}}}}}{\Gamma\left(\left<\boldsymbol{\mu}+\boldsymbol{\eta},\boldsymbol{\lambda}\right>+2\right)},$$ formula valid for  $\boldsymbol{\mu}, \boldsymbol{\eta}\in\C^d$ with entries of positive real part.

If $f$ and $g$ have exponential growth of order at most $\boldsymbol{\lambda'}$ (resp. belong to $\mathcal{O}_b(S_{\boldsymbol{\a}},E)$), then the same is valid for $f\ast_{\boldsymbol{\lambda}}g$ and \begin{equation}\label{conv. product}
\mathcal{L}_{\boldsymbol{\lambda}}(f\ast_{\boldsymbol{\lambda}} g)=\mathcal{L}_{\boldsymbol{\lambda}}(f)\mathcal{L}_{\boldsymbol{\lambda}}(g), \quad \text{(resp. }\mathcal{B}_{\boldsymbol{\lambda}}(fg)=\mathcal{B}_{\boldsymbol{\lambda}}(f)\ast_{\boldsymbol{\lambda}}\mathcal{B}_{\boldsymbol{\lambda}}(g)\text{)},
\end{equation} as in the classical case. This shows in particular that $\ast_{\boldsymbol{\lambda}}$ is a bilinear, commutative and associative binary operation, distributive  over addition.

\begin{nota}In analogy with the isomorphism explained in (\ref{Iso structres 1 variable}), for each $\aa\in(\N^+)^d$, $k>0$ and $\boldsymbol{s}\in \overline{\sigma_d}$, we have the following monomorphism between the structures  $$\left(E[[\xx]]_{1/k}^{\aa},+,\,\times\, ,X_{\boldsymbol{\lambda}}\right)\xhookrightarrow{\widehat{\mathcal{B}}_{\boldsymbol{\lambda}}} \left(\xxi^{-k\aa}E\{\xxi\},+,\,\ast_{\boldsymbol{\lambda}}\, ,\xxi^{k\aa}(\cdot)\right),$$  by taking into account (\ref{segment of line}) for the image and also (\ref{Borel s1 s2 k y derivadas})  and  (\ref{conv. product}).
\end{nota}

\begin{nota} We remark that all transformations introduced here reduce to their counterparts in one variable, for maps depending only on the corresponding monomial.
\end{nota}

At this point we are ready to define the summation methods based in the above Borel and Laplace transforms. We will see that they turn out to be equivalent to monomial summability.

\begin{defi}\label{def k-s_1,s_2-Borel sum} Consider  $\aa\in(\N^+)^d$, $k>0$, $\boldsymbol{s}\in\overline{\sigma_d}$, and let $\boldsymbol{\lambda}$, $\boldsymbol{\lambda'}$ be given by (\ref{lambda}). We will say that $\hat{f}=\sum_{\bb\in\N^d} a_{\bb}\xx^\bb\in$ $E[[\boldsymbol{x}]]_{1/k}^{\boldsymbol{\a}}$ is  $\xx^{\aa}$--$k$--$\boldsymbol{s}$--\textit{Borel summable in direction} $\theta$ if $\hat{\varphi}_{\boldsymbol{s}}=\hat{\mathcal{B}}_{\boldsymbol{\lambda}}(\hat{f}-\sum_{k\aa_{J_{\boldsymbol{s}}}\not\leq \bb_{J_{\boldsymbol{s}}} } a_\bb \xx^\bb)$ can be analytically continued, say as $\varphi_{\boldsymbol{s}}$, to a domain of the form $S_{\boldsymbol{\a}}^{\boldsymbol{s}}(\theta,2\epsilon,R)$ and having exponential growth of order at most $\boldsymbol{\lambda'}$ there. In this case, the $\xx^{\aa}$--$k$--$\boldsymbol{s}$--\textit{Borel sum of $\hat{f}$ in direction $\theta$} is defined by $$f(\boldsymbol{x})=\sum_{k\aa_{J_{\boldsymbol{s}}}\not\leq \bb_{J_{\boldsymbol{s}}} } a_\bb \xx^\bb+\mathcal{L}_{\boldsymbol{\lambda}}(\varphi_{\boldsymbol{s}})(\boldsymbol{x}).$$
\end{defi}

Note that the terms we have removed from $\hat{f}$ give an analytic map at the origin, since $\hat{f}\in \hat{\mathcal{O}}'_d(E)$.  Another way to avoid the use of power series with non-integer exponents is to consider  $\hat{\mathcal{B}}_{\boldsymbol{\lambda}}(\boldsymbol{\xx}_{J_{\boldsymbol{s}}}^{k\boldsymbol{\a}_{J_{\boldsymbol{s}}}}\hat{f})$, attempt analytic continuation with adequate exponential growth and to multiply by  $\boldsymbol{\xx}_{J_{\boldsymbol{s}}}^{-k\boldsymbol{\a}_{J_{\boldsymbol{s}}}}$ the corresponding Laplace transformation. We have not followed this equivalent approach since the introduction of such factor does not adapt well for the non-linear terms in the PDEs we consider in Section \ref{Monomial summability of a family of singular perturbed PDEs}.

\begin{teor}\label{Monomial summability t Borel Laplace} The following statements are equivalent for a series $\hat{f}\in E[[\xx]]_{1/k}^\aa$:
	\begin{enumerate}
		\item $\hat{f}\in E\{\boldsymbol{x}\}_{1/k,\theta}^{\boldsymbol{\a}}$, i.e., $\hat{f}$ is $\xx^{\aa}$--$k$--summable in the direction $\theta$.
		
		\item There is $\boldsymbol{s}\in\overline{\sigma_d}$ such that $\hat{f}$ is $\xx^{\aa}$--$k$--$\boldsymbol{s}$--Borel summable in direction $\theta$.
		
		\item For all $\boldsymbol{s}\in\overline{\sigma_d}$, $\hat{f}$ is $\xx^{\aa}$--$k$--$\boldsymbol{s}$--Borel summable in direction $\theta$.
	\end{enumerate} In all cases the corresponding sums coincide.
\end{teor}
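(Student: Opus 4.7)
The plan is to establish the cycle $(3) \Rightarrow (2) \Rightarrow (1) \Rightarrow (3)$ and then derive uniqueness of sums from Watson's lemma in the monomial setting. The implication $(3) \Rightarrow (2)$ is immediate, as one can simply pick any $\boldsymbol{s} \in \overline{\sigma_d}$.

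For $(2) \Rightarrow (1)$, fix $\boldsymbol{s}$ for which $\hat{\varphi}_{\boldsymbol{s}} = \hat{\mathcal{B}}_{\boldsymbol{\lambda}}\bigl(\hat{f} - \sum_{k\aa_{J_{\boldsymbol{s}}} \not\leq \bb_{J_{\boldsymbol{s}}}} a_\bb \xx^\bb\bigr)$ admits an analytic continuation $\varphi_{\boldsymbol{s}}$ on $S_{\aa}^{\boldsymbol{s}}(\theta, 2\epsilon, R)$ with exponential growth of order at most $\boldsymbol{\lambda}'$. The series $\boldsymbol{\xxi}_{J_{\boldsymbol{s}}}^{k\aa_{J_{\boldsymbol{s}}}} \hat{\varphi}_{\boldsymbol{s}}$ is convergent, since $\hat{f} \in E[[\xx]]_{1/k}^{\aa}$ and the formal Borel transform lowers the monomial Gevrey order by $1/k$. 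I would therefore apply Proposition \ref{Laplace k s_1 s_2 properties} to $\varphi_{\boldsymbol{s}}$ with $s = 0$: hypothesis (1) holds because the associated $f_{\aa,n}$ form a convergent series in $t$, and hypothesis (2) follows by combining the equality $\boldsymbol{\xxi}_{J_{\boldsymbol{s}}}^{k\aa_{J_{\boldsymbol{s}}}} \varphi_{\boldsymbol{s}} = \boldsymbol{\xxi}_{J_{\boldsymbol{s}}}^{k\aa_{J_{\boldsymbol{s}}}} \hat{\varphi}_{\boldsymbol{s}}$ on a polydisc at the origin with the exponential growth on the sector. The resulting conclusion is $\mathcal{L}_{\boldsymbol{\lambda}}(\varphi_{\boldsymbol{s}}) \sim^{\aa}_{1/k} \hat{f} - \sum_{k\aa_{J_{\boldsymbol{s}}} \not\leq \bb_{J_{\boldsymbol{s}}}} a_\bb \xx^\bb$ on a monomial sector of opening exceeding $\pi/k$ in direction $\theta$; adding back the analytic (at the origin) removed sum yields $(1)$.

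For $(1) \Rightarrow (3)$, fix an arbitrary $\boldsymbol{s} \in \overline{\sigma_d}$ and let $f$ denote the $k$-sum with $f \sim_{1/k}^{\aa} \hat{f}$ on $S_{\aa}(\theta, \pi/k + 2\epsilon, R_0)$. Put $f_0 = \sum_{k\aa_{J_{\boldsymbol{s}}} \not\leq \bb_{J_{\boldsymbol{s}}}} a_\bb \xx^\bb$, which is holomorphic at the origin by the $1/k$-monomial-Gevrey bounds on the coefficients of $\hat{f}$, so that $\tilde f = f - f_0 \sim_{1/k}^{\aa} \hat{f} - f_0$ on the same sector. I would apply Proposition \ref{exp growth of Borel k, s1 s2} to $\tilde f$ with $s = 1/k$ (hence $s' = 0$): part (2) supplies exponential growth of order $\boldsymbol{\lambda}'$ of $\boldsymbol{\xxi}_{J_{\boldsymbol{s}}}^{k\aa_{J_{\boldsymbol{s}}}} g$ on $S_{\aa}^{\boldsymbol{s}}(\theta, 2\epsilon, R_0)$, where $g = \mathcal{B}_{\boldsymbol{\lambda}}(\tilde f)$, and part (1) gives $\boldsymbol{\xxi}_{J_{\boldsymbol{s}}}^{k\aa_{J_{\boldsymbol{s}}}} g \sim_0^{\aa} \boldsymbol{\xxi}_{J_{\boldsymbol{s}}}^{k\aa_{J_{\boldsymbol{s}}}} \hat{\varphi}_{\boldsymbol{s}}$. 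Since a $0$-Gevrey asymptotic to a convergent series forces pointwise equality on the overlap of the sector with a small polydisc, $g$ provides the analytic continuation of $\hat{\varphi}_{\boldsymbol{s}}$ required in the definition of Borel summability, so $(3)$ holds.

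Uniqueness of the sums is a direct consequence of Watson's lemma in the monomial setting: the function constructed in $(2) \Rightarrow (1)$ has $\hat{f}$ as $1/k$-Gevrey monomial asymptotic expansion on a sector of opening larger than $\pi/k$, so it must agree with the $k$-sum of $(1)$. The principal technical care required throughout lies in the consistent handling of the removed terms $\sum_{k\aa_{J_{\boldsymbol{s}}} \not\leq \bb_{J_{\boldsymbol{s}}}} a_\bb \xx^\bb$, which are precisely the monomials whose formal Borel images would involve divergent Gamma factors; once these are cleanly isolated, Propositions \ref{exp growth of Borel k, s1 s2} and \ref{Laplace k s_1 s_2 properties} supply the analytic content.
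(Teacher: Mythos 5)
Your overall plan coincides with the paper's: the cycle $(1)\Rightarrow(3)\Rightarrow(2)\Rightarrow(1)$ using Proposition \ref{exp growth of Borel k, s1 s2} with $s=1/k$ for the Borel direction and Proposition \ref{Laplace k s_1 s_2 properties} with $s=0$ for the Laplace direction, after removing the terms with $k\aa_{J_{\boldsymbol{s}}}\not\leq\bb_{J_{\boldsymbol{s}}}$. The step $(1)\Rightarrow(3)$ is carried out exactly as in the paper. However, in $(2)\Rightarrow(1)$ there is a genuine gap: you assert that hypothesis (2) of Proposition \ref{Laplace k s_1 s_2 properties} ``follows by combining'' the equality of $\boldsymbol{\xxi}_{J_{\boldsymbol{s}}}^{k\aa_{J_{\boldsymbol{s}}}}\varphi_{\boldsymbol{s}}$ with its convergent Taylor series near the origin and the exponential growth on the sector. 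But that hypothesis is a family of estimates, one for each $N$, of the form $\|\boldsymbol{\xxi}_{J_{\boldsymbol{s}}}^{k\aa_{J_{\boldsymbol{s}}}}\varphi_{\boldsymbol{s}}(\xxi)-\sum_{n<N}\varphi_{\aa,n}(\xxi)\xxi^{n\aa}\|\leq CA^N|\xxi^{N\aa}|\exp(MR_{\boldsymbol{\lambda'}}(\xxi))$ with constants $C,A$ \emph{uniform in $N$} on the whole unbounded sector; near the origin this is Taylor's estimate, but on the unbounded part a single exponential bound for $\varphi_{\boldsymbol{s}}$ plus convergence in a polydisc does not by itself produce the factor $A^N|\xxi^{N\aa}|$. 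This verification is precisely the bulk of the paper's proof: one first uses the bounds $\|\varphi_{\aa,n}(\xxi)\|\leq DB^n\exp(MR_{\boldsymbol{\lambda'}}(\xxi))$ from Remark \ref{Borel an T_pq} to see that $\sum\varphi_{\aa,n}(\xxi)\xxi^{n\aa}$ converges on $\{B|\xxi^\aa|<1\}$ (also continuing $\boldsymbol{\xxi}_{J_{\boldsymbol{s}}}^{k\aa_{J_{\boldsymbol{s}}}}\varphi_{\boldsymbol{s}}$ there), and then one splits the sector according to whether $B|\xxi^\aa|\leq 1/2$, $1/2\leq B|\xxi^\aa|\leq 2$, or $B|\xxi^\aa|>2$, bounding the partial sums geometrically in the last two regions so that $2^N$ and the geometric series can be absorbed into $A^N|\xxi^{N\aa}|$. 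Without this region-by-region argument the application of Proposition \ref{Laplace k s_1 s_2 properties} is not justified.

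A smaller imprecision in the same step: hypothesis (1) of Proposition \ref{Laplace k s_1 s_2 properties} is not just that the $\varphi_{\aa,n}$ ``form a convergent series in $t$''; you need them to be holomorphic on a domain unbounded in the $J_{\boldsymbol{s}}$-variables and to satisfy $\|\varphi_{\aa,n}\|\leq DB^n\exp(KR_{\boldsymbol{\lambda'}}(\xxi))$ there with geometric constants. This is where the standing assumption $\hat{f}\in E[[\xx]]_{1/k}^{\aa}$ enters, via Remark \ref{Borel an T_pq}: $\varphi_{\aa,n}$ is essentially the Borel transform of $\xx^{n\aa}f_{\aa,n}$, and the $1/k$-Gevrey bound $\|f_{\aa,n}\|_\rho\leq DB^n n!^{1/k}$ is cancelled by the factor $\Gamma(1+n/k)^{-1}$. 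Your uniqueness argument via Watson's lemma is fine and equivalent to the paper's remark that $\mathcal{B}_{\boldsymbol{\lambda}}$ and $\mathcal{L}_{\boldsymbol{\lambda}}$ are mutually inverse.
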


\begin{proof}We may restrict out attention to the case $\hat{f}=\sum_{k\aa_{J_{\boldsymbol{s}}}\leq \bb_{J_{\boldsymbol{s}}} } a_\bb \xx^\bb.$ To show that (1) implies (3), assume $\hat{f}\in E\{\boldsymbol{x}\}_{1/k,\theta}^{\boldsymbol{\a}}$ and let  $f\in\mathcal{O}(S_{\boldsymbol{\a}}(\theta,\pi/k+2\epsilon, R_0),E)$ be its $\xx^{\aa}$--$k$--sum in direction $\theta$. For a fixed $\boldsymbol{s}\in\overline{\sigma_d}$ and $\boldsymbol{\lambda}$ as usual set  $\varphi_{\boldsymbol{s}}=\mathcal{B}_{\boldsymbol{\lambda}}(f)$ and $\hat{\varphi}_{\boldsymbol{s}}=\hat{\mathcal{B}}_{\boldsymbol{\lambda}}(\hat{f})$, convergent in some $D_r^d$. We can apply Proposition \ref{exp growth of Borel k, s1 s2} to $s=1/k$ to conclude that $\boldsymbol{\xxi}_{J_{\boldsymbol{s}}}^{k\aa_{J_{\boldsymbol{s}}}}\varphi_{\boldsymbol{s}}\sim_0^{\aa} \boldsymbol{\xxi}_{J_{\boldsymbol{s}}}^{k\aa_{J_{\boldsymbol{s}}}}\hat{\varphi}_{\boldsymbol{s}}$ on $S_{\aa}^{\boldsymbol{s}}(\theta,2\epsilon,R_0)$. These two properties imply that $\varphi_{\boldsymbol{s}}$ coincides with the sum of $\hat{\varphi}_{\boldsymbol{s}}$ on $S_{\boldsymbol{\a}}(d,2\epsilon)\cap D_r^d$. In other words,  $\hat{\varphi}_{\boldsymbol{s}}$ can be analytically continued and having exponential growth of order at most $\boldsymbol{\lambda'}$ on $S_{\boldsymbol{\a}}^{\boldsymbol{s}}(\theta,2\epsilon,R_0)$. Therefore, $\hat{f}$ is $\xx^{\aa}$--$k$--$\boldsymbol{s}$--Borel summable in direction $\theta$. Since $\mathcal{B}_{\boldsymbol{\lambda}}$ and $\mathcal{L}_{\boldsymbol{\lambda}}$ are inverses of each other, the $\xx^{\aa}$--$k$--$\boldsymbol{s}$--Borel sum of $\hat{f}$ is $f$.

The implication (3) to (2) is clear. Assuming (2), fix $\boldsymbol{s}\in\overline{\sigma_d}$ such that $\hat{f}$ is  $\xx^{\aa}$--$k$--$\boldsymbol{s}$--Borel summable in direction $\theta$, and let $\varphi_{\boldsymbol{s}}\in\mathcal{O}(S_{\boldsymbol{\a}}^{\boldsymbol{s}}(\theta,2\epsilon,R_0),E)$ and $\hat{\varphi}_{\boldsymbol{s}}$ be as in Definition \ref{def k-s_1,s_2-Borel sum}. Also write  $\hat{T}_{\boldsymbol{\a}}(\boldsymbol{\xxi}_{J_{\boldsymbol{s}}}^{k\boldsymbol{\a}_{J_{\boldsymbol{s}}}}\hat{\varphi}_{\boldsymbol{s}})=\sum \varphi_{\aa,n} \tau^n$. Then we can find constants $B,D>0$ such that \begin{equation}\label{Growth phi phi_n}\|\boldsymbol{\xxi}_{J_{\boldsymbol{s}}}^{k\boldsymbol{\a}_{J_{\boldsymbol{s}}}}\varphi_{\boldsymbol{s}}(\boldsymbol{\xi})\|\leq D\exp(MR_{\boldsymbol{\lambda'}}(\boldsymbol{\xi})),\quad \|\varphi_{\aa,n}(\boldsymbol{\xi})\|\leq DB^n \exp(MR_{\boldsymbol{\lambda'}}(\boldsymbol{\xi})),
\end{equation} the first on  $S_{\boldsymbol{\a}}^{\boldsymbol{s}}(d,2\epsilon',R')$, $0<\epsilon'<\epsilon$, $0<R'<R_0$ by our hypothesis, and the second on $\{\xxi\in\C^d \,|\, |\xi_j|^{\a_j}<\rho, j\not\in J_{\boldsymbol{s}}\}$ for some $0<\rho$, by using Remark \ref{Borel an T_pq}. We may assume $R_0<\rho$ by reducing $R_0$ if necessary.

To apply Proposition \ref{Laplace k s_1 s_2 properties} we have to show that there are constants $C,A>0$ such that, for all $N\in\N$, we have \begin{equation}\label{cond eq BL y MS}
\left\|\boldsymbol{\xxi}_{J_{\boldsymbol{s}}}^{k\boldsymbol{\a}_{J_{\boldsymbol{s}}}}\varphi_{\boldsymbol{s}}(\boldsymbol{\xi})-\sum_{n=0}^{N-1} \varphi_{\aa,n}(\boldsymbol{\xi}) \boldsymbol{\xi}^{n\boldsymbol{\a}}\right\|\leq CA^N|\boldsymbol{\xi}^{N\boldsymbol{\a}}|\exp(MR_{\boldsymbol{\lambda'}}(\boldsymbol{\xi})),\quad \text{ on }S_{\boldsymbol{\a}}^{\boldsymbol{s}}(\theta,2\epsilon',R').\end{equation} Since $\boldsymbol{\xxi}_{J_{\boldsymbol{s}}}^{k\boldsymbol{\a}_{J_{\boldsymbol{s}}}}\hat{\varphi}_{\boldsymbol{s}}$ is the convergent Taylor series of $\boldsymbol{\xxi}_{J_{\boldsymbol{s}}}^{k\boldsymbol{\a}_{J_{\boldsymbol{s}}}}\varphi_{\boldsymbol{s}}$ at the origin,  then (\ref{cond eq BL y MS}) is satisfied for all $|\xi_1|,\dots,|\xi_d|\leq R$, for some $R>0$. Due to (\ref{Growth phi phi_n}), the series of maps $\sum \varphi_{\aa,n}(\boldsymbol{\xi}) \boldsymbol{\xi}^{n\boldsymbol{\a}}$ converges absolutely in compact subsets of $\{\boldsymbol{\xi}\in \C^d \,|\, B|\boldsymbol{\xi}^{\boldsymbol{\a}}|<1\}$, and therefore $\boldsymbol{\xxi}_{J_{\boldsymbol{s}}}^{k\boldsymbol{\a}_{J_{\boldsymbol{s}}}}\varphi_{\boldsymbol{s}}$ can be analytically continued on this domain through this series. Thus, if $B|\boldsymbol{\xi}^{\boldsymbol{\a}}|<1/2$, inequality (\ref{cond eq BL y MS}) is also satisfied. On the other hand, using again (\ref{Growth phi phi_n}),  the left side of (\ref{cond eq BL y MS}) is bounded by  $$\left(D+\sum_{n=0}^{N-1} DB^n |\boldsymbol{\xi}^{n\boldsymbol{\a}}| \right) \exp(MR_{\boldsymbol{\lambda'}}(\xxi))\quad \text{ on } S_{\boldsymbol{\a}}^{\boldsymbol{s}}(\theta,2\epsilon',R').$$

\noindent If $1/2\leq B|\boldsymbol{\xi}^{\boldsymbol{\a}}|\leq 2$, the previous term is bounded by $D2^Ne^{MR_{\boldsymbol{\lambda'}}(\xxi)}\leq D(4B)^N|\boldsymbol{\xi}^{N\boldsymbol{\a}}|e^{MR_{\boldsymbol{\lambda'}}(\xxi)}$. If $B|\boldsymbol{\xi}^{\boldsymbol{\a}}|>2$, we can bound it by $$\left(D+D\frac{B^N|\boldsymbol{\xi}^{N\boldsymbol{\a}}|-1}{B|\boldsymbol{\xi}^{\boldsymbol{\a}}|-1}\right)e^{MR_{\boldsymbol{\lambda'}}(\xxi)}< D B^N|\boldsymbol{\xi}^{N\boldsymbol{\a}}|e^{MR_{\boldsymbol{\lambda'}}(\xxi)},$$ and thus (\ref{cond eq BL y MS}) is valid in all cases with $C,A$ large enough. By applying Proposition \ref{Laplace k s_1 s_2 properties} to $\varphi_{\boldsymbol{s}}$, $\hat{\varphi}_{\boldsymbol{s}}$ and $s=0$, we conclude that $f=\mathcal{L}_{\boldsymbol{\lambda}}(\varphi_{\boldsymbol{s}})\sim_{1/k}^{\boldsymbol{\a}} \hat{\mathcal{L}}_{\boldsymbol{\lambda}}(\hat{\varphi}_{\boldsymbol{s}})=\hat{f}$  on $D_{\aa}^{\boldsymbol{s}}(\theta,2\varepsilon+\pi/k;M,R_0)$.  In conclusion, $\hat{f}$ is $\xx^\aa$--$k$--summable in the direction $\theta$ and its sum can be found through the $\xx ^\aa$--$k$--$\boldsymbol{s}$--Laplace transform of $\varphi_{\boldsymbol{s}}$.
\end{proof}

As an immediate corollary we can relate monomial summability for different powers of a monomial. The proof follows from Theorem \ref{Monomial summability t Borel Laplace}, by noticing that $\boldsymbol{\lambda}=\left(\frac{s_1}{(N\a_1)\frac{k}{N}},\dots,\frac{s_d}{(N\a_d)\frac{k}{N}}\right)$, for all $N\in\N^+$.

\begin{coro}\label{p,q y Mp,Mq} Consider $\aa\in(\N^+)^d$, $k>0$ and $\theta\in\R$. Then $E\{\boldsymbol{x}\}_{1/k,\theta}^{\boldsymbol{\a}}=E\{\boldsymbol{x}\}_{N/k,N\theta}^{N\boldsymbol{\a}}$, and  $E\{\boldsymbol{x}\}_{1/k}^{\boldsymbol{\a}}=E\{\boldsymbol{x}\}_{N/k}^{N\boldsymbol{\a}}$, for all $N\in\N^+$.
\end{coro}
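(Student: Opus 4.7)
The plan is to deduce both equalities by applying Theorem \ref{Monomial summability t Borel Laplace} on each side and noticing that the two Borel summability conditions coincide verbatim once the reparametrization $(\aa,k)\rightsquigarrow(N\aa,k/N)$ is carried out. First, I would fix any $\boldsymbol{s}\in\overline{\sigma_d}$ and compare the weight vectors from (\ref{lambda}): for $(k,\aa)$ one has $\boldsymbol{\lambda}=(s_j/(\a_j k))_j$ and $\boldsymbol{\lambda}'=(\a_j k/s_j)_j$, while for $(k/N,N\aa)$ they read $\bigl(s_j/((N\a_j)(k/N))\bigr)_j=\boldsymbol{\lambda}$ and $\bigl((N\a_j)(k/N)/s_j\bigr)_j=\boldsymbol{\lambda}'$. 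Consequently $\hat{\mathcal{B}}_{\boldsymbol{\lambda}}$ and $\mathcal{L}_{\boldsymbol{\lambda}}$, as well as the exponential growth order $\boldsymbol{\lambda}'$, are the same in both settings (observe in particular that the prefactors $\xxi_{J_{\boldsymbol{s}}}^{-k\aa_{J_{\boldsymbol{s}}}}$ and $\xx_{J_{\boldsymbol{s}}}^{k\aa_{J_{\boldsymbol{s}}}}$ are unchanged since $(k/N)(N\aa)=k\aa$), and the polynomial part removed in Definition \ref{def k-s_1,s_2-Borel sum}, defined by indices $\bb$ with $k\aa_{J_{\boldsymbol{s}}}\not\leq\bb_{J_{\boldsymbol{s}}}$, coincides in the two frames.

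The next step is to identify the extension domains in the Borel plane. Since $\arg(\xxi^{N\aa})=N\arg(\xxi^\aa)$ on the universal cover of $(\C^*)^d$ and $|\xi_j|^{N\a_j}<R^N$ iff $|\xi_j|^{\a_j}<R$, the set $S_{\aa}^{\boldsymbol{s}}(\theta,2\epsilon,R)$ equals $S_{N\aa}^{\boldsymbol{s}}(N\theta,2N\epsilon,R^N)$; moreover the constraints $0<2\epsilon<\pi/k$ and $0<2N\epsilon<\pi/(k/N)=N\pi/k$ are equivalent. Consequently $\hat{f}$ is $\xx^{\aa}$--$k$--$\boldsymbol{s}$--Borel summable in direction $\theta$ if and only if it is $\xx^{N\aa}$--$(k/N)$--$\boldsymbol{s}$--Borel summable in direction $N\theta$, and the corresponding Laplace integrals produce the same sum. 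Invoking Theorem \ref{Monomial summability t Borel Laplace} on both sides then yields the directional identity $E\{\xx\}_{1/k,\theta}^{\aa}=E\{\xx\}_{N/k,N\theta}^{N\aa}$ together with equality of sums.

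For the global equality $E\{\xx\}_{1/k}^{\aa}=E\{\xx\}_{N/k}^{N\aa}$, I would use the directional one together with the fact that $\theta\mapsto N\theta\bmod 2\pi$ is an $N$-to-$1$ surjection of $\R/2\pi\Z$ onto itself, hence carries finite sets to finite sets in both directions: a finite singular set in the $(\aa,k)$ frame has finite image, whereas a finite singular set in the $(N\aa,k/N)$ frame has preimage of cardinality at most $N$ times that, still finite. The only point demanding care, and the closest thing to a genuine obstacle, is the correct bookkeeping of how the bisecting direction, opening, and radius rescale by the factor $N$ when moving between the two frames; once this is set up, the argument is purely formal and requires no further analytic input.
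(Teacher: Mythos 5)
Your proposal is correct and follows essentially the same route as the paper: the paper's proof is precisely the observation that $\boldsymbol{\lambda}=\bigl(\tfrac{s_j}{(N\a_j)(k/N)}\bigr)_j$ is unchanged under $(\aa,k)\rightsquigarrow(N\aa,k/N)$, combined with Theorem \ref{Monomial summability t Borel Laplace}. Your additional bookkeeping (invariance of $\boldsymbol{\lambda}'$ and of the removed polynomial part, the identification of the Borel-plane sectors under $\theta\mapsto N\theta$, and the finiteness of singular sets under the $N$-to-$1$ map) simply spells out details the paper leaves implicit.
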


\section{Tauberian properties for monomial summability}\label{Tauberian properties for monomial summability}

The goal of this section is to recover Tauberian theorems for monomial summability. For instance, the relation between different levels of summability for distinct  monomials, and the comparison of summability in one variable with holomorphic coefficients in the remaining ones. The main tool we use to treat these situations are blow-ups with centers of codimesion two. We also establish  the behavior of Borel-Laplace transformations under these blow-ups.

In one variable we have the following two statements that provide Tauberian properties for $k$--summability.

\begin{teor}\label{Tauberian Ramis} The followings statements are true for $0<k<l$:\begin{enumerate}\item If $\hat{f}\in E\{x\}_{1/k}$ has no singular directions, then it is convergent.
\item $E[[x]]_{1/l}\cap E\{x\}_{1/k}=E\{x\}_{1/l}\cap E\{x\}_{1/k}=E\{x\}.$
\end{enumerate}
\end{teor}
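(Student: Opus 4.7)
Suppose $\hat f=\sum a_n x^n\in E\{x\}_{1/k}$ has no singular directions. I will run the Borel--Laplace machinery recalled at the end of Section \ref{Asymptotic and summability in one variable}. Put $\varphi_0:=\widehat{\mathcal{B}}_k\bigl(\hat f-\sum_{n\le k}a_n x^n\bigr)$, which lies in $E\{\xi\}$ because $\hat f$ is $1/k$--Gevrey. By hypothesis, for every $\theta\in[0,2\pi)$ the series $\hat f$ is $k$--Borel summable in direction $\theta$, meaning $\varphi_0$ extends analytically to an unbounded sector $S_\theta$ around $\theta$ with exponential growth of order at most $k$. Uniqueness of analytic continuation forces these sectorial extensions to agree on overlaps (no singular obstructions are present), and together with the convergent germ at $0$ they assemble into a single entire function $\varphi$. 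A finite subcovering of $S^1$ yields a uniform bound $\|\varphi(\xi)\|\le L e^{M|\xi|^k}$ on all of $\C$. The $k$--Laplace transform $\mathcal{L}_k(\varphi)$ is then holomorphic on a genuine disc $|x|^k<1/M$ around the origin (its value is independent of the direction of integration, by Cauchy's theorem applied inside the region of uniform exponential control), and $\mathcal{L}_k(\varphi)\sim_{1/k}\hat f-\sum_{n\le k}a_n x^n$. Adding back the polynomial gives a holomorphic function on a disc whose Taylor series at $0$ is $\hat f$, so $\hat f\in E\{x\}$.

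\textbf{Plan for Part (2).} The inclusions $E\{x\}\subseteq E\{x\}_{1/l}\cap E\{x\}_{1/k}\subseteq E[[x]]_{1/l}\cap E\{x\}_{1/k}$ are trivial, so the work lies in proving $E[[x]]_{1/l}\cap E\{x\}_{1/k}\subseteq E\{x\}$. My strategy is to force $\hat f$ to have no singular directions and then invoke Part (1). Given $\hat f\in E[[x]]_{1/l}\cap E\{x\}_{1/k}$, again form $\varphi_0=\widehat{\mathcal{B}}_k\bigl(\hat f-\sum_{n\le k}a_n x^n\bigr)$. From the bound $\|a_n\|\le CA^n n!^{1/l}$ and Stirling's formula, the coefficient of $\xi^m$ in $\varphi_0$ is dominated by $C'(A')^m\, m!^{\,1/l-1/k}$, which decays super-exponentially since $1/l-1/k<0$. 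Hence $\varphi_0$ extends to an \emph{entire} function of order at most $\rho:=kl/(l-k)$; in particular $\pi/\rho=\pi(1/k-1/l)$.

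The $k$--summability of $\hat f$ gives exponential growth of order at most $k$ for $\varphi_0$ in sectors around every non--singular direction. Around each of the finitely many singular directions $\theta_j$, I choose a closed sector $\bar S_j$ of opening $\beta_j<\pi/\rho$, which is possible because the non--singular directions are dense. Inside $\bar S_j$ the function $\varphi_0$ is of order at most $\rho$, while on its two boundary rays (which can be picked inside genuine sectors of $k$--summability) it has exponential growth of order at most $k$. Since $\beta_j<\pi/\rho$, the Phragm\'en--Lindel\"of principle in a sector propagates the boundary bound to the interior of $\bar S_j$. Therefore $\varphi_0$ has exponential growth of order at most $k$ in every direction, i.e., $\hat f$ is $k$--summable in every direction. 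By Part (1), $\hat f$ is convergent.

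\textbf{Anticipated main obstacle.} The delicate step is the Phragm\'en--Lindel\"of application: one must combine the sectorial bounds coming from the two non--singular sides of $\theta_j$ into a uniform bound on the rays of $\bar S_j$ with controlled constants, and verify that $\beta_j$ can be chosen strictly less than $\pi/\rho$ (this relies on the strict inequality $k<l$ and on the singular set being finite). The rest of the argument is just the standard Borel--Laplace formalism already developed in the excerpt.
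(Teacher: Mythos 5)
Your proposal is correct, and it is essentially the argument the paper itself relies on: Theorem \ref{Tauberian Ramis} is quoted as a classical one-variable fact, but the paper's proofs of its monomial analogues follow exactly your two steps — Proposition \ref{No sing direction implies convergence} uses the Borel transform, a compactness covering of directions and a resulting global exponential bound of order $k$, while Theorem \ref{tauberian general case}(1) computes that the Borel transform has growth of order $\mu$ with $1/\mu=1/k-s$ and applies the Phragm\'en--Lindel\"of principle on sectors of opening less than $\pi/\mu$ around a putative singular direction. Your anticipated obstacle (choosing the opening strictly below $\pi/\rho$ and normalizing the boundary bounds via a factor $\exp(-D(\xi e^{-i\theta})^k)$ with $D\cos(k\delta)=C$) is handled in the paper in precisely that way, so no gap remains.
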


Our goal is to extend this theorem for monomial summability. We know that a series $\hat{f}$ is $\xx^\aa$-$k$--summable in some direction $\theta$ if and only if 
there exist $r=r_\theta>0$ such that $\hat{T}_{\aa}(\hat{f}) $ is $k$--summable in direction $\theta$ in $\mathcal{E}^\aa_{r_\theta}$  in the classical sense. Unfortunately, $r_\theta$ might tend to 0 when $\theta$ tends to a singular direction. Therefore, $\xx^\aa$-$k$--summability of a series $\hat{f}$ does not imply that $\hat{T}_{\aa}(\hat{f})$ is $k$--summable in $\mathcal{E}^\aa_{r}$ for some fixed $r>0$. For a counterexample, see \cite{Monomial summ}, Section 6. However, we still can recover Theorem \ref{Tauberian Ramis}. We start with the following proposition.


\begin{prop}\label{No sing direction implies convergence} If $\hat{f}\in E\{\boldsymbol{x}\}^{\boldsymbol{\a}}_{1/k}$ has no singular directions, then it is convergent.
\end{prop}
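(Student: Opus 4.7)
The plan is to reduce Proposition 4.2 to the one-variable Tauberian theorem (Theorem 4.1(1)), applied to the $\mathcal{E}^{\aa}_\rho$-valued series $\hat{T}_{\aa}(\hat{f})$. Recall from Lemma 3.2(1) (with $s=0$) that $\hat{f}\in E\{\boldsymbol{x}\}$ if and only if $\hat{T}_{\aa}(\hat{f})\in \mathcal{E}^{\aa}_\rho\{t\}$ for some $\rho>0$, and from the equivalence stated just after Proposition 3.4 that $f\sim^{\aa}_{s} \hat{f}$ on $\Pi_{\aa}(a,b,r)$ if and only if $T_{\aa}(f)_\rho\sim_{s}\hat{T}_{\aa}(\hat{f})$ on $V(a,b,\rho^d)$ for every $0<\rho<r$. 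Thus it suffices to show that $\hat{T}_{\aa}(\hat{f})$ is a convergent power series in $t$ with coefficients in $\mathcal{E}^{\aa}_\rho$ for some fixed $\rho>0$.

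For each direction $\theta\in\R$, the hypothesis provides $\epsilon_\theta,r_\theta>0$ and a sum $f_\theta\sim^{\aa}_{1/k}\hat{f}$ on $S_{\aa}(\theta,\pi/k+2\epsilon_\theta,r_\theta)$. Fixing any $0<\rho_\theta<r_\theta$, the function $g_\theta:=T_{\aa}(f_\theta)_{\rho_\theta}$ is holomorphic on the one-variable sector $V(\theta-\pi/2k-\epsilon_\theta,\theta+\pi/2k+\epsilon_\theta,\rho_\theta^d)$ with classical $1/k$-Gevrey asymptotic expansion $\hat{T}_{\aa}(\hat{f})$ taken in the Banach space $\mathcal{E}^{\aa}_{\rho_\theta}$. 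In other words, regarded as an element of $\mathcal{E}^{\aa}_{\rho_\theta}[[t]]$, the series $\hat{T}_{\aa}(\hat{f})$ is classically $k$-summable in the direction $\theta$.

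Now invoke compactness of $\R/2\pi\Z$: the open arcs of bisecting directions spanned by these one-variable sectors admit a finite subcover corresponding to directions $\theta_1,\dots,\theta_N$. Set $\rho:=\min_j \rho_{\theta_j}>0$. The restriction maps $\mathcal{E}^{\aa}_{\rho_{\theta_j}}\hookrightarrow \mathcal{E}^{\aa}_\rho$ are continuous Banach-space embeddings (by restricting bounded holomorphic maps on $D_{\rho_{\theta_j}}^d$ to $D_\rho^d$), so all the $g_{\theta_j}$ become simultaneously $\mathcal{E}^{\aa}_\rho$-valued $k$-sums of $\hat{T}_{\aa}(\hat{f})$. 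Hence, in the Banach space $\mathcal{E}^{\aa}_\rho$, the series $\hat{T}_{\aa}(\hat{f})$ is $k$-summable in every direction, and Theorem 4.1(1) applied in this Banach space yields $\hat{T}_{\aa}(\hat{f})\in \mathcal{E}^{\aa}_\rho\{t\}$. By Lemma 3.2(1) this is equivalent to $\hat{f}\in E\{\boldsymbol{x}\}$, as desired.

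The main obstacle is producing a single radius $\rho>0$ that works simultaneously in all directions, since, as the paper explicitly warns, $r_\theta$ need not be bounded below a priori. This is exactly where the hypothesis \emph{no singular directions} enters, through the compactness of $\R/2\pi\Z$ combined with the monotone scale of Banach spaces $\{\mathcal{E}^{\aa}_\rho\}_{\rho>0}$; everything else is a translation of the classical statement via $T_{\aa}$.
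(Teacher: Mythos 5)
Your proof is correct, but it follows a genuinely different route from the paper. You reduce the statement to the classical one-variable Tauberian theorem (Theorem \ref{Tauberian Ramis}(1)) applied to the $\mathcal{E}^{\boldsymbol{\a}}_\rho$-valued series $\hat{T}_{\boldsymbol{\a}}(\hat{f})$: the only genuinely new point is the uniform radius, which you obtain by covering the circle of directions with the arcs $(\theta-\epsilon_\theta,\theta+\epsilon_\theta)$ attached to the monomial sums and extracting a finite subcover, so that $\rho=\min_j\rho_{\theta_j}>0$ works in every direction; this is legitimate precisely because the hypothesis of no singular directions makes the arcs cover the whole circle, and it does not contradict the counterexample the paper warns about, where the radii collapse only near singular directions. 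The paper instead stays inside its Borel--Laplace framework: it applies Theorem \ref{Monomial summability t Borel Laplace} direction by direction to get exponential bounds for $\hat{\mathcal{B}}_{\boldsymbol{\lambda}}(\xx^{k\boldsymbol{\a}}\hat{f})$ on monomial sectors, uses compactness of $[0,2\pi]$ to glue these into a global exponential bound on $\C^d\setminus\{x_1\cdots x_d=0\}$, and then bounds the coefficients $a_{\boldsymbol{\beta}}$ directly via Cauchy estimates with optimized radii, Stirling's formula and (\ref{inq Gamma}). So both arguments use compactness of the set of directions, but you apply it to the radii of the sectors of summability and then invoke the one-variable theorem as a black box, whereas the paper applies it to the growth of the several-variable Borel transform and reproves the coefficient estimate from scratch; your version is shorter and conceptually transparent, while the paper's is self-contained, quantitative, and exercises the integral transforms it has just developed. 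One small point you should make explicit: the radius $\rho$ must also be taken at most equal to a common radius $r_0$ on which all coefficients $f_{\boldsymbol{\a},n}$ of $\hat{T}_{\boldsymbol{\a}}(\hat{f})$ are holomorphic and bounded (such an $r_0$ exists since $\hat{f}\in E[[\xx]]^{\boldsymbol{\a}}_{1/k}$), so that the equivalence $f\sim^{\boldsymbol{\a}}_{1/k}\hat{f}$ iff $T_{\boldsymbol{\a}}(f)_\rho\sim_{1/k}\hat{T}_{\boldsymbol{\a}}(\hat{f})$ stated after Proposition \ref{Dependence of only a monomial} is applicable; this is a trivial adjustment and does not affect the argument.
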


\begin{proof}Let us fix $\boldsymbol{s}\in\sigma_d$ and write $\hat{f}=\sum_{\bb\in\N^d} a_{\boldsymbol{\beta}}\boldsymbol{x}^{\boldsymbol{\beta}}$. We use $\xx^\aa$--$k$--$\boldsymbol{s}$--Borel summability as explained below Definition \ref{def k-s_1,s_2-Borel sum}, thus we consider $\varphi_{\boldsymbol{\lambda}}=\hat{\mathcal{B}}_{\boldsymbol{\lambda}}(\xx^{k\aa}\hat{f})$. If $\hat{f}$ has no singular directions for $\xx^\aa$--$k$--summability, Theorem \ref{Monomial summability t Borel Laplace} shows that for each direction $\theta\in [0,2\pi]$, there are constants $\delta_\theta, C_\theta, M_\theta>0$ such that $\|\varphi_{\boldsymbol{\lambda}}(\boldsymbol{\xi})\|\leq C_\theta\exp\left(M_\theta R_{\boldsymbol{\lambda'}}(\xxi) \right)$, for all $\xxi\in S_\aa(\theta,2\delta_\theta)$. Since the interval $[0,2\pi]$ is compact, we can choose a finite number of directions $\theta_1,\dots,\theta_n$ such that $[0,2\pi]\subseteq \cup_{j=1}^n (\theta_j-\delta_{\theta_j},\theta_j+\delta_{\theta_j})$. Then, the sectors $S_\aa(\theta_j,2\delta_{\theta_j})$, $j=1,\dots,n$, cover $\C^d\setminus\{x_1\cdots x_d=0\}$, and if we write $C=\max_{1\leq j\leq n} C_{\theta_j}$ and 
$M=\max_{1\leq j\leq n} M_{\theta_j}$, we find that  $$\|\varphi_{\boldsymbol{\lambda}}(\boldsymbol{\xi})\|\leq C\exp\left(M R_{\boldsymbol{\lambda'}}(\xxi) \right),\quad \text{ for all } \xxi \in \C^d\setminus\{x_1\cdots x_d=0\}.$$ Applying Cauchy's estimates we see that $$\frac{\|a_{\boldsymbol{\beta}}\|}{\Gamma\left(1+\sum_{j=1}^d\frac{\beta_js_j}{\a_jk}\right)}\leq C\prod_{j=1}^d\frac{e^{MR_j^{\a_jk/s_j}}}{R_j^{\beta_j}}, \quad \text{ for all } R_j>0.$$ Since the map $x\mapsto \exp(Mx^l)/x^n$, $l>0$, $n\in\N$, attains a minimum at $x=(n/Ml)^{1/l}$, if we choose $R_j=(\beta_js_j/M\a_jk)^{\frac{s_j}{\a_jk}}$, $j=1,\dots,d$, we conclude that $$\left\|a_{\boldsymbol{\beta}}\right\|\leq C\prod_{j=1}^d\left[\left(\frac{2^dMe\a_jk}{\beta_js_j}\right)^{\beta_js_j/\a_jk}\Gamma\left(1+\frac{\beta_js_j}{\a_jk}\right)\right].$$ \noindent Note we have used inequality (\ref{inq Gamma}) repeatedly. An application of Stirling's formula in each factor leads to the existence of constants $A,K>0$ such that $\|a_\bb\|\leq KA^{|\bb|}$, for all $\bb\in\N^d$, i.e., $\hat{f}$ is a convergent power series.
\end{proof}

To generalize Theorem \ref{Tauberian Ramis} (2), we consider the monomial transformations $$\pi_{ij}(x_1,\dots,x_d)=(x_1,\dots,\underbrace{x_ix_j}_{j \text{th entry}},\dots,x_d),$$ where $i,j=1,\dots,d$, $i\neq j$. Note that the maps $\pi_{ij}, \pi_{ji}$ correspond to the usual charts of the blow-up with the center of codimension two given by $\{x_i=x_j=0\}$. At the formal level we need the following lemma, whose proof is identical as the one of Lemma 3.6 of \cite{CM}.

\begin{lema}\label{f(xe,e) s-Gevrey in x^pe^(p+q)}Let $\hat{f}\in E[[\boldsymbol{x}]]$ be a formal power series. Then the following assertions are true:
	\begin{enumerate}
		\item $\hat{f}\in E\{\boldsymbol{x}\}$ if and only if $\hat{f}\circ\pi_{ij}\in E\{\boldsymbol{x}\}$ for some $i,j=1,\dots,d$.
		\item $\hat{f}\in E[[\boldsymbol{x}]]^{\boldsymbol{\a}}_{s}$ if and only if  there are  $i,j=1,...,d$, $i\neq j$ such that $\hat{f}\circ\pi_{ij}\in E[[\boldsymbol{x}]]^{\boldsymbol{\a}+\a_j\boldsymbol{e}_i}_{s}$ and $\hat{f}\circ\pi_{ji}\in E[[\boldsymbol{x}]]^{\boldsymbol{\a}+\a_i\boldsymbol{e}_j}_{s}$.
	\end{enumerate}
\end{lema}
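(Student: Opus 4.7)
The plan is to transfer everything to Taylor coefficients and apply the characterization of monomial Gevrey series given in Lemma~\ref{Bounds for formal gevrey series}(1). Writing $\hat{f}=\sum a_{\boldsymbol{\beta}}\boldsymbol{x}^{\boldsymbol{\beta}}$, a direct expansion yields $(\hat{f}\circ\pi_{ij})(\boldsymbol{x})=\sum_{\boldsymbol{\beta}} a_{\boldsymbol{\beta}}\, x_i^{\beta_i+\beta_j} x_j^{\beta_j}\prod_{k\neq i,j} x_k^{\beta_k}$, so the coefficient $b_{\boldsymbol{\gamma}}^{(ij)}$ of $\boldsymbol{x}^{\boldsymbol{\gamma}}$ equals $a_{\boldsymbol{\beta}}$ with $\beta_i=\gamma_i-\gamma_j$, $\beta_j=\gamma_j$, $\beta_k=\gamma_k$ for $k\notin\{i,j\}$, and vanishes otherwise; in particular $|\boldsymbol{\beta}|\le |\boldsymbol{\gamma}|\le 2|\boldsymbol{\beta}|$. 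The analogous relation holds for $\pi_{ji}$ after swapping $i$ and $j$. Assertion (1) follows immediately: if $\|a_{\boldsymbol{\beta}}\|\le CA^{|\boldsymbol{\beta}|}$ then $\|b_{\boldsymbol{\gamma}}^{(ij)}\|\le C\max(1,A)^{|\boldsymbol{\gamma}|}$, and conversely bounds on $b^{(ij)}_{\boldsymbol{\gamma}}$ give $\|a_{\boldsymbol{\beta}}\|\le C(A^{2})^{|\boldsymbol{\beta}|}$ via $|\boldsymbol{\gamma}|\le 2|\boldsymbol{\beta}|$.

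For (2), abbreviate $\boldsymbol{\a}'=\boldsymbol{\a}+\a_j\boldsymbol{e}_i$. The crux of the forward direction is the comparison
\begin{equation*}
\min\bigl(\beta_i!^{s/\a_i},\beta_j!^{s/\a_j}\bigr)\le \bigl(\beta_i!^{s/\a_i}\bigr)^{\a_i/(\a_i+\a_j)}\bigl(\beta_j!^{s/\a_j}\bigr)^{\a_j/(\a_i+\a_j)}=(\beta_i!\,\beta_j!)^{s/(\a_i+\a_j)}\le (\beta_i+\beta_j)!^{s/(\a_i+\a_j)}=\gamma_i!^{s/\a_i'},
\end{equation*}
combining $\min(a,b)\le a^{t}b^{1-t}$ with $\beta_i!\,\beta_j!\le (\beta_i+\beta_j)!$; the exponents are chosen exactly so that the weight $\a_i+\a_j$ materializes. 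Since $\beta_l=\gamma_l$ and $\a_l=\a_l'$ for $l\neq i$, this yields $\min_l\beta_l!^{s/\a_l}\le \min_l\gamma_l!^{s/\a_l'}$, so the Gevrey bound on $\hat{f}$ given by Lemma~\ref{Bounds for formal gevrey series}(1) transfers verbatim to $\hat{f}\circ\pi_{ij}$; the statement for $\pi_{ji}$ is symmetric.

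For the converse, I would argue index by index. Given the Gevrey bounds on both pull-backs, fix $\boldsymbol{\beta}$ and bound $\|a_{\boldsymbol{\beta}}\|$ by each factor $\beta_l!^{s/\a_l}$ separately: to recover $\beta_i!^{s/\a_i}$ apply $\pi_{ji}$, which preserves $\a_i$ and realizes $\gamma_i=\beta_i$; to recover $\beta_j!^{s/\a_j}$ apply $\pi_{ij}$; for $l\notin\{i,j\}$ either chart works since $\beta_l=\gamma_l$ and $\a_l$ is unchanged. Because $|\boldsymbol{\gamma}|\le 2|\boldsymbol{\beta}|$ in either chart, all factors $A^{|\boldsymbol{\gamma}|}$ absorb into a common constant $B^{|\boldsymbol{\beta}|}$, and taking the minimum over $l$ produces the bound required by Lemma~\ref{Bounds for formal gevrey series}(1). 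The main technical point—and the reason both charts must be invoked simultaneously—is that the weighted geometric mean in the forward direction only controls the $i$-th term of the minimum, so each of $\pi_{ij}$ and $\pi_{ji}$ is needed to furnish the bound for a single index of $\boldsymbol{\beta}$.
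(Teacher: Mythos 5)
Your argument is correct, and it is essentially the approach the paper has in mind: the paper simply defers to the two-variable case (Lemma 3.6 of \cite{CM}), whose proof is the same coefficient-level comparison you give, namely the bijection $\boldsymbol{\beta}\mapsto\boldsymbol{\gamma}$ between Taylor coefficients of $\hat{f}$ and $\hat{f}\circ\pi_{ij}$ combined with the characterization of Lemma \ref{Bounds for formal gevrey series}(1), the inequality $\min\{a,b\}\le a^{t}b^{1-t}$ together with $\beta_i!\,\beta_j!\le(\beta_i+\beta_j)!$ for the direct implication, and the use of each chart separately to recover the individual factors $\beta_l!^{s/\a_l}$ for the converse.
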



To establish Lemma \ref{f(xe,e) s-Gevrey in x^pe^(p+q)} (2) for summability we will use the following interesting relation between the monomial Borel and Laplace transformations and the blow-up maps $\pi_{ij}$. Fix $\aa\in(\N^+)^d$, $k>0$ and $\boldsymbol{s}\in \overline{\sigma_d}$. If for some indexes $i\neq j$ we have $s_j \a_i\geq s_i\a_j$, ($s_i=0$ if $s_j=0$), then $\boldsymbol{s}'=(s_1',\dots,s_d')\in \overline{\sigma_d}$, where $s_l'=s_l$, if $l\neq i, j$, $s_i'=s_i+\frac{\a_j}{\a_ i} s_i$ and $s_j'=s_j-\frac{\a_j}{\a_ i} s_i$. Furthermore, if $\boldsymbol{\lambda}=(\frac{s_1}{\a_1 k},\dots,\frac{s_d}{\a_d k})$, then $\boldsymbol{\lambda}-\frac{s_i}{\a_ik}\boldsymbol{e}_j=(\frac{s_1'}{\a_1' k},\dots,\frac{s_d'}{\a_d' k})$, where $\aa'=\aa+\a_j\boldsymbol{e}_i$, and the $\xx^\aa$--$k$--$\boldsymbol{s}$--Borel (resp. Laplace) and  $\xx^{\aa'}$--$k$--$\boldsymbol{s}'$--Borel (resp. Laplace) transformations are related by the formulas  \begin{align}\label{Borel blow up}\mathcal{B}_{\boldsymbol{\lambda}}(f)\circ \pi_{ij}(\boldsymbol{\xi})&=\mathcal{B}_{\boldsymbol{\lambda}-\frac{s_i}{\a_ik}\boldsymbol{e}_j}(f\circ \pi_{ij})(\boldsymbol{\xi}),\\ \label{Laplace blow up}\mathcal{L}_{\boldsymbol{\lambda}}(f)\circ \pi_{ij}(\boldsymbol{\xi})&=\mathcal{L}_{\boldsymbol{\lambda}-\frac{s_i}{\a_ik}\boldsymbol{e}_j}(f\circ \pi_{ij})(\boldsymbol{\xi}),\end{align} whenever the functions are defined. The same relations hold for their formal counterparts.

The next theorem corresponds to Theorem 7.24 in \cite{Sum wrt germs} for monomial summability. Although our approach follows the Borel--Laplace analysis developed in the previous section, the idea of proof is based on the same arguments.

\begin{teor}\label{monomial sum and blowups}$\hat{f}\in E\{\boldsymbol{x}\}^{\boldsymbol{\a}}_{1/k,\theta}$ if and only if there exist $i\neq j$ such that $\hat{f}\circ\pi_{ij}\in E\{\boldsymbol{x}\}^{\boldsymbol{\a}+\a_j\boldsymbol{e}_i}_{1/k,\theta}$ and $\hat{f}\circ\pi_{ji}\in E\{\boldsymbol{x}\}^{\boldsymbol{\a}+\a_i\boldsymbol{e}_j}_{1/k,\theta}$. 
\end{teor}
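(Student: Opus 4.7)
The plan is to combine the Borel--Laplace characterization of Theorem \ref{Monomial summability t Borel Laplace} with the intertwining identities (\ref{Borel blow up})--(\ref{Laplace blow up}), which say that the monomial Borel and Laplace transforms behave well under the blow-ups $\pi_{ij}$ after a precise rescaling of the weights $\boldsymbol{s}\mapsto\boldsymbol{s}'$. The Gevrey side of the equivalence is already supplied by Lemma \ref{f(xe,e) s-Gevrey in x^pe^(p+q)}(2), so only the analytic continuation and exponential growth of the Borel transforms of $\hat{f}$, $\hat{f}\circ\pi_{ij}$, and $\hat{f}\circ\pi_{ji}$ need to be propagated.

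For the direct implication, assume $\hat{f}\in E\{\xx\}^{\aa}_{1/k,\theta}$. To handle $\pi_{ij}$, pick $\boldsymbol{s}\in\overline{\sigma_d}$ with $s_i=0$, which makes the compatibility condition $s_j\a_i\ge s_i\a_j$ automatic and reduces the shift in (\ref{Borel blow up}) to the identity. Theorem \ref{Monomial summability t Borel Laplace} guarantees that $\varphi_{\boldsymbol{s}}:=\mathcal{B}_{\boldsymbol{\lambda}}(\hat{f})$ admits analytic continuation to some $S^{\boldsymbol{s}}_{\aa}(\theta,2\epsilon,R)$ with exponential growth of order at most $\boldsymbol{\lambda}'$. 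By (\ref{Borel blow up}), $\varphi_{\boldsymbol{s}}\circ\pi_{ij}$ is the Borel transform of $\hat{f}\circ\pi_{ij}$ with respect to the parameters $(\aa+\a_j\boldsymbol{e}_i,\boldsymbol{s}')$. The identity $\pi_{ij}(\xxi)^{\aa}=\xxi^{\aa+\a_j\boldsymbol{e}_i}$ implies that the $\pi_{ij}$-preimage of $S^{\boldsymbol{s}}_{\aa}(\theta,2\epsilon,R)$ contains a monomial sector for $\aa+\a_j\boldsymbol{e}_i$, and a direct computation using the rescaling formula for $\boldsymbol{s}'$ shows that the pulled-back growth is exactly of the order required by the new parameters. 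A second application of Theorem \ref{Monomial summability t Borel Laplace} yields $\hat{f}\circ\pi_{ij}\in E\{\xx\}^{\aa+\a_j\boldsymbol{e}_i}_{1/k,\theta}$, and the symmetric choice $s_j=0$ handles $\pi_{ji}$.

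For the converse, assume both pulled-back series are summable in direction $\theta$. The hypothesis on $\hat{f}\circ\pi_{ij}$, together with (\ref{Borel blow up}) applied in reverse, produces an analytic continuation of the formal series $\hat{\mathcal{B}}_{\boldsymbol{\lambda}}(\hat{f})$ (for a suitable source weight $\boldsymbol{s}$) on the $\pi_{ij}$-image of a monomial sector for $\aa+\a_j\boldsymbol{e}_i$, with an exponential bound that realizes the required $\boldsymbol{\lambda}'$ growth on the portion of a sector $S^{\boldsymbol{s}}_{\aa}(\theta,2\epsilon,R'')$ where $|\xi_j|^{\a_j}\lesssim|\xi_i|^{\a_i}$. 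Symmetrically, the pullback under $\pi_{ji}$ supplies the matching estimate on the complementary region. Because every monomial sector is contained in $\{\xi_1\cdots\xi_d\neq 0\}$, the two open sets $\{\xi_i\neq 0\}$ and $\{\xi_j\neq 0\}$ together cover $S^{\boldsymbol{s}}_{\aa}(\theta,2\epsilon,R'')$, and the two analytic continuations glue by uniqueness of analytic continuation from the common convergent germ at the origin. The resulting map is holomorphic on $S^{\boldsymbol{s}}_{\aa}(\theta,2\epsilon,R'')$ with exponential growth of order at most $\boldsymbol{\lambda}'$, so Theorem \ref{Monomial summability t Borel Laplace} gives $\hat{f}\in E\{\xx\}^{\aa}_{1/k,\theta}$.

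The main difficulty lies in the converse: each chart controls the growth only on its own region of the sector, and one must combine two estimates of different ``shapes'' (one favouring large $|\xi_i|$, the other favouring large $|\xi_j|$) to obtain a uniform bound $\exp(M R_{\boldsymbol{\lambda}'}(\xxi))$ on the full monomial sector. This is precisely the role of the pair $\pi_{ij},\pi_{ji}$; a single blow-up provides continuation (since monomial sectors avoid the coordinate hyperplanes) but not, in general, the uniform growth estimate in every direction.
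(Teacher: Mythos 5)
Your direct implication is essentially fine (and can be obtained more cheaply, as the paper does, by composing the approximating functions of Proposition \ref{equiv f iff f_n for x^pe^q} with $\pi_{ij}$, since $\xx^{N\aa}\circ\pi_{ij}=\xx^{N(\aa+\a_j\boldsymbol{e}_i)}$; no Borel--Laplace machinery is needed there). The converse, however, contains a genuine gap. The summability of $\hat{f}\circ\pi_{ij}$ gives analytic continuation with exponential growth of its Borel transform $\varphi_1$ only on a monomial sector $S^{\boldsymbol{s}'}_{\aa+\a_j\boldsymbol{e}_i}(\theta,2\epsilon,R_0)$ in which the variables carrying zero weight are confined to a disc of some finite radius $R_0<1$. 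After transporting by (\ref{Borel blow up}), this controls $\varphi=\hat{\mathcal{B}}_{\boldsymbol{\lambda}}(\hat{f})$ only on the region where the ratio of the two relevant variables is small (in the paper's normalization $\aa=\boldsymbol{1}$, $i=1$, $j=2$: where $|\xi_2|<R_0|\xi_1|$), and symmetrically the chart $\pi_{ji}$ controls only $|\xi_1|<R_0|\xi_2|$. These two regions are disjoint and do \emph{not} cover the sector: the conical region $R_0\le|\xi_2/\xi_1|\le R_0^{-1}$, with both variables large, is covered by neither chart. Your claim that ``$\{\xi_i\neq0\}$ and $\{\xi_j\neq0\}$ together cover $S^{\boldsymbol{s}}_{\aa}(\theta,2\epsilon,R'')$'' conflates the image of the blow-up map with the domain on which the continued Borel transform is actually known to exist, so there is nothing to glue on the missing middle region; and, contrary to your closing remark, a single blow-up does not already provide the continuation there — both the continuation and the growth estimate are missing.

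This uncovered region is precisely where the paper's proof does its real work. After reducing to $\aa=\boldsymbol{1}$ (Remark \ref{On rank reduction}) and extending $\varphi$ on the two chart regions by (\ref{Def varphi}), it introduces, for fixed $\tau=\xxi^{\boldsymbol{1}}$ and $\xxi''$, the Cauchy-type integral $G$ of (\ref{Def G}) over two circles $|\zeta|=r$ and $|\zeta|=R$ with $R_0^{-1}r^2<|\tau/w|<R_0R^2$, evaluating $\varphi$ on the outer circle through $\varphi_1$ and on the inner circle through $\varphi_2$. This $G$ supplies the holomorphic extension of $\varphi$ across the middle region, and the choice $R^2=4|\xi_1\xi_2|/R_0$, $r^2=R_0|\xi_1\xi_2|/4$ converts the bounds (\ref{Exp growth p1 p2}) into an estimate of the form $C\exp\left(M(4/R_0)^k|\xi_1\xi_2|^k\right)\le C\exp\left(M(4/R_0)^k\max\{|\xi_1|^{2k},|\xi_2|^{2k}\}\right)$ there, which is the required growth of order $\boldsymbol{\lambda}'$. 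Without this bridging construction (or some equivalent two-circles/Phragm\'en--Lindel\"of type argument in the auxiliary variable $\zeta$ at fixed $\xxi^{\boldsymbol{1}}$), your appeal to Theorem \ref{Monomial summability t Borel Laplace} in the converse cannot be made, so the proof as written does not go through.
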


\begin{proof}Using Proposition \ref{equiv f iff f_n for x^pe^q} we see that if $(f_N)_{N\in\N}$ is a family of bounded analytic functions that provide the monomial asymptotic expansion of $f$, then $(f_N\circ\pi_{ij})_{N\in\N}$ will provide the asymptotic expansion of $f\circ\pi_{ij}$, $i,j=1,\dots,d$, $i\neq j$.
	
Conversely, it is enough to do the proof for the case $\aa=\boldsymbol{1}=(1,1,\dots,1)$, see Remark \ref{On rank reduction}. To fix ideas, we take $i=1, j=2$. It is also not restrictive to assume that $\hat{f}=\sum_{k\leq \beta_1, \beta_2} a_\bb \xx^\bb$ . Applying Theorem \ref{Monomial summability t Borel Laplace}, we know by hypothesis that  $\hat{f}\circ\pi_{12}$ is $\xx^{\boldsymbol{1}+\boldsymbol{e}_1}$ --$k$--$\boldsymbol{e}_1$--summable and $\hat{f}\circ\pi_{21}$ is $\xx^{\boldsymbol{1}+\boldsymbol{e}_2}$--$k$--$\boldsymbol{e}_2$--summable, both in direction $\theta$. This means that we can find $\varepsilon>0$ and $R_0<1$ such that the maps  $$\varphi_1(\xxi)=\hat{\mathcal{B}}_{\frac{1}{2k}\boldsymbol{e}_1}(\hat{f}\circ \pi_{12}),\quad \varphi_2(\xxi)=\hat{\mathcal{B}}_{\frac{1}{2k}\boldsymbol{e}_2}(\hat{f}\circ \pi_{21}),$$ can be analytically continued to the domains  $S^{\boldsymbol{e}_1}_{\boldsymbol{1}+\boldsymbol{e}_1}(\theta,2\epsilon,R_0)$ and  $S^{\boldsymbol{e}_2}_{\boldsymbol{1}+\boldsymbol{e}_2}(\theta,2\epsilon,R_0)$, and furthermore, there are constants $C,M>0$ such that \begin{equation}\label{Exp growth p1 p2}
\|\varphi_1(\xxi)\|\leq C\exp(M|\xi_1|^{2k}),\quad \|\varphi_2(\xxi)\|\leq C\exp(M|\xi_2|^{2k}),	
\end{equation} on their respective domains.

We will prove that $\hat{f}\in E\{\boldsymbol{x}\}^{\boldsymbol{1}}_{1/k,\theta}$ by showing that $\hat{f}$ is $\xx^{\boldsymbol{1}}$--$k$--$\boldsymbol{s}$--summable in direction $\theta$, where $\boldsymbol{s}=\frac{1}{2}\left(\boldsymbol{e}_1+\boldsymbol{e}_2\right)$. In this case, $\boldsymbol{\lambda}$ and $\boldsymbol{\lambda}'$ in (\ref{lambda}) are given by  $\boldsymbol{\lambda}=\frac{1}{2k}(\boldsymbol{e}_1+\boldsymbol{e}_2)$ and $\boldsymbol{\lambda}'=2k(\boldsymbol{e}_1+\boldsymbol{e}_2)$. We know that $\varphi=\hat{\mathcal{B}}_{\boldsymbol{\lambda}}(\hat{f})$ is analytic in a polydisk at the origin. By reducing $R_0$ if necessary, we assume that $\varphi\in\mathcal{O}(D_{R_0}^d,E)$.

To finish the proof we show that $\varphi$ can be analytically continued to $S^{\boldsymbol{s}}_{\boldsymbol{1}}(\theta,2\epsilon,R_0)$ with exponential growth at most $\boldsymbol{\lambda}'$. First of all, we use formulas (\ref{Borel blow up}) to write $$\hat{\mathcal{B}}_{\boldsymbol{\lambda}}(\hat{f})(\xi_1,\xi_1\xi_2,\xxi'')=\hat{\mathcal{B}}_{\frac{1}{2k}\boldsymbol{e}_1}(\hat{f}\circ \pi_{12})(\boldsymbol{\xi}),\quad  \hat{\mathcal{B}}_{\boldsymbol{\lambda}}(\hat{f})(\xi_1\xi_2,\xi_2,\xxi'')=\hat{\mathcal{B}}_{\frac{1}{2k}\boldsymbol{e}_2}(\hat{f}\circ \pi_{21})(\boldsymbol{\xi}),$$ where $\xxi=(\xi_1,\xi_2,\xxi'')$. Then, we can extend $\varphi$ by the rules \begin{equation}\label{Def varphi}
\varphi(\xxi)=\left\{\begin{array}{ll}\varphi_1(\xi_1,\xi_2/\xi_1,\xxi''), &\text{ if }\quad |\xi_2|<R_0|\xi_1|,\\ \varphi_2(\xi_1/\xi_2,\xi_2,\xxi''), &\text{ if }\quad |\xi_1|<R_0|\xi_2|,\end{array}\right.	
\end{equation} as long as $\xxi$ satisfies $|\text{arg}(\xxi^{\boldsymbol{1}})-\theta|<\epsilon$, $|\xi_j|<R_0, j\neq 1,2$, and $1/R_0<|\xi_2/\xi_1|$ or $|\xi_2/\xi_1|<R_0$. Note that $\varphi$ has exponential growth at most $\boldsymbol{\lambda}'$ in this domain due to inequalities (\ref{Exp growth p1 p2}). Finally, to remove the restrictions on the norms of $\xi_1, \xi_2$, we use Cauchy's integral formula. To simplify notation, we use the auxiliary variable $w=\xi_3\cdots \xi_d$ (the case $d=2$ does not require $w$ or $\xxi''$ above). Since we are working with the monomial $\xxi^{\boldsymbol{1}}$, we also introduce the variable $\tau=\xxi^{\boldsymbol{1}}=\xi_1\xi_2 w$. Then we define the map \begin{equation}\label{Def G}
G(\xi_1,\tau,\xxi'')=\frac{1}{2\pi i} \left(\int_{|\zeta|=R}-\int_{|\zeta|=r}\right) \frac{\varphi\left(\zeta,\frac{\tau/w}{\zeta} ,w\right)}{\zeta-\xi_1}d\zeta,
\end{equation} where $\varphi$ is given by (\ref{Def varphi}), thus, we require that $R_0^{-1}r^2<|\tau/w|<R_0R^2$ and $0<r<R$. Note that the integral is independent of $r$ and $R$ as long as these constraints are satisfied. We will check that $G$ defines an holomorphic map on $\Omega= \C^\ast\times S(\theta,2\varepsilon)\times (D^{d-2}_{R_0}\setminus\{w=0\})=\C^\ast\times \Omega_1$. Then $G$ will provide the required extension to $S^{\boldsymbol{s}}_{\boldsymbol{1}}(\theta,2\epsilon,R_0)$, since $G(\xi_1,\xxi^{\boldsymbol{1}},\xxi'')=\varphi(\xxi)$ if $R_0^{-1}r^2<|\xi_1\xi_2|<R_0R^2$ and $r<|\xi_1|<R$, due to Cauchy's integral formula.

To prove the holomorphy of $G$, consider $U\subseteq K\subseteq \C^\ast$ and $U_1\subseteq K_1\subseteq \Omega_1$ where $U,U_1$ are open and $K,K_1$ are compact. Write  $L_1=\inf_{(\tau,\xxi'')\in K_1} |\tau/w|$ and $L_2=\sup_{(\tau,\xxi'')\in K_1} |\tau/w|$, both finite positive numbers, since $\tau$ and $w$ do not vanish on $\Omega_1$. Then choose $0<r<R$ such that $r^2<R_0L_1<R_0^{-1}L_2<R^2$ (recall that $R_0<1$) and $r<\inf_{\xi_1\in K} |\xi_1|\leq \sup_{\xi_1\in K} |\xi_1|<R$. Then $G$ is defined at all points of $U\times U_1$ and it is clearly holomorphic there. 

Finally, we need to show that the extension of $\varphi$ has exponential growth or order at most $\boldsymbol{\lambda}'$ on $S^{\boldsymbol{s}}_{\boldsymbol{1}}(\theta,2\epsilon,R_0)$, for $\xi_1 ,\xi_2$ such that $R_0\leq |\xi_2/\xi_1|\leq R_0^{-1}$. By calculating the values of $\varphi$ in (\ref{Def G}) for $|\zeta|=R$ using $\varphi_1$ and for $|\zeta|=r$ using $\varphi_2$ instead, we can employ inequalities (\ref{Exp growth p1 p2}) to find that  $$\|\varphi(\xxi)\|=\|G(\xi_1,\xxi^{\boldsymbol{1}},\xxi'')\|\leq \frac{CR\exp\left(MR^{2k}\right)}{\text{dist}(\xi_1,\d D_R)}+\frac{Cr\exp\left(M|\tau/rw|^{2k}\right)}{\text{dist}(\xi_1,\d D_r)}.$$ Since $\tau/w=\xi_1\xi_2$, by taking $R^2=4|\xi_1\xi_2|/R_0$ and $r^2=R_0|\xi_1\xi_2|/4$, we conclude that $$\|\varphi(\xxi)\|\leq 
\left(\frac{R}{\left|R-|\xi_1|\right|}+\frac{r}{\left|r-|\xi_1|\right|}\right)C\exp\left(M(4/R_0)^k |\xi_1\xi_2|^k\right).$$ Note that the denominators do not vanish and are uniformly bounded due to the restriction $R_0\leq |\xi_2/\xi_1|\leq R_0^{-1}$. The conclusion now follows by noting that $|\xi_1\xi_2|^k\leq \max\{|\xi_1|^{2k},|\xi_2|^{2k}\}$.

\end{proof}


We are ready to state and prove the third main result so far, comparing summable series in different monomials. This result was obtained in \cite{CM}. Although the content is correct, the proof given there is based on the false statement discussed above Proposition \ref{No sing direction implies convergence}. This is repaired here.

\begin{teor}\label{tauberian general case}Consider $\aa,\aa'\in(\N^+)^d$ and $k, k'>0$. The following statements hold:
	
	\begin{enumerate}
		\item If $\hat{f}\in E\{\boldsymbol{x}\}_{1/k}^{\boldsymbol{\a}}$ and $\hat{T}_\aa(\hat{f})$ is an $s$--Gevrey series with some $s<1/k$, then $\hat{f}$ is convergent. In particular, if $\max_{1\leq j\leq d}\{\a_j/\a_j'\}< k'/k$,  then $E\{\xx\}_{1/k}^{\aa}\cap  E[[\xx]]_{1/k'}^{\aa'}=E\{\xx\}$.
		
		\item $E\{\xx\}_{1/k}^{\aa}\cap E\{\xx\}_{1/k'}^{\aa'}=E\{\xx\}$, except in the case $k\aa=k'\aa'$, where $E\{\boldsymbol{x}\}_{1/k}^{\boldsymbol{\a}}=E\{\boldsymbol{x}\}_{1/k'}^{\boldsymbol{\a}'}$.
	\end{enumerate}
\end{teor}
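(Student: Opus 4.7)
My plan is to reduce (1) to the one-variable Tauberian theorem (Theorem \ref{Tauberian Ramis}) via the monomial Borel transformation of Section \ref{Borel-Laplace analysis for monomial summability}, and then to reduce (2) to (1) through Corollary \ref{p,q y Mp,Mq} and the blow-up characterization of Theorem \ref{monomial sum and blowups}.

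For the first claim of (1), I fix $\boldsymbol{s}\in\sigma_d$ in the open simplex and set $\varphi=\hat{\mathcal B}_{\boldsymbol{\lambda}}(\xx^{k\aa}\hat f)$ with $\boldsymbol{\lambda}$ as in (\ref{lambda}). By Theorem \ref{Monomial summability t Borel Laplace}, the $\xx^\aa$--$k$--summability of $\hat f$ in every non-singular direction $\theta$ gives that $\varphi$ extends from its polydisc at the origin to $S_\aa^{\boldsymbol{s}}(\theta,2\epsilon_\theta)$ with exponential growth of order $\boldsymbol{\lambda}'$. The $s$--Gevrey hypothesis with $s<1/k$ on $\hat T_\aa(\hat f)$ translates, via Lemma \ref{Bounds for formal gevrey series}(1), into coefficient bounds $\|a_{\bb}\|\le CA^{|\bb|}\min_j\beta_j!^{s/\a_j}$ for $\hat f$; a direct Cauchy estimate for $\varphi$'s Taylor coefficients together with Stirling's formula and inequalities (\ref{inq Gamma}) then shows that $\varphi$ is entire in $\xxi$ of finite exponential order. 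The key step is a Phragm\'en--Lindel\"of extension across each of the finitely many singular directions: in a thin sector around a singular ray $\varphi$ has finite exponential order and on the boundary rays it already satisfies exponential growth of order $\boldsymbol{\lambda}'$, so shrinking the aperture enough Phragm\'en--Lindel\"of propagates this bound to the whole thin sector. Hence $\varphi$ has exponential growth of order $\boldsymbol{\lambda}'$ globally, $\hat f$ is $\xx^\aa$--$k$--summable in every direction, and Proposition \ref{No sing direction implies convergence} forces $\hat f\in E\{\xx\}$. The ``in particular'' consequence is immediate: by Lemma \ref{Bounds for formal gevrey series}(2) a series in $E[[\xx]]_{1/k'}^{\aa'}$ makes $\hat T_\aa(\hat f)$ a Gevrey series of index $\max_j\{\a_j/\a_j'\}/k'<1/k$.

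For (2), when $k\aa=k'\aa'$ the vectors $\aa$ and $\aa'$ are proportional, so one can choose $N_1,N_2\in\N^+$ with $N_1\aa=N_2\aa'$; the identity $k\aa=k'\aa'$ also forces $N_1/k=N_2/k'$, and two applications of Corollary \ref{p,q y Mp,Mq} give $E\{\xx\}_{1/k}^{\aa}=E\{\xx\}_{N_1/k}^{N_1\aa}=E\{\xx\}_{N_2/k'}^{N_2\aa'}=E\{\xx\}_{1/k'}^{\aa'}$. Otherwise, assume $k\aa\neq k'\aa'$ and take $\hat f$ in the intersection. If $\aa'=(p/q)\aa$ with coprime $p,q\in\N^+$, Corollary \ref{p,q y Mp,Mq} rewrites both spaces as summability classes for the common monomial $\xx^{q\aa}=\xx^{p\aa'}$ at the distinct Gevrey orders $q/k$ and $p/k'$, and the ``in particular'' corollary of (1) (with the smaller order serving as the $k$--summability and the larger providing the Gevrey bound) gives $\hat f\in E\{\xx\}$. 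For non-proportional $\aa,\aa'$, the condition $k\aa\neq k'\aa'$ entails that either $\min_l\{\a_l/\a_l'\}<k'/k$ or $\max_l\{\a_l/\a_l'\}>k'/k$. In the first case I iterate the blow-up $\pi_{l,l_0}$ (with $l_0$ realizing $\min_l\{\a_l/\a_l'\}$) on the basis of Theorem \ref{monomial sum and blowups}: this replaces the ratio at $l$ by the successive mediants $(\a_l+N\a_{l_0})/(\a_l'+N\a_{l_0}')$, which converge to $\a_{l_0}/\a_{l_0}'<k'/k$, so after finitely many blow-ups every leaf chart satisfies $\max_l\a_l/\a_l'<k'/k$ and the ``in particular'' corollary of (1) applies there; the symmetric case is analogous. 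Lemma \ref{f(xe,e) s-Gevrey in x^pe^(p+q)}(1) then recombines the convergence of $\hat f$ in every leaf chart into $\hat f\in E\{\xx\}$. The principal obstacle is to control the recursion through the two charts $\pi_{ij}$ and $\pi_{ji}$ generated at each blow-up step, establishing a termination argument that simultaneously keeps track of the evolution of $\aa,\aa',k,k'$ in every branch.
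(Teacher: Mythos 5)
Your overall architecture is the paper's: for (1), a Borel transform that becomes entire of lower exponential order because $s<1/k$, a Phragm\'en--Lindel\"of argument across each would-be singular direction, and then Proposition \ref{No sing direction implies convergence}; for (2), Corollary \ref{p,q y Mp,Mq} in the proportional case and blow-ups plus Lemma \ref{f(xe,e) s-Gevrey in x^pe^(p+q)}(1) otherwise. Two steps, however, are not in order. In (1) you invoke Phragm\'en--Lindel\"of for the multivariable Borel transform $\varphi=\hat{\mathcal{B}}_{\boldsymbol{\lambda}}(\xx^{k\aa}\hat{f})$ on a thin monomial sector $S_{\aa}(\theta,2\delta)$: this is a $d$--dimensional domain and the bound to be propagated, $\exp(MR_{\boldsymbol{\lambda'}}(\xxi))$, is not a function of a single complex variable, so the principle does not apply as you state it. The paper avoids this entirely by taking the classical one-variable $k$--Borel transform of $\hat{T}_{\aa}(\hat{f})$, i.e. $g(\xx,\xi)=\sum f_{\aa,n}(\xx)\xi^{n}/\Gamma(1+n/k)$ with $\xx$ confined to a fixed polydisc, so that the usual Phragm\'en--Lindel\"of principle applies verbatim in the single variable $\xi$ on $S(\theta,2\delta)$ with $2\delta<\pi/\mu$, $1/\mu=1/k-s$, and the conclusion is read back as $k$--Borel summability of $\hat{T}_\aa(\hat f)$ in $\mathcal{E}^{\aa}_{\rho}$. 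Your version can be salvaged by slicing: freeze $\xi_2,\dots,\xi_d$, apply the one-variable principle in $\xi_1$ on the induced sector of aperture $2\delta/\a_1$ with a correction factor of the form $\exp\bigl(-D(\xi_1e^{-i\omega})^{\a_1k/s_1}\bigr)$, using $s_1\neq0$, $\delta$ small relative to the $\xi_1$--order of $\varphi$, and uniformity of the ray bounds in the frozen variables; but this reduction is precisely the step you leave unexplained, and it is the heart of the argument.

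Second, the ``principal obstacle'' you leave open in (2) does not exist: to descend along a blow-up you never need both charts. The forward (easy) implication of Theorem \ref{monomial sum and blowups} --- really Proposition \ref{equiv f iff f_n for x^pe^q} applied to the family $F_N\circ\pi_{ij}$ --- gives summability of $\hat{f}\circ\pi_{ij}$ for the modified monomial for each single choice of $i\neq j$, and Lemma \ref{f(xe,e) s-Gevrey in x^pe^(p+q)}(1) needs the convergence of $\hat{f}\circ\pi_{ij}$ for just one pair $(i,j)$ to conclude $\hat{f}\in E\{\xx\}$. Hence there is no branching tree of ``leaf charts'' and no termination problem; moreover no iteration to the limit of mediants is required: choosing the multiplicities explicitly, $N_l>(k\a_l-k'\a_l')/(k'\a_{l_0}'-k\a_{l_0})$, makes every new ratio $(\a_l+N_l\a_{l_0})/(\a_l'+N_l\a_{l_0}')$ smaller than $k'/k$ in one shot, which is exactly how the paper concludes. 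Once that spurious worry is removed (and the symmetric case $\max_l\a_l/\a_l'>k'/k$ is handled by exchanging the roles of $(\aa,k)$ and $(\aa',k')$), your treatment of (2), including the proportional case via Corollary \ref{p,q y Mp,Mq} and the reduction to part (1), coincides with the paper's.
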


\begin{proof} (1) The proof of the first statement is based in the proof of Theorem 3.8.2 in \cite{RS89}. 
	We write $\hat T_\aa(\hat f)(t)=\sum_{n=0}^\infty f_{\aa,n}t^n$, with $f_{\aa,n}\in \mathcal{E}_r^{\boldsymbol{\a}}$ and 
	use the
	$k$--Borel transform $g$ of $\hat T_\aa (\hat{f})$ in the form $g(\xx,\xi)=\sum_{n=0}^\infty \frac{f_{\aa,n}(\xx)}{\Gamma(1+n/k)}\xi^n$.
	Since $\hat T_\aa(\hat{f})$ is $s$--Gevrey with some $s<1/k$, we find constants $K,A>0$ such that $$\frac{\|f_{\aa,n}(\xx)\|}{\Gamma(1+n/k)}\leq K A^n n!^{-1/\mu},\quad \text{for all } \xx\in D_r^d, n\in\N,\quad  1/\mu:=1/k-s.$$ This implies that $g$ defines a holomorphic function on $D_r^d\times\C$ and we can find constants $L,B>0$ such that
	\begin{equation*}\label{expomu}\|g(\xx,\xi)\|\leq L \exp(B|\xi|^\mu),\quad \mbox{ for all }  (\xx,\xi)\in D_r^d\times \C.\end{equation*}
	
	We show now that $\hat f$ has no singular directions for $\xx^\aa$--$k$--summability, thus, it is convergent due to Proposition \ref{No sing direction implies convergence}. Indeed, arguing by contradiction, we assume $\theta$ is a singular direction of $\hat f$. We choose $0<\delta<\frac\pi{2\mu}$ such that $\hat{T}_\aa(\hat{f})$ is $k$--Borel--summable in the directions $\theta_{\pm}=\theta\pm\delta$, in some $\mathcal{E}^\aa_{r'}$, $0<r'<r$. Then, there exist $0<\rho<r'$ and $M,C>0$ such that $g$ satisfies
	\begin{equation*}\label{expok}\|g(\xx,\xi)\|\leq M\exp(C|\xi|^k),\quad \mbox{ for all } \xx\in D_{\rho}^d,\,  \arg(\xi)=\theta_{\pm}.\end{equation*}
	We can use the Phragm\'{e}n-Lindel\"{o}f principle, see e.g. \cite[Thm. 70, p. 235]{Balser2},  to show that $g$ also has exponential growth of order $k$ on the sector $S(\theta, 2\delta)$. Indeed, consider the map 
	$h(\xx,\xi)=g(\xx,\xi)\exp\left(-D(\xi e^{-{i\theta}})^k\right)$, where $D\cos(k\delta)=C$. Using the previous bounds, it follows that $\|h(\xx,\xi)\|\leq M$ if $\arg(\xi)=\theta_{\pm}$, and 
	$\|h(\xx,\xi)\|\leq L \exp(B|\xi|^\mu)$, if $\arg(\xi)\in[\theta_-,\theta_+]$, for all $\xx\in D_\rho^d$. Since the opening of $S(\theta,2\delta)$ is
	smaller than $\pi/\mu$, Phragm\'{e}n-Lindel\"{o}f principle yields that $h$ is bounded on the full sector. Thus we can find constants $\tilde M,\tilde C>0$ such that $$\|g(\xx,\xi)\|\leq \tilde M\exp(\tilde C|\xi|^k),\quad \mbox{ for all } (\xx,\xi)\in D_\rho^d\times S(\theta,2\delta).$$ This means that 
	$\hat{T}_\aa(\hat{f})$ is $k$--Borel--summable in direction $\theta$ in $\mathcal{E}^{\aa}_\rho$.  Thus, $\hat f$ is $\xx^\a$--$k$--summable in direction $\theta$, what contradicts the assumption.

The second statement in (1) follows from the first one, since Lemma \ref{Bounds for formal gevrey series} (2) implies that if $\hat{f}\in E[[\xx]]_{1/k'}^{\aa'}$, then $\hat{T}_{\aa}(\hat{f})$ is a $\max_{1\leq j\leq d}\{\a_j/\a_j'\}/k'$--Gevrey series in some $\mathcal{E}_r^{\boldsymbol{\a}}$. 
	
(2) Consider $\hat{f}\in E\{\xx\}_{1/k}^{\aa}\cap E\{\xx\}_{1/k'}^{\aa'}$. If $\a_j/\a_j'$ is independent of $j$, write this positive rational number as $a/b$, with $\text{g.c.d.}(a,b)=1$. Then $\a_j=m_j a$, $\a_j'=m_j b$ for some $m_j\in\N^+$, and by applying Corollary \ref{p,q y Mp,Mq}, we have that $E\{\xx\}_{1/k}^{\aa}=E\{\xx\}_{1/ak}^{\boldsymbol{m}}$ and $E\{\xx\}_{1/k'}^{\aa'}=E\{\xx\}_{1/bk'}^{\boldsymbol{m}}$ where $\boldsymbol{m}=(m_1,\dots,m_d)$. Then, this case follows from (1). Furthermore, the cases $\max_{1\leq j\leq d}\{\a_j/\a_j'\}< k'/k$ and $k'/k<\min_{1\leq j\leq d}\{\a_j/\a_j'\}$ also follow from (1).

Finally, the case $\min_{1\leq j\leq d}\, \a_j/\a_j'\leq k'/k\leq \max_{1\leq j\leq d}\, \a_j/\a_j'$ can be reduced to the previous situations by using monomial transformations. To fix ideas, assume that $\a_1/\a_1'\leq \a_2/\a_2'\leq\cdots\leq \a_d/\a_d'$ and $\a_1/\a_1'<k'/k$. If $j_0$ is the smallest index such that $k'/k\leq\a_{j_0}/\a_{j_0}'$, for each $j_0\leq j\leq d$ choose $N_j\in\N^+$  such that $$\frac{k\a_j-k'\a_j'}{k'\a_1'-k\a_1}<N_j.$$ Proposition \ref{monomial sum and blowups} shows that $\hat{f}_1=\hat{f}\circ\pi_{d1}^{N_d}\circ\cdots\circ\pi_{j_01}^{N_{j_0}}\in E\{\xx\}_{1/k}^{\boldsymbol{\a}+\a_1\sum_{j=j_0}^d N_j\boldsymbol{e}_j}\cap E\{\xx\}_{1/k'}^{\aa'+\a_1'\sum_{j=j_0}^d N_j\boldsymbol{e}_j}$, but the new monomials satisfy $\max_{{1\leq i<j_0}\atop{j_0\leq j\leq d}}\{\a_i/\a_i', (N_j\a_1+\a_j)/(N_j\a_1'+\a_j')\}<k'/k$. Thus $\hat{f}_1$ is convergent, and by Lemma \ref{f(xe,e) s-Gevrey in x^pe^(p+q)} (1), so is $\hat{f}$.
\end{proof}

The same idea of proof can be generalized to construct series which are not $\xx^\aa$--$k$--summable for any $\xx^\aa$ or $k>0$. The following theorem is a version of \cite[Thm. 3.9]{CM}, which is incorrect as it is stated there. What is actually proved there, by induction on $n$, is the following.


\begin{teor}\label{series non monomial sumable} Consider $\aa_1,\dots,\aa_n\in (\N^+)^d$ and $k_1,\dots,k_n>0$. For each $j=1,\dots,n$, take a series $\hat{f}_j\in E\{\xx\}^{\aa_j}_{1/k_j}$. If $k_i\aa_i\neq k_j\aa_j$, for all $i\neq j$ and  $\hat{f}_1+\cdots+\hat{f}_n=0$, then $\hat{f}_j\in E\{\xx\}$, for all $j=1,\dots,n$.
\end{teor}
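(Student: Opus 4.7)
I would prove the statement by induction on $n$, strengthening the conclusion slightly so the induction closes. The base case $n=1$ is trivial, while $n=2$ is immediate from Theorem \ref{tauberian general case}(2): the identity $\hat{f}_1=-\hat{f}_2$ puts $\hat{f}_1$ in $E\{\xx\}^{\aa_1}_{1/k_1}\cap E\{\xx\}^{\aa_2}_{1/k_2}$, and this intersection equals $E\{\xx\}$ because $k_1\aa_1\neq k_2\aa_2$.

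For the inductive step it is convenient to prove the slightly more general assertion that if $\hat{f}_1+\cdots+\hat{f}_n=\hat{h}\in E\{\xx\}$ with $\hat{f}_j\in E\{\xx\}^{\aa_j}_{1/k_j}$ and the $k_j\aa_j$ pairwise distinct, then every $\hat{f}_j$ is convergent. The theorem is the case $\hat{h}=0$, and the benefit is that once one summand, say $\hat{f}_n$, is shown to be convergent, the identity $\hat{f}_1+\cdots+\hat{f}_{n-1}=\hat{h}-\hat{f}_n\in E\{\xx\}$ has the same form with $n-1$ summands and the induction hypothesis finishes the job.

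The heart of the argument is then to produce one convergent summand. My strategy is to use the Borel--Laplace machinery of Section \ref{Borel-Laplace analysis for monomial summability}: fix $\boldsymbol{s}\in\sigma_d$ with strictly positive entries, let $\boldsymbol{\lambda}_n$ and $\boldsymbol{\lambda}_n'$ be the weights attached to $\xx^{\aa_n}$-$k_n$-$\boldsymbol{s}$-summability, and apply the formal transform $\hat{\mathcal{B}}_{\boldsymbol{\lambda}_n}$ to $\hat{f}_n=\hat{h}-\sum_{j<n}\hat{f}_j$. The image $\hat{\mathcal{B}}_{\boldsymbol{\lambda}_n}(\hat{h})$ is entire of exponential order zero, and in every non-singular direction of $\hat{f}_n$ the image $\hat{\mathcal{B}}_{\boldsymbol{\lambda}_n}(\hat{f}_n)$ extends analytically with exponential growth of order at most $\boldsymbol{\lambda}_n'$ by Theorem \ref{Monomial summability t Borel Laplace}. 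For each $j<n$, since $k_j\aa_j\neq k_n\aa_n$, the Gevrey type of $\hat{f}_j$ in the monomial $\xx^{\aa_n}$, computed via Lemma \ref{Bounds for formal gevrey series}(2), is different from $1/k_n$, so $\hat{\mathcal{B}}_{\boldsymbol{\lambda}_n}(\hat{f}_j)$ carries a growth profile of a different Gevrey scale. A Phragm\'{e}n--Lindel\"{o}f argument in the spirit of the proof of Theorem \ref{tauberian general case}(1) then upgrades the exponential bound on $\hat{\mathcal{B}}_{\boldsymbol{\lambda}_n}(\hat{f}_n)$ to every direction, and Proposition \ref{No sing direction implies convergence} yields $\hat{f}_n\in E\{\xx\}$.

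The main obstacle is that the relation between $k_n\aa_n$ and the remaining $k_j\aa_j$ need not be coordinatewise, so the auxiliary transforms $\hat{\mathcal{B}}_{\boldsymbol{\lambda}_n}(\hat{f}_j)$ may be entire of various exponential orders, or purely formal divergent series, in a mixed fashion that complicates the Phragm\'{e}n--Lindel\"{o}f step. To bypass this I would preprocess the identity with a sequence of monomial blow-ups $\pi_{ij}$ and invoke Theorem \ref{monomial sum and blowups} to transport $\xx^{\aa}$-$k$-summability along each chart, reducing to a situation in which the weighted monomials $k_j\aa_j$ are totally ordered coordinatewise. In that reduced setting the interpolation only has to compare $\hat{f}_n$ against the two $\hat{f}_j$'s whose Gevrey levels in $\xx^{\aa_n}$ straddle $1/k_n$, and the argument collapses to a chain of two-term Tauberian comparisons of the type already contained in Theorem \ref{tauberian general case}(2), closing the induction.
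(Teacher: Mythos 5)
Your overall architecture (induction on $n$, peeling off one summand, monomial blow-ups to make the levels comparable) is indeed the intended route — the paper only cites the induction-on-$n$ proof of \cite{CM} — but the step that is supposed to produce one convergent summand has two concrete flaws. First, the assertion that $k_j\aa_j\neq k_n\aa_n$ forces the Gevrey type of $\hat{f}_j$ in the monomial $\xx^{\aa_n}$ to be different from $1/k_n$ is false: by Lemma \ref{Bounds for formal gevrey series}(2) that type is $\max_{l}\{\a_{n,l}/\a_{j,l}\}/k_j$, which equals $1/k_n$ for instance when $d=2$, $k_j=k_n=1$, $\aa_n=(1,1)$, $\aa_j=(1,2)$, although $k_j\aa_j\neq k_n\aa_n$. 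Moreover ``different'' is of no use: only the strict inequality $<1/k_n$, i.e.\ $k_n\a_{n,l}<k_j\a_{j,l}$ for \emph{every} $l$, makes $\hat{\mathcal{B}}_{\boldsymbol{\lambda}_n}(\hat{f}_j)$ an analytic object of subcritical exponential growth; otherwise it is in general a divergent series and no Phragm\'{e}n--Lindel\"{o}f sandwich is available.

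Second, the closing device is misidentified. Theorem \ref{tauberian general case}(2) compares two summability methods applied to one and the same series, whereas here $\hat{f}_n=-\sum_{j<n}\hat{f}_j$ is only known to be $\xx^{\aa_n}$--$k_n$--summable: a sum of series summable for different pairs $(\aa_j,k_j)$ is not known to be summable for any single pair, so no ``chain of two-term comparisons'' via part (2), and no comparison with the two ``straddling'' summands, can be run. What actually closes the induction is part (1) of Theorem \ref{tauberian general case}: after blow-ups that make one weighted monomial, say $k_i\aa_i$, \emph{strictly smallest in every coordinate} (a reduction you assert but must justify, e.g.\ by iterating the $\pi_{lm}$ with explicit exponents as in the mixed case of the proof of Theorem \ref{tauberian general case}(2); note Theorem \ref{monomial sum and blowups} transports summability forward and Lemma \ref{f(xe,e) s-Gevrey in x^pe^(p+q)}(1) pulls convergence back), every other summand is of Gevrey order $<1/k_i$ in $\xx^{\aa_i}$ by Lemma \ref{Bounds for formal gevrey series}(2), hence so is their sum $-\hat{f}_i$, and Theorem \ref{tauberian general case}(1) (whose proof is exactly the Borel--Phragm\'{e}n--Lindel\"{o}f argument you sketch, combined with Proposition \ref{No sing direction implies convergence}) gives $\hat{f}_i\in E\{\xx\}$; one then absorbs $\hat{f}_i$ and induction applies to the remaining $n-1$ series, whose levels are still pairwise distinct. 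So the plan is repairable by redirecting your ``heart'' paragraph to the minimal-level summand and replacing the appeal to part (2) by part (1), but as written the key step does not go through.
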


Given $\hat{f}\in\mathcal{O}'_d(E)$, we can also consider the case where $\hat{f}$ is $k$--summable in a monomial in some variables with coefficients which are holomorphic maps in the remaining ones and compare the summability phenomena we have at our disposal. As a matter of fact, in this situation the methods are again incompatible and the proofs can be reduced to Theorem \ref{tauberian general case} using monomial transformations. The key statement is the following.

\begin{prop}\label{summ and blowups}Let  $J\subsetneq [1,d]$ be non-empty, $n=\#J$, and let $\hat{f}\in \mathcal{O}_b(D_R^{d-n},E)\{\xx_J\}^{\aa_J}_{1/k,\theta}$ with  $\xx_J^{\aa_J}$--$k$--sum $f$ in the direction $\theta$. Then $\hat{f}\circ\pi_{ij}$ belongs to $ \mathcal{O}_b(D_{R'}^{d-n},E)\{\xx_J\}^{\aa_J}_{1/k,\theta}$ if $j\not\in J$, and belongs to $\mathcal{O}_b(D_{R'}^{d-n-1},E)\{\xx_{J\cup\{i\}}\}^{\aa'_{J\cup\{i\}}}_{1/k,\theta}$ if $j\in J$, where $\xx_{J\cup\{i\}}^{\aa'_{J\cup\{i\}}}=x_i^{\a_j}\xx_J^{\aa_J}$, for small $0<R'\leq R$. In both cases the corresponding sum is given by $f\circ\pi_{i,j}$.
\end{prop}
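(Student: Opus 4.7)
The plan is to reduce both assertions to a single application of Proposition~\ref{equiv f iff f_n for x^pe^q}, working with Banach-space valued maps. Since $\hat{f}$ is $\xx_J^{\aa_J}$--$k$--summable with sum $f$ in direction $\theta$ (with coefficients in $E'=\mathcal{O}_b(D_R^{d-n},E)$), Proposition~\ref{equiv f iff f_n for x^pe^q} produces $\rho>0$ and bounded holomorphic approximants $F_N\colon D_\rho^n\times D_R^{d-n}\to E$ with $1/k$--Gevrey bounds, such that on every proper subsector
\begin{equation*}
\|f(\xx_J,\xx_{J^c})-F_N(\xx_J,\xx_{J^c})\|\leq CA^N N!^{1/k}\,|\xx_J^{N\aa_J}|,
\end{equation*}
uniformly in $\xx_{J^c}\in D_R^{d-n}$. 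The strategy is then to compose with $\pi_{ij}$, verify that $F_N\circ\pi_{ij}$ sits in a suitable polydisc with the same Gevrey bounds, and invoke the converse of the proposition in the appropriate new Banach space.

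In Case A ($j\notin J$), the map $\pi_{ij}$ fixes $\xx_J$ and sends $x_j$ to $x_ix_j$. The factor $x_i$ is uniformly bounded either by $R$ (if $i\notin J$) or by $r^{1/\a_i}$ (if $i\in J$, using $|x_i|^{\a_i}<r$ on the sector $\Pi_{\aa_J}$), so $F_N\circ\pi_{ij}$ is a bounded holomorphic map $D_\rho^n\times D_{R'}^{d-n}\to E$ for $R'$ small enough. Since $\xx_J^{N\aa_J}$ is unchanged, the estimate above pulls back verbatim, and Proposition~\ref{equiv f iff f_n for x^pe^q} applied with $\mathcal{O}_b(D_{R'}^{d-n},E)$ yields $f\circ\pi_{ij}\sim_{1/k}^{\aa_J}\hat{f}\circ\pi_{ij}$ on the same sector, whose opening still exceeds $\pi/k$; in particular the asserted summability holds with sum $f\circ\pi_{ij}$.

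In Case B ($j\in J$, forcing $i\notin J$ so that $J\cup\{i\}$ is genuinely larger), the key identity is
\begin{equation*}
(\pi_{ij}(\xx))_J^{\aa_J}=(x_ix_j)^{\a_j}\prod_{l\in J\setminus\{j\}} x_l^{\a_l}=x_i^{\a_j}\xx_J^{\aa_J}=\xx_{J\cup\{i\}}^{\aa'_{J\cup\{i\}}},
\end{equation*}
which shows that every sector $\Pi_{\aa'_{J\cup\{i\}}}(a,b,r')$ is mapped by $\pi_{ij}$ into the original sector $\Pi_{\aa_J}(a,b,r)$ provided $r'\leq\sqrt{r}$; the size conditions on $|x_i|$ and $|x_j|$ built into the new sector also let us arrange $\pi_{ij}(\xx_J)\in D_\rho^n$ and $(x_i,\xx_{J^c\setminus\{i\}})\in D_R\times D_{R'}^{d-n-1}$ for suitably small radii. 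The estimate above then transfers to
\begin{equation*}
\|f\circ\pi_{ij}(\xx)-F_N\circ\pi_{ij}(\xx)\|\leq CA^N N!^{1/k}\,|\xx_{J\cup\{i\}}^{N\aa'_{J\cup\{i\}}}|,
\end{equation*}
and a second application of Proposition~\ref{equiv f iff f_n for x^pe^q} with Banach space $\mathcal{O}_b(D_{R'}^{d-n-1},E)$ and monomial $\xx_{J\cup\{i\}}^{\aa'_{J\cup\{i\}}}$ delivers the claimed $\xx_{J\cup\{i\}}^{\aa'_{J\cup\{i\}}}$--$k$--summability with sum $f\circ\pi_{ij}$.

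The main technical point I expect to require care is identifying the formal series produced by the converse direction of Proposition~\ref{equiv f iff f_n for x^pe^q} with $\hat{f}\circ\pi_{ij}$ regarded as an element of the new space (a series in $\xx_J$, respectively $\xx_{J\cup\{i\}}$, with coefficients in the appropriate Banach algebra). By uniqueness of monomial asymptotic expansions, this boils down to checking that the Taylor series of $F_N\circ\pi_{ij}$ is obtained from that of $F_N$ by formal substitution $x_j\mapsto x_ix_j$ followed by regrouping in the $\mathfrak{m}$--adic topology of $E[[\xx]]$, which is precisely the definition of $\hat{f}\circ\pi_{ij}$.
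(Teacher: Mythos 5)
Your proposal is correct and follows essentially the same route as the paper's proof: both pull back the Gevrey estimates for the approximating sequence of Proposition \ref{equiv f iff f_n for x^pe^q} (the partial sums $\sum_{n<N} f_{\aa_J,n}\xx_J^{n\aa_J}$) through $\pi_{ij}$, use that $\xx_J^{\aa_J}\circ\pi_{ij}$ equals $\xx_J^{\aa_J}$ if $j\not\in J$ and $x_i^{\a_j}\xx_J^{\aa_J}$ if $j\in J$, shrink the radii so that $\pi_{ij}(\xx)$ stays in the original domain, and then invoke Proposition \ref{equiv f iff f_n for x^pe^q} again in the new Banach space. No essential difference from the paper's argument.
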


\begin{proof}Let $\hat{f}=\sum_{\bb_J\in\N^J} f_{J,\bb_J}(\xx_{J^c})\xx_J^{\bb_J}$ as in equation (\ref{Decomposition f J}), where $f_{J,\bb_J}\in \mathcal{O}_b(D_R^{d-n},E)$ and $\hat{T}_{\aa_J}(\hat{f})=\sum_{n=0}^\infty  f_{\aa_J,n}(\xx) \xx_J^{n\aa_J}$. If $f\sim_{1/k}^{\aa_J}\hat{f}$ on $S_{\aa_J}(\theta,b-a,r)$, $b-a>\pi/k$, for every subsector $S'_{\aa_J}$ of $S_{\aa_J}$  we can find constants $C,A>0$ such that for every $N\in\N$ we have  $$\left\|f(\xx)-\sum_{n=0}^{N-1} f_{\aa_J,n}(\xx)\xx_J^{n\aa_J}\right\|\leq CA^NN!^{\frac{1}{k}}|\xx_J^{N\aa_J}|, \quad\text{ on } \{\xx\in\C^d \,|\, \xx_J\in S'_{\aa_J}, \xx_{J^c}\in D_R^{d-n}\}.$$ Note that $\xx_J^{\aa_J}\circ \pi_{i,j}=\xx_J^{\aa_J}$ if $j\not\in J$ and $\xx_J^{\aa_J}\circ \pi_{i,j}=x_i^{\a_j}\xx_J^{\aa_J}$ if $j\in J$. Then both statements follow with the aid of Proposition \ref{equiv f iff f_n for x^pe^q} by replacing $\xx$ by $\pi_{ij}(\xx)$ in the previous inequality as long as we choose the radii $r,R$ small enough such that $\pi_{ij}(\xx)\in \{\xx\in\C^d \,|\, \xx_J\in S'_{\aa_J}, \xx_{J^c}\in D_R^{d-n}\}$.
\end{proof}


\begin{teor}\label{tauberian for one variable and monomials} Let $I,J\subseteq [1,d]$ be non-empty sets, $n=\#J$, $m=\#I$, and consider $\aa_J\in(\N^+)^m$, $\aa_I'\in(\N^+)^n$, and $k,k'>0$. Then $$\mathcal{O}_b(D_R^{d-n},E)\{\xx_J\}^{\aa_J}_{1/k}\cap \mathcal{O}_b(D_R^{d-m},E)\{\xx_I\}^{\aa_I'}_{1/k'}=E\{\xx\},$$ except in the case $J=I$ and $k\aa_J=k'\aa_I'$, where the spaces coincide. 
\end{teor}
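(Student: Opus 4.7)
The plan is to reduce the claim to Theorem \ref{tauberian general case} (2) by a blow-up argument that combines Proposition \ref{summ and blowups} with Theorem \ref{monomial sum and blowups}. I split the argument according to whether $J=I$.

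First, for $J=I$, I observe that $\tilde E := \mathcal{O}_b(D_R^{d-n}, E)$ is itself a complex Banach space, so that $\hat f$ belongs to $\tilde E\{\xx_J\}_{1/k}^{\aa_J}\cap \tilde E\{\xx_J\}_{1/k'}^{\aa_I'}$. Applying Theorem \ref{tauberian general case} (2) over $\tilde E$ (its proof only uses that $E$ is a complex Banach space) shows that, unless $k\aa_J=k'\aa_I'$ (the exceptional coincidence of the two spaces), the intersection equals $\tilde E\{\xx_J\}$; Taylor-expanding each coefficient in $\xx_{J^c}$ and using Cauchy's estimates then identifies this space with $E\{\xx\}$ on a sufficiently small polydisc.

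For $J\neq I$, I proceed in two phases of blow-ups. In \emph{Phase A}, I fix $j_0\in J$ and apply $\pi_{\ell,j_0}$ successively for each $\ell\in J^c$. Iterating Proposition \ref{summ and blowups}, the first condition is promoted to a full monomial summability $\hat f\circ\Pi_1\in E\{\xx\}^{\aa^*}_{1/k,\theta}$, with $\aa^*_j=\a_j$ for $j\in J$ and $\aa^*_\ell=\a_{j_0}$ for $\ell\in J^c$; meanwhile the second condition persists as a (possibly enlarged) partial monomial summability on some set $I\subseteq I'\subseteq I\cup J^c$. In \emph{Phase B}, I fix $i_0$ in this set (picking $i_0\in J\cap I$ when this is nonempty) and apply $\pi_{\ell,i_0}$ for each remaining $\ell\notin I'$. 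Proposition \ref{summ and blowups} promotes the second condition to a full summability $\hat g:=\hat f\circ\Pi_2\circ\Pi_1\in E\{\xx\}^{\aa'^*}_{1/k',\theta}$, while Theorem \ref{monomial sum and blowups} preserves the fullness of the first condition with updated exponents $\aa^{**}_\ell=\aa^*_\ell+\aa^*_{i_0}$ for $\ell\notin I'$ and $\aa^{**}_\ell=\aa^*_\ell$ otherwise. I then apply Theorem \ref{tauberian general case} (2) to $\hat g$: provided $k\aa^{**}\neq k'\aa'^*$, we obtain $\hat g\in E\{\xx\}$, and iterated application of Lemma \ref{f(xe,e) s-Gevrey in x^pe^(p+q)} (1) transports convergence back to $\hat f$.

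The main obstacle is verifying the non-proportionality $k\aa^{**}\neq k'\aa'^*$ under the standing assumption $J\neq I$. I anticipate a short case analysis: when $j_0\in J\cap I$ and $i_0=j_0$, the coordinate-wise equations at indices in $J\cap I$ give $k\a_{j_0}=k'\a_{j_0}'$, while those at any $\ell\in J\setminus I$ (nonempty in the corresponding subcase) give $k(\a_\ell+\a_{j_0})=k'\a_{j_0}'$, forcing the contradiction $k\a_\ell=0$. The subcases $I\setminus J\neq\emptyset$ and $J\cap I=\emptyset$ are handled symmetrically, comparing the equations at $\ell=i_0$ and at some $\ell$ in the non-trivial part of $I\Delta J$. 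A secondary, mostly bookkeeping, subtlety is that some blow-ups in the two phases act on indices already in the relevant summability set (\textit{internal} blow-ups), a configuration not literally included in the statement of Proposition \ref{summ and blowups} but readily covered by combining it with Theorem \ref{monomial sum and blowups} to update the exponents at each step.
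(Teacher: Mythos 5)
Your argument is correct and rests on the same pillars as the paper's proof: Proposition \ref{summ and blowups} (plus the easy direction of Theorem \ref{monomial sum and blowups}) to promote both partial summabilities to full monomial summabilities after blow-ups, Theorem \ref{tauberian general case} (2) applied to the transformed series, and Lemma \ref{f(xe,e) s-Gevrey in x^pe^(p+q)} (1) to carry convergence back to $\hat{f}$; your $J=I$ case is literally the paper's first case. The difference lies in the combinatorics of the blow-ups. The paper does not saturate to all $d$ variables: it blows up only over $I\triangle J$, keeping the variables outside $I\cup J$ as bounded holomorphic coefficients, and it splits $J\neq I$ into the nested case $J\subsetneq I$ (centers at an index of $J$, non-proportionality read from $\a_1/\a_1'\neq \a_1/(\a_m'+\a_1')$) and the generic case, where it takes $j_0\in J\setminus I$ and $i_0\in I\setminus J$ --- centers in the symmetric difference rather than in $J\cap I$ --- so that the mixed-type case of Proposition \ref{summ and blowups} applies almost verbatim and proportionality fails at the two centers ($i_0$-exponents $\a_{j_0}$ vs.\ $\a_{i_0}'$, $j_0$-exponents $2\a_{j_0}$ vs.\ $\a_{i_0}'$). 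Your choice $j_0=i_0\in J\cap I$ when possible, and the two-phase saturation over $[1,d]$, generate more ``internal'' blow-ups, but, as you observe, these are covered by the forward direction of Theorem \ref{monomial sum and blowups} applied over the coefficient Banach space --- a step the paper itself uses implicitly in its nested case, where the blow-ups $\pi_{\ell,1}$ have both indices inside $I$. I verified your non-proportionality equations in the spelled-out subcase and in the remaining ones (for $J\subsetneq I$ one compares the indices $j_0$ and $\ell\in I\setminus J$, getting $k\a_{j_0}=k'\a_{j_0}'$ against $k\a_{j_0}=k'(\a_\ell'+\a_{j_0}')$; for $J\cap I=\emptyset$ one compares $i_0$ and $j_0$, getting $k\a_{j_0}=k'\a_{i_0}'$ against $2k\a_{j_0}=k'\a_{i_0}'$), and they all yield the required contradiction. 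Two cosmetic points: the composed series should be $\hat{f}\circ\Pi_1\circ\Pi_2$ (Phase A acts first), and the direction-wise conclusions should be collected over all but the finitely many singular directions before invoking Theorem \ref{tauberian general case} (2); neither affects the substance.
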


\begin{proof}We divide the proof in several cases. First, if $J=I$, the result follows from Theorem \ref{tauberian general case} applied to the space $\mathcal{O}_b(D_R^{d-m},E)$ (resp. $E$ if $m=d$). Second, assume $J\subsetneq I$. Changing the order of coordinates if necessary, we can assume $J=\{1,\dots,n\}$ and $I=\{1,\dots,m\}$. Then $\hat{f}_1=\hat{f}\circ \pi_{m,1}\circ\cdots\circ \pi_{n+1,1}$ is $\xx_J^{\aa_J}(x_{n+1}\cdots x_m)^{\a_1}$--$k$--summable and $\xx_I^{\aa_I'}(x_{n+1}\cdots x_m)^{\beta_1}$--$k'$--summable, both with coefficients in $\mathcal{O}_b(D_R^{d-m},E)$ (resp. $E$ if $m=d$). We can apply the previous case to conclude that $\hat{f}_1$ and thus $\hat{f}$ are convergent since  $\a_1/(\a_m'+\a_1')=\a_1/\a_1'$ implies $\a_m'=0$, which is not the case. Finally, we can assume there are $j_0\in J\setminus I$ and $i_0\in I\setminus J$, and we consider the series $$\hat{f}_2=\hat{f}\circ \textstyle^\circ\prod_{i\in I\setminus J}\pi_{ij_0} \circ ^\circ\prod_{j\in J\setminus I} \pi_{ji_0}.$$ Here $^\circ\prod$ denotes the composition product, that in this case is independent of the order because the monomial transformations involved commute since $j_0\neq i_0$.  Then $\hat{f}_2$ is $\xx_I^{\aa_I'}\left(\prod_{j\in J\setminus I} x_j \right)^{\a_{i_0}'}$--$k'$--summable and $\xx_J^{\aa_J}\left(\prod_{i\in I\setminus J} x_i \cdot \prod_{j\in J\setminus I} x_j \right)^{\a_{j_0}}$ --$k$--summable with coefficients in $\mathcal{O}_b(D_R^{d-\#(I\cup J)},E)$ (resp. $E$ if $J\cup I=[1,d]$). Since the $i_0$-components (resp. $j_0$-components) of these monomials are $\a_{i_0}'$ and $\a_{j_0}$ (resp. $\a_{i_0}'$ and $2\a_{j_0}$) respectively, then $\a_{j_0}/\a_{i_0}'\neq 2\a_{j_0}/\a_{i_0}'$ and therefore $\hat{f}_2$ and $\hat{f}$ are convergent as we wanted to prove.
\end{proof}

Having this result at hand it is possible to formulate and prove a statement similar to the one in Theorem \ref{series non monomial sumable}. This provides more examples of series that cannot be summed with the methods we have studied along this work. In fact, this has been done recently in \cite{CMS} in the more general setting of $k$--summability in analytic germs and we refer the reader to this work for a complete proof of these facts.

\section{Monomial summability of a family of singular perturbed PDEs}\label{Monomial summability of a family of singular perturbed PDEs}

Summability in a monomial is useful to study formal solutions of doubly singular equations, i.e., singularly perturbed ordinary differential equations of the form  $$\varepsilon^q x^{p+1}\frac{\d \yy}{\d x}=\boldsymbol{F}(x,\varepsilon,\yy),$$ where $p,q\in\N^+$ and  $\boldsymbol{F}$ is a $\C^N$--valued holomorphic map in some neighborhood of $(0,0,\00)\in \C\times\C\times \C^N$. If $\frac{\d\boldsymbol{F}}{\d \yy}(0,0,\00)$ is an invertible matrix, this system has a unique formal power series solution and it is $x^p\varepsilon^q$--$1$--summable, see \cite[Thm. 5.2]{Monomial summ}. The technique employed to prove this result in the case $p=q=1$ is to apply the change of variables $t=x\varepsilon$, to obtain an equation involving $t$ and $\varepsilon$. The new equation is then solved on large sectors and the differences of such solutions, in their common domains, are studied in order to apply Ramis--Sibuya's theorem. The general case follows after rank reduction.

The goal of this section is to generalize the previous result by establishing the $\boldsymbol{x}^{\boldsymbol{\a}}\boldsymbol{\varepsilon}^{\boldsymbol{\a'}}$--$1$--summability  of the unique formal power series solution of the singularly perturbed partial differential equation (\ref{PDEG}) explained in the Introduction. By hypothesis, the $n$--tuple $\boldsymbol{\mu}=(\mu_1,\dots,\mu_n)$ has entries, up to a non-zero multiple scalar, positive real numbers. Then, dividing equation (\ref{PDEG}) by $\left<\boldsymbol{\mu},\aa\right>$, it is sufficient to study the singularly perturbed partial differential equation  \begin{equation}\label{SPPDE1}\boldsymbol{\varepsilon}^{\boldsymbol{\a'}}X_{\boldsymbol{\lambda}}(\yy)(\xx,\ee)= \boldsymbol{\varepsilon}^{\boldsymbol{\a'}}\xx^{\aa}\left( \frac{s_1}{\a_1} x_1 \frac{\d \boldsymbol{y}}{\d x_1}+\cdots+ \frac{s_n}{\a_n} x_n \frac{\d \boldsymbol{y}}{\d x_n}\right)=\boldsymbol{F}(\boldsymbol{x},\boldsymbol{\varepsilon},\boldsymbol{y}),\end{equation} where $\boldsymbol{x}=(x_1,\dots,x_n), \boldsymbol{\varepsilon}=(\varepsilon_1,\dots,\varepsilon_m)$  are tuples of complex variables, $\boldsymbol{\alpha}=(\alpha_1,\dots,\alpha_n)$,  $\boldsymbol{\a'}=(\a_1',\dots,\a_m')$ are tuples of positive integers, $\boldsymbol{\lambda}=\left(\frac{s_1}{\a_1},\dots,\frac{s_n}{\a_n}\right)$ where the   $s_j/\a_j=\mu_j/\left<\boldsymbol{\mu},\aa\right>>0$ satisfy $s_1+\cdots+s_n=1$,   $\boldsymbol{F}=\left<\boldsymbol{\mu},\aa\right>^{-1}\boldsymbol{G}$ is a $\C^N$--valued holomorphic map in some neighborhood of $(\00,\00,\00)\in\C^n\times\C^m\times \C^N$ and $A_0:=\frac{\partial \boldsymbol{F}}{\partial \yy}(\00,\00,\boldsymbol{0})$ is an invertible matrix.

We will apply directly the Borel-Laplace analysis developed in Section \ref{Borel-Laplace analysis for monomial summability}, based on the methods of one variable, see e.g., \cite{Costin2}. The existence of the unique formal solution is a straightforward result. To determine the Gevrey type of this solution we can use a variant of Nagumo norms, as the ones used in \cite{CDRSS}. For the summability, we will study the convolution equation obtained from (\ref{SPPDE1}) after applying the adequate Borel transformation. After introducing suitable spaces of analytic functions and norms, we will solve the convolution equation using the Banach fixed-point theorem.

\begin{teor}\label{Main result sum mon for PDE}Consider the singularly perturbed partial differential equation (\ref{PDEG}). If $\boldsymbol{G}(\00,\00,\00)=\00$ and $B_0=\frac{\partial \boldsymbol{G}}{\partial \yy}(\00,\00,\boldsymbol{0})$ is an invertible matrix, then (\ref{PDEG}) has a unique formal power series solution $\widehat{\yy}\in\C[[\xx,\boldsymbol{\varepsilon}]]^N$ and it is $\xx^{\boldsymbol{\a}}\boldsymbol{\varepsilon}^{\boldsymbol{\a'}}$--$1$--summable, with possible singular directions  determined by the equation $$\det\left(\left<\boldsymbol{\mu},\aa\right>\xxi^{\aa}\boldsymbol{\eta}^{\aa'}I_N-B_0\right)=0,$$ in the $(\xxi,\boldsymbol{\eta})$--Borel space. Here $I_N$ denotes  the identity matrix of size $N$.

\end{teor}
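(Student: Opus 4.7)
The plan is to follow the strategy sketched just before the theorem: reduce to the equivalent form \eqref{SPPDE1}, construct the formal solution and show it lies in $\C[[\boldsymbol{x},\boldsymbol{\varepsilon}]]_1^{(\boldsymbol{\a},\boldsymbol{\a}')}$, and then run a direct Borel--Laplace analysis with a weight $\boldsymbol{s}\in\overline{\sigma_{n+m}}$ that is supported only on the $\boldsymbol{x}$--slots. The singular directions announced in the theorem will emerge as the directions along which the characteristic resolvent fails to be bounded in the Borel plane.

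\textbf{Step 1 (Formal solution and Gevrey type).} Expand $\hat{\boldsymbol{y}}=\sum a_{\boldsymbol{\beta},\boldsymbol{\beta}'}\boldsymbol{x}^{\boldsymbol{\beta}}\boldsymbol{\varepsilon}^{\boldsymbol{\beta}'}$ and substitute in \eqref{SPPDE1}. Evaluating at $(\boldsymbol{x},\boldsymbol{\varepsilon})=(\boldsymbol{0},\boldsymbol{0})$ gives $\boldsymbol{F}(\boldsymbol{0},\boldsymbol{0},\hat{\boldsymbol{y}}(\boldsymbol{0},\boldsymbol{0}))=\boldsymbol{0}$, so invertibility of $A_0=B_0/\langle\boldsymbol{\mu},\boldsymbol{\a}\rangle$ and the implicit function theorem force $\hat{\boldsymbol{y}}(\boldsymbol{0},\boldsymbol{0})=\boldsymbol{0}$. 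The remaining coefficients are then determined uniquely by a recursion whose invertible matrix is (essentially) $A_0$. To prove the $1$--Gevrey character in the monomial $\boldsymbol{x}^{\boldsymbol{\a}}\boldsymbol{\varepsilon}^{\boldsymbol{\a}'}$, write $\hat{T}_{(\boldsymbol{\a},\boldsymbol{\a}')}(\hat{\boldsymbol{y}})(t)=\sum_{k\geq 0} \boldsymbol{y}_k(\boldsymbol{x},\boldsymbol{\varepsilon})\,t^k$ with $\boldsymbol{y}_k\in\mathcal{E}^{(\boldsymbol{\a},\boldsymbol{\a}')}_r$ and derive a recurrence of the shape $\langle\boldsymbol{\mu},\boldsymbol{\a}\rangle\,k\,\boldsymbol{y}_k = A_0^{-1}\bigl[\text{polynomial combination of }\boldsymbol{y}_0,\dots,\boldsymbol{y}_{k-1}\text{ and their }x_j\text{--derivatives}\bigr]$. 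A variant of Nagumo norms on the family $\mathcal{E}^{(\boldsymbol{\a},\boldsymbol{\a}')}_\rho$, $\rho<r$, absorbs the loss caused by each application of $x_j\partial_{x_j}$ and, together with a standard majorant argument, yields $\|\boldsymbol{y}_k\|_r\leq C A^k\, k!$, i.e.\ $\hat{\boldsymbol{y}}\in\C[[\boldsymbol{x},\boldsymbol{\varepsilon}]]_1^{(\boldsymbol{\a},\boldsymbol{\a}')}$.

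\textbf{Step 2 (Borel transform and convolution equation).} Fix a direction $\theta$ not satisfying the characteristic equation and choose
\begin{equation*}
\boldsymbol{s}=(s_1,\dots,s_n,0,\dots,0)\in\overline{\sigma_{n+m}},\qquad s_j=\frac{\mu_j}{\langle\boldsymbol{\mu},\boldsymbol{\a}\rangle},
\end{equation*}
so that $\boldsymbol{\lambda}=(s_j/\a_j)_{j\leq n}$ in the $\boldsymbol{x}$--slots and $\boldsymbol{0}$ in the $\boldsymbol{\varepsilon}$--slots. The improvement obtained in Section \ref{Borel-Laplace analysis for monomial summability} (allowing zero weights) is essential here, since it means that $\boldsymbol{\eta}=\boldsymbol{\varepsilon}$ is left unchanged by the transforms and $X_{\boldsymbol{\lambda}}=\boldsymbol{x}^{\boldsymbol{\a}}\sum s_j/\a_j\,x_j\partial_{x_j}$. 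Splitting $\hat{\boldsymbol{y}}=\hat{\boldsymbol{y}}_L+\hat{\boldsymbol{y}}_H$ where $\hat{\boldsymbol{y}}_L=\sum_{\boldsymbol{\a}\not\leq\boldsymbol{\beta}_{J_{\boldsymbol{s}}}} a_{\boldsymbol{\beta},\boldsymbol{\beta}'}\boldsymbol{x}^{\boldsymbol{\beta}}\boldsymbol{\varepsilon}^{\boldsymbol{\beta}'}$ is analytic at the origin, applying $\hat{\mathcal{B}}_{\boldsymbol{\lambda}}$ to \eqref{SPPDE1} through \eqref{Borel s1 s2 k y derivadas} and \eqref{conv. product}, and Taylor-expanding $\boldsymbol{F}(\boldsymbol{x},\boldsymbol{\varepsilon},\boldsymbol{y})=\boldsymbol{F}(\boldsymbol{x},\boldsymbol{\varepsilon},\boldsymbol{0})+\partial_{\boldsymbol{y}}\boldsymbol{F}(\boldsymbol{x},\boldsymbol{\varepsilon},\boldsymbol{0})\boldsymbol{y}+\boldsymbol{H}(\boldsymbol{x},\boldsymbol{\varepsilon},\boldsymbol{y})$ with $\boldsymbol{H}$ quadratic in $\boldsymbol{y}$, the quantity $\varphi=\hat{\mathcal{B}}_{\boldsymbol{\lambda}}(\hat{\boldsymbol{y}}_H)$ satisfies a convolution equation of the shape
\begin{equation*}
\bigl(\boldsymbol{\xi}^{\boldsymbol{\a}}\boldsymbol{\eta}^{\boldsymbol{\a}'}I_N-A_0\bigr)\varphi(\boldsymbol{\xi},\boldsymbol{\eta})=\boldsymbol{R}(\boldsymbol{\xi},\boldsymbol{\eta})+\mathcal{A}(\boldsymbol{\xi},\boldsymbol{\eta})\ast_{\boldsymbol{\lambda}}\varphi+\mathcal{Q}_{\boldsymbol{\lambda}}(\varphi,\varphi),
\end{equation*}
where $\boldsymbol{R}$ and $\mathcal{A}$ are convergent at the origin (coming from Borel transforms of known analytic data built from $\hat{\boldsymbol{y}}_L$ and the Taylor coefficients of $\boldsymbol{F}$) and $\mathcal{Q}_{\boldsymbol{\lambda}}$ is a bilinear convolution operator encoding $\boldsymbol{H}$.

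\textbf{Step 3 (Fixed point, Laplace summation, conclusion).} For $\theta$ avoiding the arguments of the eigenvalues of $A_0$ (the singular directions in the statement), the resolvent $(\boldsymbol{\xi}^{\boldsymbol{\a}}\boldsymbol{\eta}^{\boldsymbol{\a}'}I_N-A_0)^{-1}$ is holomorphic and bounded by a controlled function on $S^{\boldsymbol{s}}_{(\boldsymbol{\a},\boldsymbol{\a}')}(\theta,2\epsilon,R)$ for $\epsilon>0$ small and $R>0$ possibly small. Work in the Banach space of continuous $\C^N$--valued functions on this domain, holomorphic inside, equipped with a weighted norm $\|\varphi\|=\sup \|\varphi(\boldsymbol{\xi},\boldsymbol{\eta})\|\exp(-M R_{\boldsymbol{\lambda}'}(\boldsymbol{\xi},\boldsymbol{\eta}))$ that targets the exponential growth required by Definition \ref{def k-s_1,s_2-Borel sum}. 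Standard estimates on $\ast_{\boldsymbol{\lambda}}$ under such weights turn the right-hand side into a contraction once $R$ is small enough (so that $\boldsymbol{R}$ and $\mathcal{A}$ are small) and $\varphi$ is forced to vanish to sufficiently high order at the origin. The fixed point provides the analytic continuation of $\hat{\mathcal{B}}_{\boldsymbol{\lambda}}(\hat{\boldsymbol{y}}_H)$ to $S^{\boldsymbol{s}}_{(\boldsymbol{\a},\boldsymbol{\a}')}(\theta,2\epsilon,R)$ with exponential growth of order at most $\boldsymbol{\lambda}'$, so $\hat{\boldsymbol{y}}$ is $\boldsymbol{x}^{\boldsymbol{\a}}\boldsymbol{\varepsilon}^{\boldsymbol{\a}'}$--$1$--$\boldsymbol{s}$--Borel summable in the direction $\theta$. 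Theorem \ref{Monomial summability t Borel Laplace} then upgrades this to $\boldsymbol{x}^{\boldsymbol{\a}}\boldsymbol{\varepsilon}^{\boldsymbol{\a}'}$--$1$--summability, and the sum is obtained by applying $\mathcal{L}_{\boldsymbol{\lambda}}$.

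\textbf{Main obstacle.} The genuinely technical point is the fixed-point argument of Step 3: one must design a norm that simultaneously (i) makes multiplication by the resolvent $(\boldsymbol{\xi}^{\boldsymbol{\a}}\boldsymbol{\eta}^{\boldsymbol{\a}'}I_N-A_0)^{-1}$ a bounded operator (with small norm once $\theta$ is away from the singular set), (ii) is continuous under the convolution $\ast_{\boldsymbol{\lambda}}$, and (iii) tolerates the mixed geometry of the Borel plane, which is unbounded in the $\boldsymbol{\xi}$--directions but bounded in the $\boldsymbol{\eta}$--directions because the $\boldsymbol{\varepsilon}$--weights are zero. Producing such a norm and verifying strict contractivity of the nonlinear operator is where the zero-weight improvement from Section \ref{Borel-Laplace analysis for monomial summability} pays off and constitutes the heart of the argument.
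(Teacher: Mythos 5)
Your overall strategy coincides with the paper's: normalize (\ref{PDEG}) to (\ref{SPPDE1}), choose weights supported only on the $\xx$--slots so that $\ee$ is untouched by the transforms, pass to the convolution equation (\ref{CE}) in the $(\xxi,\boldsymbol{\eta})$--Borel plane, solve it by a fixed point on sectors avoiding the spectrum of $A_0$, and conclude via Theorem \ref{Monomial summability t Borel Laplace}. However, the step you yourself single out as the heart of the matter -- the contraction -- is where your sketch would fail as written. First, the smallness cannot come from shrinking $R$: the domain $S^{\boldsymbol{s}}_{(\aa,\aa')}(\theta,2\epsilon,R)$ is unbounded in the $\xxi$--directions, so reducing the radius of its bounded part does not make $\mathcal{B}(c)$ or $\mathcal{B}(A-A(\00,\cdot))$ small in any sup--type weighted norm taken over the whole sector. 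Second, a plain weight $e^{-M R_{\boldsymbol{\lambda'}}(\xxi)}$ with fixed $M$ is not submultiplicative under $\ast_{(\boldsymbol{\lambda},\00)}$: convolving two functions bounded by $e^{M R_{\boldsymbol{\lambda'}}(\xxi)}$ produces an extra factor of order $|\xxi^{\aa}|$, so requirement (ii) of your ``main obstacle'' is not met by the norm you propose. The paper's device is designed precisely to fix both points: it works in the spaces $\mathcal{A}^N_\mu(S)$ of maps vanishing at $\xxi=\00$, with norm $M_0\sup |f|(1+R(\xxi)^2)e^{-\mu R(\xxi)}$, $R(\xxi)=\max_j|\xi_j|^{\a_j/s_j}$; the quadratic prefactor makes $(\mathcal{A}^1_\mu(S),\ast)$ a Banach algebra (Lemma \ref{Exp and convolucion}), Lemma \ref{Borel and convolution} gives $\|f\ast\mathcal{B}(P)\|_{N,\mu}\leq C_{\mu,\rho}\|P\|_{\rho,R}\|f\|_{\mu}$ with $C_{\mu,\rho}\to 0$ as $\mu\to+\infty$, and the focusing property $\|\mathcal{B}(c)\|_{N,\mu}\to 0$ as $\mu\to+\infty$ makes the inhomogeneous term small. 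Contraction is then obtained by letting the weight parameter $\mu$ grow (and shrinking only the $\ee$--polydisc so that $\|A(\00,\cdot)-A_0\|$ is small), not by shrinking the $\xxi$--domain; without this, or an equivalent mechanism, your fixed--point argument does not close.

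Two smaller inaccuracies: the weights must be $s_j=\a_j\mu_j/\left<\boldsymbol{\mu},\aa\right>$, so that $s_j/\a_j=\mu_j/\left<\boldsymbol{\mu},\aa\right>$ and $\sum_j s_j=1$; your $s_j=\mu_j/\left<\boldsymbol{\mu},\aa\right>$ does not lie in $\overline{\sigma_{n+m}}$ in general. Moreover the nonlinearity is not merely quadratic or bilinear: one must estimate all convolution powers $\YY^{\ast\II}$, $|\II|\geq 2$, uniformly, which is what Lemma \ref{Diference and convolution} together with the geometric--series identity in Step 4 of the paper accomplishes. Your Step 1 (Nagumo--norm estimates applied directly to the $\hat{T}_{(\aa,\aa')}$ recurrence) is a reasonable variant of the paper's two--monomial Gevrey argument and is not where the difficulty lies.
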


\begin{proof}We divide the proof in four main steps: existence and Gevrey type of the formal solution, establishment of the associated convolution equation, introduction of adequate Banach algebras and their properties, and finally, the application of the fixed point theorem.
	
\textbf{1. The formal solution.} We will consider the norms $|\yy|=\max_{1\leq j\leq N} |y_j|$ on $\C^N$ and $|A|=\max_{1\leq i\leq N} \sum_{j=1}^N |A_{ij}|$ on $\text{Mat}(N\times N,\C)$, where the notation should be clear from context. 

Let us write $$\boldsymbol{F}(\xx,\ee,\yy)=c(\xx,\ee)+A(\xx,\ee)\yy+\sum_{|\boldsymbol{I}|\geq 2}  A_{\boldsymbol{I}}(\xx,\ee)\yy^{\boldsymbol{I}},$$ as a power series in $\yy$, where $\boldsymbol{c}, A_{I}\in\mathcal{O}_b(D_r^n\times D_R^m,\C^N)$, $A \in\mathcal{O}_b(D_r^n\times D_R^m,\text{Mat}(N\times N,\C))$, for all $I=(i_1,\dots,i_N)\in\N^N$ and $\yy^{\II}=y_1^{i_1}\cdots y_N^{i_N}$. We also write $c(\xx,\ee)=\sum_{\bb\in\N^n} c_\bb(\ee) \xx^\bb=\sum_{\bb'\in\N^m} c_{\bb'}(\xx) \ee^{\bb'} $ as a convergent power series in $\xx$ (resp. in $\ee$) with coefficients $c_\bb\in \mathcal{O}_b(D^m_R,\C^N)$ (resp. $c_{\bb'}\in \mathcal{O}_b(D^n_r,\C^N)$). Since $\boldsymbol{F}$ is holomorphic we can find constants $K,\delta>0$ such that \begin{equation}\label{Gr AI}
\|A_\II\|_{r,R}:=\sup_{(\xx,\ee)\in D_r^n\times D_R^m}|A_{\boldsymbol{I}}(\xx,\ee)|\leq K\delta^{|I|}.
\end{equation} The notation $\|\cdot\|_{r,R}$ will be also used for matrix valued functions.

To find the formal solution set $\widehat{\yy}=\sum_{\bb\in\N^n} \yy_\bb(\ee) \xx^\bb$. Since $\boldsymbol{F}(\00,\00,\00)=0$ and $A_0$ is invertible we can apply the implicit function theorem to find a unique $\yy_\00(\ee)\in \mathcal{O}_b(D^m_R,\C^N)$, $\yy_\00(\00)=\00$ (with  $R$ small enough) solving the equation $\boldsymbol{F}(\00,\ee,\yy_\00(\ee))=0$. To determine the higher order terms we use the recurrence $$\left<\boldsymbol{\lambda},\bb-\aa\right> \ee^{\aa'}\yy_{\bb-\aa}(\ee)=c_\bb(\ee)+A(\00,\ee)\yy_\bb(\ee)+\text{known terms},$$ obtained by inserting $\widehat{\yy}$ in the differential equation and equating the coefficient of $\xx^\bb$. Since $A(\00,\00)=A_0$ is invertible, we may assume (by reducing $R$ again if necessary) that $A(\00,\ee)$ is invertible, for all $\ee\in D^m_R$. Thus this recurrence determines $\yy_\bb$ uniquely, and the uniqueness of $\widehat{\yy}$ follows. 

Similar computations as for the classical irregular singularities for ODEs show that $\widehat{\yy}$ is a $1$--Gevrey series in $\xx^\aa$. To determine the Gevrey order in $\ee$, we use the following variant of the Nagumo norms for higher dimensions: if $f\in\mathcal{O}(D_r^n\times D_R^m)$ and $l\in\N$, we define \begin{equation*}\label{Nagumo n} \|f\|_l:=\sup_{(\xx,\ee)\in D_r^n\times D_R^m} |f(\xx,\ee)|(r-|x_1|)^l\cdots(r-|x_n|)^l.
	\end{equation*} They satisfy the majorant inequalities $$\|f+g\|_l\leq \|f\|_l+\|g\|_l,\quad  \|fg\|_{l+k}\leq \|f\|_l\|g\|_k,\quad \left\|\frac{\d f}{\d x_j}\right\|_{l+1}\leq e(l+1)r^{n-1}\|f\|_l,$$ for all $l,k\in\N,  j=1,\dots,n$. The proof of the last inequality can be done in the same way as it is proved in \cite{CDRSS} for the Nagumo norms introduced there. Then, applying the usual majorant series technique to the recurrence PDEs $$\xx^{\aa}\sum_{j=1}^n \frac{s_j}{\a_j} x_j \frac{\d}{\d x_j}( \yy_{\bb'-\aa'})=c_{\bb'}(\xx)+A(\xx,\00)\yy_{\bb'}(\xx)+\text{known terms},$$ obtained by replacing $\widehat{\yy}=\sum_{\bb'\in\N^m} \yy_{\bb'}(\xx) \ee^{\bb'}$ into (\ref{SPPDE1}), we may conclude that $\widehat{\yy}$ is also $1$--Gevrey in $\ee^{\aa'}$. Here we have also reduced $r$ (if necessary) to guarantee that $A(\xx,\00)$ is invertible, for all $\xx\in D^n_r$. 

\textbf{2. The convolution equation.} To simplify notation we will write $\hat{\mathcal{B}}=\hat{\mathcal{B}}_{(\boldsymbol{\lambda},\00)}$ and $\ast=\ast_{(\boldsymbol{\lambda},\00)}$, where  $(\boldsymbol{\lambda},\00)\in\R_{>0}^n\times\R^m$. Also set $\boldsymbol{s}=(s_1,\dots,s_n)$ which by hypothesis belongs to $\sigma_n$.

Applying $\mathcal{B}$ to (\ref{SPPDE1}) we obtain a convolution equation, that written as a fixed point equation, it is given by \begin{align}\label{CE}
	(\xxi^\aa\boldsymbol{\eta}^{\aa'}I_N-A_0)\boldsymbol{Y}=&\mathcal{B}(c)+(\mathcal{B}(A-A(0,\boldsymbol{\eta})))\ast \boldsymbol{Y}+(A(\00,\boldsymbol{\eta})-A_0) \boldsymbol{Y}+\sum_{|\boldsymbol{I}|\geq 2} \mathcal{B}(A_{\boldsymbol{I}}-A_{\II}(\00,\boldsymbol{\eta}))\ast \boldsymbol{Y}^{\ast \boldsymbol{I}} \nonumber\\
	&+\sum_{|\boldsymbol{I}|\geq 2}  A_{\II}(\00,\boldsymbol{\eta})\boldsymbol{Y}^{\ast \boldsymbol{I}},
\end{align} in the $(\xxi,\boldsymbol{\eta})$--Borel plane. Here we write $\YY^{\ast\II}=Y_1^{\ast i_1}\ast\cdots\ast Y_N^{\ast i_N}$, and $Y_j^{\ast i_j}=Y_j\ast\cdots \ast Y_j$, $i_j$ times. 

Note that under the holomorphic change of variables $\boldsymbol{w}=\yy-\sum_{\aa\not<\bb} \yy_{\bb}(\ee)\xx^{\bb}$, we may assume that $c(\xx,\ee)=\sum_{\bb>\aa} c_\bb(\ee) \xx^\bb$, $\mathcal{B}(c)$ is holomorphic at the origin and $\mathcal{B}(c)(\00,\boldsymbol{\eta})=\00$, and so we will do it from now on.

Equation (\ref{CE}) has a unique analytic solution at the origin given by $\YY=\mathcal{B}(\widehat{\yy})$. To solve (\ref{CE}) in larger domains, we ask $\xxi^\aa\boldsymbol{\eta}^{\aa'}I_N-A_0$ to be invertible. Let $\nu_1=|\nu_1|e^{i\theta_1},\dots,\nu_N=|\nu_N|e^{i\theta_N}$ be the eigenvalues of $A_0$ repeated according to their multiplicity, all different from zero by hypothesis. We will work on domains contained in $$\Omega:=\{(\xxi,\boldsymbol{\eta})\in\C^n\times\C^m \,|\, \xxi^\aa\boldsymbol{\eta}^{\aa'}\neq \nu_j, \text{ for all } j=1,\dots,N\}.$$

\textbf{3. Some focusing spaces.} Consider  $$S:=S_{r,R'}=S_{(\aa,\aa')}^{(\boldsymbol{s},\00)}(\theta,2\epsilon,R')\cup (D_r^n\times \overline{D}_{R'^{1/\a_1'}}\times\cdots\times \overline{D}_{R'^{1/\a_m'}}),$$ where $\theta\neq \theta_j$, for all $j=1,\dots,N$, $\epsilon>0$ is chosen small such that $S\subset \Omega$, and $R'<R^{\a_j'}$, for all $j=1,\dots,m$, to ensure that $S\subset \C^n\times D_R^m$. Note that $S$ contains a polydisc around the origin.

If $\mu>0$, we introduce the spaces of holomorphic maps $$
\mathcal{A}^N_{\mu}(S):=\{\boldsymbol{f}=(f_1,\dots,f_N)\in \mathcal{O}(S,\C^N) \,|\, \boldsymbol{f}(\00,\cdot)=\00,\, \|\boldsymbol{f}\|_{N,\mu}:=\max_{1\leq j\leq N}\|f_j\|_{\mu}<+\infty \},$$
$$\|f\|_\mu:=\|f\|_{1,\mu}=M_0 \sup_{(\xxi,\boldsymbol{\eta})\in  S} |f(\xxi,\boldsymbol{\eta})|(1+R(\xxi)^2)e^{-\mu R(\xxi)},\quad f\in\mathcal{O}(S).$$ Here $R(\xxi)=\max_{1\leq j\leq n}\,|\xi_j|^{\a_j/s_j}$ and $M_0=\sup_{s>0} s(1+s^2)I(s)\approx 3.76$, where $$I(s):=\int_0^1 \frac{d\tau}{(1+s^2\tau^2 )(1+s^2(1-\tau)^2)}=\frac{2(\ln(1+s^2)+s\arctan(s))}{s^2(4+s^2)}.$$ This family of norms is an adaptation for monomials of the norms introduced in \cite[Def. 4.1]{Costin2} for one variable, useful to treat non-linear partial PDEs, see e.g., \cite{Zhang}. We refer to \cite{Costin1} for similar norms in higher dimensions. The properties we need are described in the following three technical lemmas.

\begin{lema}\label{Exp and convolucion}Let $\mu>0$, $R$ and $S$ as before. The following statements hold:\begin{enumerate}
\item  If $Q\in\mathcal{O}_b(D_R^m,\C^N)$,  then $$\|f Q\|_{N,\mu}\leq \|Q\|_{r,R}\|f\|_\mu, \quad f\in \mathcal{A}^1_{\mu}(S).$$

\item $(\mathcal{A}^1_{\mu}(S),\ast,\|\cdot\|_{\mu})$ is a Banach algebra. More precisely, if $f,g\in\mathcal{A}^1_{\mu}(S)$, then $f\ast g\in\mathcal{A}^1_{\mu}(S)$ and $$\|f\ast g\|_\mu\leq \|f\|_\mu \|g\|_\mu.$$

\item If $0<\mu_0<\mu$ and $\boldsymbol{f}\in\mathcal{A}^N_{\mu_0}(S)$, then $\boldsymbol{f}\in\mathcal{A}^N_{\mu}(S)$ and $\|\boldsymbol{f}\|_{N,\mu}\rightarrow 0$ as $\mu\rightarrow+\infty$. In other words, $\mathcal{A}^N(S):=\bigcup_{\mu>0} \mathcal{A}^N_{\mu}(S)$ is a focusing space, see e.g., \cite[p. 14]{Costin}.
\end{enumerate}
\end{lema}

\begin{proof}Inequality in (1) is an immediate consequence of the definition. To prove (2) note that $f\ast g$ is clearly analytic on $S$ as long as $f$ and $g$ are analytic there. To establish the desired bound, we use that $R(\xi_1 \tau^{\a_1/s_1},\dots, \xi_d \tau^{\a_n/s_n})$ $=\tau R(\xxi)$, for all $\tau>0$, $|\xxi^{\aa}|\leq R(\xxi)$ (second inequality in (\ref{prop the segment})) and also the definition of $M_0$ to get \begin{align*}
	|(f\ast g)(\xxi,\boldsymbol{\eta})|\leq |\xxi^{\aa}|\frac{e^{\mu R(\xxi)}}{M_0^2}\|f\|_\mu \|g\|_\mu I(R(\xxi))\leq \frac{e^{\mu R(\xxi)}}{M_0(1+R(\xxi)^2)}\|f\|_\mu \|g\|_\mu.
	\end{align*} To prove (3), it is sufficient to do it for $N=1$. If $f\in\mathcal{A}^1_{\mu_0}(S)$ and $\mu_0<\mu$, then it follows from the definition that $\|f\|_{\mu}\leq \|f\|_{\mu_0}$. To show that $\|f\|_{\mu}\rightarrow0$ as $\mu\rightarrow+\infty$, let $\epsilon>0$. We can find $\rho>0$ small enough such that $|(1+R(\xxi)^2)f(\xxi,\boldsymbol{\eta})|\leq \epsilon/2M_0$ if $(\xxi,\boldsymbol{\eta})\in \overline{D}$, $D=D_\rho^n\times D_{R'^{1/\a_1'}}\times\cdots\times D_{R'^{1/\a_m'}}$,  since $f(\00,\boldsymbol{\eta})=0$. Then $$M_0 \sup_{(\xxi,\boldsymbol{\eta})\in D} |f(\xxi,\boldsymbol{\eta})|(1+R(\xxi)^2)e^{-\mu R(\xxi)}\leq \frac{\epsilon}{2}.$$ 	If $(\xxi,\boldsymbol{\eta})\in S\setminus D$, then $R(\xxi)\geq \rho'=\min_{1\leq j\leq n} \rho^{s_j/\a_j}>0$ and $$M_0 \sup_{(\xxi,\boldsymbol{\eta})\in  S\setminus D} |f(\xxi,\boldsymbol{\eta})|(1+R(\xxi)^2)e^{-\mu R(\xxi)}\leq e^{-\rho'(\mu-\mu_0)} \|f\|_{\mu_0}.$$ Taking a large $\mu$ such that $e^{-\rho'(\mu-\mu_0)} \|f\|_{\mu_0}<\epsilon/2$, we obtain  $\|f\|_{\mu}<\epsilon$. This proves the claim.	
\end{proof}

\begin{lema}\label{Borel and convolution}Let $P\in \mathcal{O}_b(D_r^{n}\times D_R^m,\C^N)$ be a map such that $P(\00,\ee)=\00$, for all $\ee\in D_R^m$. Then for any $0<\rho<r$ we have $$\|f\ast \mathcal{B}(P)\|_{N,\mu}\leq
C_{\mu,\rho}\|P\|_{\rho,R}\|f\|_{\mu},\quad f\in \mathcal{A}^1_{\mu}(S),$$ where $a:=\min_{1\leq j\leq n}\,s_j/\a_j$, $C_{\mu,\rho}:=3(\left(1-2/\mu^{a}\rho\right)^{-n}-1)$  and  $\mu>\max\{4\sqrt{2},(2/\rho)^{1/a}\}$. The same inequality is valid for $P\in \mathcal{O}_b(D_r^{n}\times D_R^m,\text{Mat}(N\times N,\C))$ and $\boldsymbol{f}\in \mathcal{A}^N_{\mu}(S)$.
\end{lema}

\begin{proof}Let us write $P(\xx,\ee)=\sum_{\bb\in\N^n\setminus\{\00\}} P_{\bb}(\ee) \xx^{\bb}$ as a convergent power series at the origin with coefficients in $\mathcal{O}_b(D_R^m,\C^N)$ and also  $\mathcal{B}(P)(\xxi,\boldsymbol{\eta})=\sum_{\bb\in\N^n\setminus\{\00\}} \frac{P_{\bb}(\boldsymbol{\eta})}{\Gamma(\left<\bb,\boldsymbol{\lambda}\right>)} \xxi^{\bb-\aa}$.
	
If $\bb\neq\00$, we have that \begin{align*}
\left|\xxi^{\bb-\aa}\ast f(\xxi,\boldsymbol{\eta})\right|\leq |\xxi^\bb|\frac{e^{\mu R(\xxi)}}{M_0}\|f\|_{\mu} \int_0^1   \frac{ \tau^{\left<\bb,\boldsymbol{\lambda}\right>-1}e^{-\mu R(\xxi)\tau}}{1+R(\xxi)^2(1-\tau)^2}d\tau.
\end{align*}To properly bound this integral expression we split it from $0$ to $1/2$ and from $1/2$ to $1$. In the first case, the integral is bounded by $$\int_{0}^{1/2}\frac{ \tau^{\left<\bb,\boldsymbol{\lambda}\right>-1}e^{-\mu R(\xxi)\tau}}{1+R(\xxi)^2/4}d\tau \leq  \frac{4}{1+R(\xxi)^2} \frac{\Gamma(\left<\bb,\boldsymbol{\lambda}\right>)}{(\mu R(\xxi))^{\left<\bb,\boldsymbol{\lambda}\right>}},$$ where we have used the integral representation of the Gamma function, which is possible since $\left<\bb,\boldsymbol{\lambda}\right>>0$. In the second case, the integral is bounded by \begin{align*}
\int_{1/2}^{1} \tau^{\left<\bb,\boldsymbol{\lambda}\right>-1}e^{-\mu R(\xxi)\tau}d\tau &\leq  e^{-\mu R(\xxi)/4} \int_{1/2}^{1} \tau^{\left<\bb,\boldsymbol{\lambda}\right>-1} e^{-\mu R(\xxi)\tau/2}d\tau\\
&\leq \left(\frac{2}{\mu R(\xxi)}\right)^{\left<\bb,\boldsymbol{\lambda}\right>} \Gamma(\left<\bb,\boldsymbol{\lambda}\right>)e^{-\mu R(\xxi)/4}\\
&\leq \left(\frac{2}{\mu R(\xxi)}\right)^{\left<\bb,\boldsymbol{\lambda}\right>} \frac{\Gamma(\left<\bb,\boldsymbol{\lambda}\right>)}{1+\mu^2/32 R(\xxi)^2}\leq \left(\frac{2}{\mu R(\xxi)}\right)^{\left<\bb,\boldsymbol{\lambda}\right>} \frac{\Gamma(\left<\bb,\boldsymbol{\lambda}\right>)}{1+R(\xxi)^2},
\end{align*} as long as $\mu^2/32>1$. Now, if $\mu>1$, by using the definition of $a$, $a|\bb|\leq \left<\bb,\boldsymbol{\lambda}\right>\leq |\bb|$, that  $|\xxi^\bb|\leq R(\xxi)^{\left<\bb,\boldsymbol{\lambda}\right>}$, and $4+2^{|\bb|}\leq 3\cdot 2^{|\bb|}$, for $|\bb|\geq1$, we can conclude that $$\left|\xxi^{\bb-\aa}\ast f(\xxi,\boldsymbol{\eta})\right|\leq  \frac{3\cdot 2^{|\bb|}\Gamma(\left<\bb,\boldsymbol{\lambda}\right>)}{\mu^{a|\bb|}}\frac{e^{\mu R(\xxi)}}{M_0(1+R(\xxi)^2)}\|f\|_{\mu}.$$

By Cauchy's inequalities, if $0<\rho<r$, then $|P_\bb(\boldsymbol{\eta})|\leq \rho^{-|\bb|} \|P\|_{\rho,R}$ for any $\boldsymbol{\eta}\in D_R^m$. Therefore
\begin{align*}
|f\ast \mathcal{B}(P)(\xxi,\boldsymbol{\eta})|&\leq \left(\sum_{\bb\neq \00} \frac{3\cdot 2^{|\bb|}\|P\|_{\rho,R}}{(\mu^a\rho)^{|\bb|}} \right) \frac{e^{\mu R(\xxi)}}{M_0(1+R(\xxi)^2)}\|f\|_{\mu}\\
&=\left(\left(1-2/\mu^{a}\rho\right)^{-n}-1\right)  \frac{3\|P\|_{\rho,R}e^{\mu R(\xxi)}}{M_0(1+R(\xxi)^2)}\|f\|_{\mu},
\end{align*} as long as $\mu^a>2/\rho$. This estimate allows us to conclude the proof.
\end{proof}

\begin{lema}\label{Diference and convolution} For all $N\in\N^+$, $\II=(i_1,\dots,i_N)\in\N^N$ and $\YY, \boldsymbol{h}\in\mathcal{A}^N_{\mu}(S)$, we have
$$\|(\YY+\boldsymbol{h})^{\ast \II}-\YY^{\ast \II}\|_{\mu}\leq |\II|  (\|\YY\|_{N,\mu}+\|\boldsymbol{h}\|_{N,\mu})^{|\II|-1} \|\boldsymbol{h}\|_{N,\mu}.$$
\end{lema}

The proof can be done by induction on $\II$, see e.g.,  \cite[p.19]{Costin}.

\textbf{4. The application of the fixed point theorem.} Let $M=M(S_{r,R'})>0$ such that $|(\xxi^\aa\boldsymbol{\eta}^{\aa'}I_N-A_0)^{-1}|< M$ on $S_{r,R'}$. Fix $0<\rho<r$ and using the continuity of $A(\00,\cdot)$ take  $0<\rho'\leq R'$ small enough such that $\|A(\00,\cdot)-A_0\|_{\rho,\rho'}<1/4M$.

We consider the operator $\mathcal{H}$ given by solving $\YY$ in equation (\ref{CE}). Remember that $K,\delta>0$ in (\ref{Gr AI}) are determined by $\boldsymbol{F}$ and are fixed. Let $\mu_0>0$ be such that $\mathcal{B}(c)\in \mathcal{A}^N_{\mu_0}(S_{\rho,\rho'})$ (it exists by Step 2.). If $\mu\geq \max\{\mu_0,4\sqrt{2},(2/\rho)^{1/a}\}$ and $\|\YY\|_{N,\mu}<1/\delta$, it follows using Lemmas \ref{Exp and convolucion}, \ref{Borel and convolution} and the fact that $\|\YY^{\ast\II}\|_{\mu}\leq \|\YY\|_{N,\mu}^{|\II|}$ (Lemma \ref{Exp and convolucion}, (2)) that 
\begin{align*}
M^{-1}\|\mathcal{H}(\boldsymbol{Y})\|_{N,\mu}\leq& \|\mathcal{B}(c)\|_{N,\mu}+\left(C_{\mu,\rho}\|A-A(\00,\cdot)\|_{\rho,\rho'} +\|A(\00,\cdot)-A_0\|_{\rho,\rho'}\right)\|\boldsymbol{Y}\|_{N,\mu}\\
&+ K\left(2C_{\mu,\rho}+1\right)\left(\left(1-\delta \|\boldsymbol{Y}\|_{N,\mu}\right)^{-N}-1\right).
\end{align*} We may conclude that $\mathcal{H}$ maps $\bigcup_{\mu\geq \mu_0} B_{\mu}(1/2\delta)$ to $\mathcal{A}^N(S_{\rho,\rho'})$, where $B_{\mu}(1/2\delta)$ is the ball $$B_{\mu}(1/2\delta)=\{ \YY\in \mathcal{A}^N_{\mu}(S_{\rho,\rho'}) \,|\, \|\YY\|_{N,\mu}\leq 1/2\delta \}.$$

Now choose $0<\epsilon<1/\delta$ such that $\left(1-\delta \epsilon\right)^{-N-1}-1<(2K\delta NM)^{-1}$. Using Lemma \ref{Diference and convolution} we can conclude that if $\|\YY\|_{N,\mu}+\|\boldsymbol{h}\|_{N,\mu}\leq \epsilon$, then \begin{align*}
M^{-1}\|\mathcal{H}(\YY+\boldsymbol{h})-\mathcal{H}(\YY)\|_{N,\mu}\leq&
\left(C_{\mu,\rho}\|A-A(\00,\cdot)\|_{\rho,\rho'} +\|A(\00,\cdot)-A_0\|_{\rho,\rho'}\right)\|\boldsymbol{h}\|_{N,\mu}\\
&K\delta N(2C_{\mu,\rho}+1)\left(\left(1-\delta (\|\boldsymbol{Y}\|_{N,\mu}+\|\boldsymbol{h}\|_{N,\mu})\right)^{-N-1}-1\right) \|\boldsymbol{h}\|_{N,\mu},\\
\leq&\left(C_{\mu,\rho}\|A-A(\00,\cdot)\|_{\rho,\rho'} +(4M)^{-1}+C_{\mu,\rho}M^{-1}+(2M)^{-1}\right)\|\boldsymbol{h}\|_{N,\mu}
\end{align*} 

\noindent where we have used the identity $\sum_{|\II|\geq 2} |\II|\tau^{|\II|-1}=N\left((1-\tau)^{-N-1}-1\right)$, valid for all $|\tau|<1$. In conclusion, we have obtained the inequality $$\|\mathcal{H}(\YY+\boldsymbol{h})-\mathcal{H}(\YY)\|_{N,\mu}\leq \left(C_{\mu,\rho}M\|A-A(\00,\cdot)\|_{\rho,\rho'} +C_{\mu,\rho}+\frac{3}{4}\right)\|\boldsymbol{h}\|_{N,\mu}.$$ Since $\rho$ has been fixed and $C_{\mu,\rho}\rightarrow 0$ as $\mu\rightarrow+\infty$, taking $\mu\geq \mu_0$ large enough, we conclude that $\mathcal{H}$ is eventually a contraction, say 
$$\|\mathcal{H}(\YY+\boldsymbol{h})-\mathcal{H}(\YY)\|_{N,\mu}\leq \frac{7}{8}\|\boldsymbol{h}\|_{N,\mu}.$$ If we also take $\mu$ large such that $\|\mathcal{B}(c)\|_{N,\mu}<\epsilon/8M$ (Lemma \ref{Exp and convolucion}, (3)), then $\|\mathcal{H}(\00)\|_{N,\mu}<\epsilon/8$ and the previous inequality shows that $\mathcal{H}$ maps the ball $B_{\mu}(\epsilon)$ to itself and being a contraction, it has a unique fixed point $\YY_0\in \mathcal{O}(S,\C^N)$.

Since $S$ contains a neighborhood of the origin and equation (\ref{CE}) has $\mathcal{B}(\widehat{\yy})$ as unique analytic solution there, then it coincides with the Taylor series expansion of $\YY_0$ at the origin. This means that $\mathcal{B}(\widehat{\yy})$ can be analytically continued to $S$ with exponential growth of order at most $(\a_1/s_1,\dots,\a_n/s_n,\00)$ there. Since this can be done for all $\theta$, up to $\theta_1,\dots,\theta_N$, we conclude that $\widehat{\yy}$ is $\xx^\aa\ee^{\aa'}$--$1$--summable. Thus the possible singular directions of $\widehat{\yy}$ for $\xx^\aa\ee^{\aa'}$--$1$--summability are determined by the equation $\det\big(\xxi^{\aa}\boldsymbol{\eta}^{\aa'}I_N-A_0\big)=0$.
\end{proof}

\begin{coro}Assuming the same hypotheses of the previous theorem, consider the vector field $X=\xx^{\aa}\left( \mu_1 x_1 \frac{\d}{\d x_1}+\cdots+ \mu_n x_n \frac{\d}{\d x_n}\right)$. If $b_1,\dots,b_{l-1}\in\C$, the system of singularly perturbed PDEs $$\ee^{l\aa'} X^l(\yy)+b_{l-1}\ee^{(l-1)\aa'} X^{l-1}(\yy)+\cdots+b_1 \ee^{\aa'} X(\yy)=\boldsymbol{G}(\boldsymbol{x},\boldsymbol{\varepsilon},\boldsymbol{y}),$$ has a unique formal solution $\widehat{\yy}\in\C[[\xx,\ee]]^N$ and it is $\xx^\aa\ee^{\aa'}$--$1$--summable.
\end{coro}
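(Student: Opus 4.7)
The plan is to reduce the higher-order scalar equation to a first-order system of the type already handled by Theorem \ref{Main result sum mon for PDE}. Introduce new unknowns $\yy_1=\yy$ and $\yy_{k+1}=\ee^{\aa'}X(\yy_k)$ for $k=1,\dots,l-1$, and collect them as $\YY=(\yy_1,\dots,\yy_l)\in\C^{lN}$. Then the corollary's equation is equivalent to
$$\ee^{\aa'}X(\YY)=\boldsymbol{F}(\xx,\ee,\YY),$$
with $\boldsymbol{F}_k(\xx,\ee,\YY)=\yy_{k+1}$ for $k=1,\dots,l-1$ and $\boldsymbol{F}_l(\xx,\ee,\YY)=\boldsymbol{G}(\xx,\ee,\yy_1)-\sum_{j=1}^{l-1}b_j\yy_{j+1}$, which has exactly the form of equation (\ref{PDEG}) in the $lN$-valued unknown $\YY$.

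Next I verify the hypotheses of Theorem \ref{Main result sum mon for PDE} for this system. Clearly $\boldsymbol{F}(\00,\00,\00)=\00$ since $\boldsymbol{G}(\00,\00,\00)=\00$. The Jacobian $\frac{\d \boldsymbol{F}}{\d \YY}(\00,\00,\00)$ is the block-companion matrix
$$M=\begin{pmatrix}0 & I_N & & & \\ & 0 & I_N & & \\ & & \ddots & \ddots & \\ & & & 0 & I_N \\ B_0 & -b_1 I_N & -b_2 I_N & \cdots & -b_{l-1}I_N\end{pmatrix}.$$
If $Mv=\00$ with $v=(v_1,\dots,v_l)$, then the first $l-1$ block rows force $v_2=\cdots=v_l=\00$ and the last gives $B_0 v_1=\00$; invertibility of $B_0$ therefore yields invertibility of $M$. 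Applying the main theorem, the reduced system has a unique formal solution $\widehat{\YY}\in\C[[\xx,\ee]]^{lN}$ which is $\xx^{\aa}\ee^{\aa'}$--$1$--summable, with possible singular directions cut out in the $(\xxi,\boldsymbol{\eta})$--Borel plane by $\det(\left<\boldsymbol{\mu},\aa\right>\xxi^{\aa}\boldsymbol{\eta}^{\aa'}I_{lN}-M)=0$.

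Finally I verify that formal solutions of the two problems correspond bijectively. Any formal solution $\widehat{\yy}$ of the original equation lifts to $\widehat{\YY}=(\widehat{\yy},\ee^{\aa'}X(\widehat{\yy}),\dots,\ee^{(l-1)\aa'}X^{l-1}(\widehat{\yy}))$ of the reduced system; conversely, any $\widehat{\YY}=(\widehat{\yy}_1,\dots,\widehat{\yy}_l)$ solving the reduced system automatically satisfies $\widehat{\yy}_{k+1}=\ee^{\aa'}X(\widehat{\yy}_k)$ by the first $l-1$ equations, so $\widehat{\yy}_1$ solves the original PDE. Uniqueness, existence, and $\xx^{\aa}\ee^{\aa'}$--$1$--summability therefore transfer to $\widehat{\yy}=\widehat{\yy}_1$. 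The only step requiring thought is the invertibility of the block-companion Jacobian, and this is the routine companion-matrix argument above; the rest is bookkeeping.
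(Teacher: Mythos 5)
Your proposal is correct and follows essentially the same route as the paper: rewrite the equation as a first-order system of size $lN$ in the unknowns $(\yy,\ee^{\aa'}X(\yy),\dots,\ee^{(l-1)\aa'}X^{l-1}(\yy))$, observe that the resulting Jacobian at the origin is a block-companion matrix which is invertible because $B_0$ is, and apply Theorem \ref{Main result sum mon for PDE}. The only cosmetic difference is that the paper first normalizes by dividing by $\left<\boldsymbol{\mu},\aa\right>^l$ so as to work with $X_{\boldsymbol{\lambda}}$ and reads off the singular directions from the eigenvalue equations $t^l+a_{l-1}t^{l-1}+\cdots+a_1t=\nu_j$, whereas you apply the theorem directly to $X$ and check invertibility by the kernel argument; both are equivalent.
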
	
	

\begin{proof}Dividing the given PDE by $\left<\boldsymbol{\mu},\aa\right>^l$, it is enough to prove the statement for the equation $$\ee^{l\aa'} X_{\boldsymbol{\lambda}}^l(\yy)+a_{l-1}\ee^{(l-1)\aa'} X_{\boldsymbol{\lambda}}^{l-1}(\yy)+\cdots+a_1 \ee^{\aa'} X_{\boldsymbol{\lambda}}(\yy)=\boldsymbol{F}(\boldsymbol{x},\boldsymbol{\varepsilon},\boldsymbol{y}),$$ where $a_j=\left<\boldsymbol{\mu},\aa\right>^{j-l} b_j$, $\boldsymbol{F}=\left<\boldsymbol{\mu},\aa\right>^{-l} \boldsymbol{G}$, and  $\boldsymbol{\lambda}=\left(\frac{s_1}{\a_1},\dots,\frac{s_n}{\a_n}\right)$ and $s_j/\a_j=\mu_j/\left<\boldsymbol{\mu},\aa\right>>0$ are as before. If we put $(\yy_0,\yy_1\dots,\yy_{l-1})=(\yy,\ee^{\aa'} X_{\boldsymbol{\lambda}}(\yy),\dots,\ee^{(l-1)\aa'} X_{\boldsymbol{\lambda}}^{l-1}(\yy))$, the result now follows by applying Theorem \ref{Main result sum mon for PDE} to the corresponding system of PDEs of size $lN$ given by $$\ee^{\aa'} X_{\boldsymbol{\lambda}}(\yy_0,\yy_1\dots,\yy_{l-1})=(\yy_1,\dots,\yy_{l-1},\boldsymbol{F}(\xx,\ee,\yy_0)-a_1\yy_0-\cdots-a_{l-1}\yy_{l-1}).$$
	
In this case, the possible singular directions in $t=\xxi^{\aa}\boldsymbol{\eta}^{\aa'}$ are determined by the arguments of the solutions of  $t^l+a_{l-1}t^{l-1}+\cdots+a_1 t=\nu_j$, $j=1,\dots,N$, where $\nu_1,\dots,\nu_N$ are the eigenvalues of $A_0=\frac{\partial \boldsymbol{F}}{\partial \yy}(\00,\00,\boldsymbol{0})$. Note that zero is not a solution of these polynomials since $A_0$ is invertible.	
\end{proof}

\bibliographystyle{plaindin_esp}

\end{document}